\newtheorem{theorem}{Theorem}[section]
\newtheorem{lemma}[theorem]{Lemma}
\newtheorem{corollary}[theorem]{Corollary}
\newtheorem{proposition}[theorem]{Proposition}
\newtheorem{conjecture}[theorem]{Conjecture}
\theoremstyle{definition}
\newtheorem{definition}[theorem]{Definition}
\newtheorem{question}[theorem]{Question}
\theoremstyle{remark}
\newtheorem{remark}[theorem]{Remark}
\numberwithin{equation}{section}
\DeclareMathOperator{\rank}{rk}
\DeclareMathOperator{\codim}{codim}
\DeclareMathOperator{\Hom}{Hom}
\DeclareMathOperator{\Coh}{Coh}
\DeclareMathOperator{\D}{D}
\DeclareMathOperator{\K}{K}
\DeclareMathOperator{\Supp}{Supp}
\DeclareMathOperator{\ch}{ch}
\DeclareMathOperator{\Ext}{Ext}
\begin{document}
\title{Tilt-stability, vanishing theorems and Bogomolov-Gieseker type inequalities}

\author{Hao Max Sun}

\address{Department of Mathematics, Shanghai Normal University, Shanghai 200234, People's Republic of China}

\email{hsun@shnu.edu.cn}


\subjclass[2000]{14F17, 14F05}

\date{August 2, 2016}

\keywords{Stable sheaf, Vanishing theorem, Bridgeland stability,
Bogomolov-Gieseker inequality, Chern character}

\begin{abstract}
We investigate the tilt-stability of stable sheaves on projective
varieties with respect to certain tilt-stability conditions depends
on two parameters constructed by Bridgeland \cite{Bri2} (see also
\cite{AB, BMT, BMS}). For a stable sheaf, we give effective bounds
of these parameters such that the stable sheaf is tilt-stable. These
allow us to prove new vanishing theorems for stable sheaves and an
effective Serre vanishing theorem for torsion free sheaves. Using
these results, we also prove Bogomolov-Gieseker type inequalities
for the third Chern character of a stable sheaf on $\mathbb{P}^3$.
\end{abstract}

\maketitle
\section{Introduction}
Let $X$ be a complex smooth projective variety of dimension $n$ with
a fixed ample divisor $H$ and a fixed $\mathbb{Q}$-divisor $B$ on
it. For any real numbers $\alpha>0$ and $\beta$, the
$\mathbb{R}$-divisors $\alpha H$ and $\beta H+B$ determine a weak
Bridgeland stability condition on $X$ (see Section \ref{S2.2} for
the precise definition). We also call it
$\nu_{\alpha,\beta}$-stability (or tilt-stability). In recent years,
this stability has drawn a lot of attentions, and has been
investigated intensively.

When $X$ is a surface, $\nu_{\alpha,\beta}$-stability is a
Bridgeland stability condition introduced by Bridgeland \cite{Bri1,
Bri2}, Arcara and Bertram \cite{AB}. There are many and fruitful
applications of this stability to birational geometry of moduli
spaces of stable sheaves on surfaces (cf. \cite{AB}, \cite{ABCH},
\cite{BM1}, \cite{BM2}, \cite{Bo}, \cite{MYY}, \cite{Nu} \cite{LZ},
$\cdots$). For higher dimensional $X$,
$\nu_{\alpha,\beta}$-stability appears in the construction of
Bridgeland stability on $X$ by Bayer, Macr\`i and Toda \cite{BMT},
and it has been systematically investigated by Bayer, Macr\`i and
Stellari \cite{BMS}.

The prototypical example of a $\nu_{\alpha,\beta}$-stability result
is Bridgeland's large volume limit theorem \cite{Bri2, Liu}:
\begin{theorem}[Bridgeland]
Suppose that $\dim X=2$. For $E\in \Coh^{\beta H+B}(X)\cap\Coh(X)$
and $\alpha\gg0$, we have $E$ is $\nu_{\alpha, \beta}$-(semi)stable
if and only if $E$ is $(H,\beta H+B-\frac{1}{2}K_X)$-twisted
Gieseker (semi)stable.
\end{theorem}
A parallel result also holds for higher dimensional case. In that
case, the large volume limit $(\alpha\gg0)$ of
$\nu_{\alpha,\beta}$-stability for a coherent sheaf is the same as
the $p_{H, \beta H+B}$-stability (see Proposition \ref{prop1}).

Bridgeland's arguments are non-effective, and there is no known
bound on how large $\alpha$ must be in order to obtain the
conclusion of the theorem. In light of this theorem, it is natural
to ask:
\begin{question}
For which finite value of $\alpha$ and $\beta$ does $p_{H, \beta
H+B}$-stability become $\nu_{\alpha,\beta}$-stability for a coherent
sheaf?
\end{question}

The goal of this paper is to answer this question for a
$\mu_{H,B}$-stable torsion free sheaf $E$ (see Definition
\ref{def2.1}). In order to investigate the
$\nu_{\alpha,\beta}$-stability of $E$, all the potential
$\nu_{\alpha,\beta}$-destabilizing subsheaves of $E$ should be
considered. We need to define $$\mu^{\max}_{H,
B}(E)=\max\Big\{\mu_{H,B}(F): F~\mbox{is a subsheaf of}~E,
\mu_{H,B}(F)\neq\mu_{H,B}(E)\Big\}.$$ If $F$ is a subsheaf of $E$
with $\mu_{H,B}(F)=\mu_{H,B}(E)$, since $E$ is $\mu_{H,B}$-stable,
one sees that $\rank F=\rank E$. This implies that $E/F$ is a
torsion sheaf, and the codimension of the support of $E/F$ is $\geq$
2. It follows that $\nu_{\alpha,\beta}(F)\leq\nu_{\alpha,\beta}(E)$,
namely that such an $F$ can not $\nu_{\alpha,\beta}$-destabilize
$E$. Our main result is the following.

\begin{theorem}[=Theorem \ref{thm1}]\label{main1}
Suppose that $E$ is a $\mu_{H, B}$-stable torsion free sheaf on $X$,
and $\mu$ is a rational number satisfies $\mu^{\max}_{H,
B}(E)\leq\mu<\mu_{H,B}(E)$. Let
$\beta_0=\mu_{H,B}(E)-\frac{\overline{\Delta}_H^B(E)/(H^n\rank
E)^2}{\mu_{H,B}(E)-\mu}$ and
$\beta_1=\mu_{H,B}(E)-\frac{\sqrt{(\rank
E+1)\overline{\Delta}_H^B(E)}}{H^n\rank E}$.
\begin{enumerate}
\item If $\mu>\mu_{H, B}(E)-\frac{1}{H^n\rank
E}\sqrt{\frac{\overline{\Delta}_H^B(E)}{\rank E+1}}$, then $E$ is
$\nu_{\alpha, \beta}$-stable for any $\alpha>0$ and
$\beta\leq\beta_0$.

\item If $\mu\leq\mu_{H, B}(E)-\frac{1}{H^n\rank
E}\sqrt{\frac{\overline{\Delta}_H^B(E)}{\rank E+1}}$ and
$\overline{\Delta}_H^B(E)>0$, then $E$ is $\nu_{\alpha,
\beta_1}$-stable for any $\alpha>0$.

\item If $\overline{\Delta}_H^B(E)=0$, then $E$ is $\nu_{\alpha,
\beta}$-stable for any $\alpha>0$ and $\beta<\mu_{H,B}(E)$.
\end{enumerate}
\end{theorem}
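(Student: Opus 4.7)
My plan is a contradiction argument using the Bogomolov--Gieseker inequality as the main numerical tool. In each of the three cases we have $\beta<\mu_{H,B}(E)$, so the $\mu_{H,B}$-stable torsion-free sheaf $E$ lies in $\mathcal{T}^\beta\subset\mathcal{A}^\beta$. Suppose for contradiction that $E$ is not $\nu_{\alpha,\beta}$-stable, and pick a destabilizing short exact sequence $0\to F\to E\to G\to 0$ in $\mathcal{A}^\beta$ with $F$ $\nu_{\alpha,\beta}$-semistable and $\nu_{\alpha,\beta}(F)\geq \nu_{\alpha,\beta}(E)$. The long exact cohomology sequence in $\Coh(X)$ together with $H^{-1}(E)=0$ forces $H^{-1}(F)=0$, so $F$ is a coherent subsheaf of $E$ that belongs to $\mathcal{T}^\beta$; the hypothesis $\mu^{\max}_{H,B}(E)\leq\mu$ then forces $\mu_{H,B}(F)\leq\mu$.

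Next I would extract a numerical inequality from Bogomolov--Gieseker. Since $F$ is $\nu_{\alpha,\beta}$-semistable, BG gives $\overline{\Delta}_H^B(F)\geq 0$, and the same holds for each $\mu_{H,B}$-HN factor of the sheaf quotient $Q=E/F$. Expanding the destabilizing inequality $\nu_{\alpha,\beta}(F)\geq\nu_{\alpha,\beta}(E)$ in Chern characters, eliminating $\alpha$ at the extremum, and combining with the Hodge-type identity relating $\overline{\Delta}_H^B(E)$, $\overline{\Delta}_H^B(F)$ and $\overline{\Delta}_H^B(Q)$ across the exact sequence, one derives a numerical inequality that bounds the product $(\mu_{H,B}(E)-\beta)(\mu_{H,B}(E)-\mu_{H,B}(F))$ from above in terms of $\overline{\Delta}_H^B(E)/(H^n\rank E)^2$ and a rational function of $\rank F$ and $\rank Q$.

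The final step is to optimize over admissible $(\rank F,\mu_{H,B}(F))$ and split into the three cases. In case (1), substituting $\mu_{H,B}(F)\leq\mu$ directly into the numerical inequality isolates $(\mu_{H,B}(E)-\beta)$ and yields $\beta>\beta_0$, contradicting $\beta\leq\beta_0$. In case (2), one instead applies AM--GM to $(\mu_{H,B}(E)-\beta)(\mu_{H,B}(E)-\mu_{H,B}(F))$; the combinatorial optimum over $1\leq\rank F\leq\rank E-1$ is attained at $\rank F=1$ or $\rank E-1$, producing the constant $\rank E+1$ appearing in $\beta_1$ and forcing $\beta>\beta_1$, with a standard Jordan--H\"older plus continuity argument promoting semistability at the critical wall $\beta=\beta_1$ to stability. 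Case (3) is immediate: $\overline{\Delta}_H^B(E)=0$ collapses the right-hand side to zero, while the left-hand side is strictly positive because $F$ is a proper subsheaf of a $\mu_{H,B}$-stable sheaf (so $\mu_{H,B}(F)<\mu_{H,B}(E)$) and $\beta<\mu_{H,B}(E)$.

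The main obstacle is deriving the precise numerical inequality in the second step: this requires careful Chern-character manipulations combining BG on $F$ and $Q$ with the Hodge-type decomposition of $\overline{\Delta}_H^B(E)$, and the rank bookkeeping must produce exactly the constants $(H^n\rank E)^2$ and $\rank E+1$ appearing in $\beta_0$ and $\beta_1$. A secondary subtlety is that, a priori, the destabilizing subobject $F\in\mathcal{A}^\beta$ could have the quotient $G$ carry a nonzero $H^{-1}$; this is absorbed into the subsheaf analysis by replacing $F$ with its image in $E$, but one must verify that this replacement preserves the direction of the slope inequality.
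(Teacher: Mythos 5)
The decisive gap is in your first step. From the long exact sequence of cohomology sheaves and $\mathcal{H}^{-1}(E)=0$ you do conclude that the destabilizer $F$ is a sheaf, but \emph{not} that it is a subsheaf of $E$: the kernel of the sheaf map $F\to E$ is $\mathcal{H}^{-1}(G)$, which can be nonzero, so $\rank F$ may exceed $\rank E$ by an arbitrary amount and $\mu_{H,B}(F)$ is not controlled by $\mu^{\max}_{H,B}(E)$. You flag this at the end as a ``secondary subtlety'' to be fixed by passing to the image $F/\mathcal{H}^{-1}(G)\subset E$, but that replacement does not preserve the tilt-slope inequality: $\nu_{\alpha,\beta}$ depends on $\ch_2$, which changes uncontrollably under this quotient, so the verification you defer is precisely the step that fails. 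This is where the paper introduces its key device, the extremal ellipse $C_E$ (Lemma \ref{lemma4.1}): applying $\overline{\Delta}_H^B(F)\ge 0$ to the $\nu_{\alpha,\beta}$-semistable destabilizer $F$ itself, together with $0<H^{n-1}\ch_1^{\beta H+B}(F)\le H^{n-1}\ch_1^{\beta H+B}(E)$, one shows that for $(\beta,\alpha)$ outside $C_E$ one has $\rank F\le\rank E$; only then, in the regime $\mu_{H,B}(F)>\mu$, does the comparison $\mu_{H,\beta H+B}(\mathcal{H}^{-1}(G))\le 0<\mu_{H,\beta H+B}(F)$ force $\mathcal{H}^{-1}(G)=0$ (otherwise $F/\mathcal{H}^{-1}(G)$ would be a subsheaf of $E$ of rank $<\rank E$ and slope $>\mu\ge\mu^{\max}_{H,B}(E)$, a contradiction). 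Your outline has no substitute for this rank bound, and without it the case $\mathcal{H}^{-1}(G)\ne 0$ is not excluded.

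Relatedly, your proposed source of the constant $\rank E+1$ is not the actual one. Once the destabilizer's wall is replaced by its discriminant-free modification, its leftmost point is $\mu_{H,B}(E)-\frac{\overline{\Delta}_H^B(E)/(H^n\rank E)^2}{\mu_{H,B}(E)-\mu_{H,B}(F)}$, which does not depend on $\rank F$ at all, so there is no AM--GM optimization over $1\le\rank F\le\rank E-1$ to perform. The number $\rank E+1$ is the integrality slack $\rank F<\rank E+1\iff\rank F\le\rank E$ built into the ellipse $C_E$, whose leftmost point is exactly $(\beta_1,0)$; that is, it is tied to controlling the non-subsheaf destabilizers above, not to optimizing over honest subsheaves. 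Two smaller points: your dichotomy omits subsheaves with $\mu_{H,B}(F)=\mu_{H,B}(E)$ (full rank, quotient supported in codimension $\ge 2$), which are excluded from $\mu^{\max}_{H,B}(E)$ by definition and satisfy $\nu_{\alpha,\beta}(F)\le\nu_{\alpha,\beta}(E)$ with possible equality --- this is why the paper first proves semistability and needs a separate argument (its Step 2) to upgrade to stability; and in case (3) the same subsheaves show that the claim ``$\mu_{H,B}(F)<\mu_{H,B}(E)$ for every proper subsheaf'' is false as stated.
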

Generally it is difficult to compute $\mu^{\max}_{H, B}(E)$. That
prevents the applications of the theorem. But one can obtain some
upper bounds for $\mu^{\max}_{H, B}(E)$ explicitly. Taking $\mu$ to
be those upper bounds can make the theorem to be conveniently
applied.

Considering the $\nu_{\alpha,\beta}$-stability of $E[1]$, we have
the following result dual to Theorem \ref{main1}.

\begin{theorem}[=Theorem \ref{thm2}]\label{main2}
Suppose that $E$ is a $\mu_{H,B}$-stable reflexive sheaf on $X$, and
$\bar{\mu}$ is a rational number satisfies $\mu_{H,
B}(E)<\bar{\mu}\leq\mu^{\min}_{H,B}(E)$. Let
$\overline{\beta}_0=\mu_{H,B}(E)+\frac{\overline{\Delta}_H^B(E)/(H^n\rank
E)^2}{\bar{\mu}-\mu_{H, B}(E)}$ and
$\overline{\beta}_1=\mu_{H,B}(E)+\frac{\sqrt{(\rank
E+1)\overline{\Delta}_H^B(E)}}{H^n\rank E}$.
\begin{enumerate}
\item If $\bar{\mu}<\mu_{H, B}(E)+\frac{1}{H^n\rank
E}\sqrt{\frac{\overline{\Delta}_H^B(E)}{\rank E+1}}$, then $E[1]$ is
$\nu_{\alpha, \beta}$-stable for any $\alpha>0$ and
$\beta\geq\overline{\beta}_0$.

\item If $\bar{\mu}\geq\mu_{H, B}(E)+\frac{1}{H^n\rank
E}\sqrt{\frac{\overline{\Delta}_H^B(E)}{\rank E+1}}$, then $E[1]$ is
$\nu_{\alpha, \overline{\beta}_1}$-stable for any $\alpha>0$.

\item If $\overline{\Delta}_H^B(E)=0$, then $E[1]$ is
$\nu_{\alpha,\beta}$-stable for any $\alpha>0$ and $\beta\geq\mu_{H,
B}(E)$.
\end{enumerate}
\end{theorem}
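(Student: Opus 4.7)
The plan is to deduce Theorem \ref{main2} from Theorem \ref{main1} via a derived duality argument. Since $E$ is reflexive, its dual $E^\vee=\mathcal{H}om(E,\mathcal{O}_X)$ is again a reflexive torsion-free sheaf of the same rank. A direct Chern-character computation, upon replacing the twist $B$ by $-B$, shows
\begin{align*}
\mu_{H,-B}(E^\vee) &= -\mu_{H,B}(E), \qquad \overline{\Delta}_H^{-B}(E^\vee) = \overline{\Delta}_H^{B}(E),
\end{align*}
and dualizing the Harder-Narasimhan filtration gives $\mu^{\max}_{H,-B}(E^\vee)=-\mu^{\min}_{H,B}(E)$. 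Consequently the hypothesis $\mu_{H,B}(E)<\bar{\mu}\leq\mu^{\min}_{H,B}(E)$ translates into $\mu^{\max}_{H,-B}(E^\vee)\leq-\bar{\mu}<\mu_{H,-B}(E^\vee)$, which is exactly the hypothesis of Theorem \ref{main1} for $E^\vee$ with base divisor $-B$ and slope parameter $-\bar{\mu}$.

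Applying Theorem \ref{main1} produces $\nu_{\alpha,\beta'}$-stability of $E^\vee$ in the heart $\mathcal{B}^{\beta'}$ built from $-B$, in the corresponding range of $\beta'$ with critical values $-\overline{\beta}_0$ and $-\overline{\beta}_1$. To transfer this back to $E[1]$, consider the derived duality $\mathbb{D}(\cdot):=R\mathcal{H}om(\cdot,\mathcal{O}_X)[1]$. Reflexivity ensures $\mathbb{D}(E^\vee)\cong E[1]$ (the higher $\mathcal{E}xt^i$ for $i\geq 2$ are supported in codimension $\geq 3$, hence irrelevant for slopes). Moreover $\mathbb{D}$ restricts to an anti-equivalence between the heart $\mathcal{B}^{\beta'}$ for $-B$ and the heart $\mathcal{B}^{-\beta'}$ for $B$: the torsion pair swaps under duality of the HN-filtration, and the shift $[1]$ in the definition of $\mathbb{D}$ exactly compensates the shift picked up when exchanging $\mathcal{T}$ and $\mathcal{F}$. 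Setting $\beta=-\beta'$ and noting that this anti-equivalence negates $\nu_{\alpha,\beta'}$ while reversing the direction of subobject inclusion, the $\nu_{\alpha,\beta'}$-stability of $E^\vee$ becomes the $\nu_{\alpha,\beta}$-stability of $E[1]$ for $\beta\geq\overline{\beta}_0$ (respectively at $\beta=\overline{\beta}_1$). Part (3) follows identically from the $\overline{\Delta}=0$ case of Theorem \ref{main1}.

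The main technical obstacle is the careful bookkeeping of the derived duality: one must verify that $\mathbb{D}$ takes the tilted heart built from $-B$ to the tilted heart built from $B$ with the correct shift, and that the slope function $\nu_{\alpha,\beta'}$ transforms into $\nu_{\alpha,-\beta'}$ in a way compatible with the anti-equivalence. The reflexivity of $E$ is essential here to keep $\mathbb{D}(E^\vee)$ concentrated in a single cohomological degree up to codimension $\geq 3$, so that slope comparisons are unaffected by the higher $\mathcal{E}xt$ corrections. A direct alternative would mirror the argument of Theorem \ref{main1}: take a putative destabilizer $0\to F\to E[1]\to Q\to 0$ in $\mathcal{B}^\beta$, extract from the long exact cohomology the exact sequence $0\to\mathcal{H}^{-1}(F)\to E\to\mathcal{H}^{-1}(Q)\to\mathcal{H}^0(F)\to 0$, use the $\mu^{\min}$-hypothesis on $E$ to bound the slope of $\mathcal{H}^{-1}(Q)$, and apply the Bogomolov-Gieseker inequality to reach a quadratic obstruction in $(\alpha,\beta)$ that rules out destabilization once $\beta\geq\overline{\beta}_0$ (respectively at $\beta=\overline{\beta}_1$).
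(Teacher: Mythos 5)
Your main route (dualize, apply Theorem \ref{main1} to $E^\vee$, transfer back through $\mathbb{D}=R\mathcal{H}om(-,\mathcal{O}_X)[1]$) has a genuine gap at the step you yourself flag as ``bookkeeping'': the asserted anti-equivalence $\mathbb{D}\colon\Coh^{-\beta H-B}(X)\to\Coh^{\beta H+B}(X)$ is false as stated. A torsion sheaf $T$ supported in codimension $2$ lies in the tilted heart (it has $\mu^{-}=+\infty$), but $R\mathcal{H}om(T,\mathcal{O}_X)[1]$ is concentrated in degree $1$, so it leaves the heart; and for $n\geq 4$ even $\mathbb{D}(E^\vee)$ acquires nonzero cohomology sheaves $\mathcal{E}xt^{i}(E^\vee,\mathcal{O}_X)$ ($i\geq 2$) in positive degrees, so $\mathbb{D}(E^\vee)\not\cong E[1]$ and is not an object of $\Coh^{\beta H+B}(X)$ at all. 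These codimension-$\geq 2$ objects are not a harmless correction: they have $\nu_{\alpha,\beta}=+\infty$, so a single such subobject of $E[1]$ destabilizes it, and this is precisely the phenomenon the theorem must rule out. A second gap: $\mu^{\min}_{H,B}(E)$ in this paper is \emph{not} the minimal Harder--Narasimhan slope; it is defined as a minimum over torsion-free quotients \emph{and} over the auxiliary class $\mathcal{T}$ of torsion-free over-sheaves of $E$ with torsion cokernel. So ``dualizing the HN filtration'' does not yield $\mu^{\max}_{H,-B}(E^\vee)=-\mu^{\min}_{H,B}(E)$; relating subsheaves of $E^\vee$ to elements of $\mathcal{Q}\cup\mathcal{T}$ requires its own argument (dualizing $0\to F\to E^\vee\to Q\to 0$ produces $\mathcal{E}xt^1$ correction terms).

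The paper instead runs the proof of Theorem \ref{thm1} in mirror image, using Lemma \ref{lemma4.2} (the quotient version of the extremal-ellipse bound) and Lemma \ref{lemma4.5} for Type 3 walls. The only genuinely new point is the one your proposal does not engage with: after the wall analysis one is left with a potential subobject $0\to T\to E[1]\to Q[1]\to 0$ where $T$ is torsion with $\codim\Supp(T)\geq 2$, hence $\nu_{\alpha,\beta}(T)=+\infty$. Reflexivity of $E$ is used exactly here, via local cohomology and a depth estimate ($H^1_x(E_x)\neq 0$ would force $\depth(E_x)\leq 1$, contradicting \cite[Proposition 1.3]{Har}) --- not to control the higher $\mathcal{E}xt$ of a derived dual. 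Your closing one-sentence ``direct alternative'' is the right skeleton, but as written it also misses this torsion-subobject obstruction and the role reflexivity plays in removing it.
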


Theorem \ref{main1}(3) and \ref{main2}(3) are well known (see
\cite[Lemma 6.28]{MS}), but we still state them in order to keep
this note self-complete. The above theorems can help us to
understand the $\nu_{\alpha,\beta}$-stability more explicitly. They
also give some interesting applications to the positivity of
coherent sheaves, such as vanishing theorems of stable sheaves,
effective Serre vanishing theorem, and Bogomolov-Gieseker type
inequalities for the third Chern character of a stable sheaf on
$\mathbb{P}^3$.

The slope $\mu^{\min}_{H,B}(E)$ in Theorem \ref{main2} is defined in
Section \ref{S5}. The strategy of the proof is essentially the same
as that of \cite{Sun}.

We now explain the strategy in greater detail. Given a
$\mu_{H,B}$-stable sheaf $E$, we define an ellipse $C_E$ on the
$(\beta,\alpha)$ half plane. By the Bogomolov-Gieseker type
inequality in \cite{BMT} and \cite{BMS}, we can show that for a
point $(\beta,\alpha)$ outside the ellipse $C_E$, if a subobject $F$
of $E$ has large $\nu_{\alpha,\beta}$-slope, then it must have small
rank (Lemma \ref{lemma4.1} and Lemma \ref{lemma4.2}). When $F$ has
small rank, by the $\mu_{H,B}$-stability of $E$ and the
Bogomolov-Gieseker type inequality, $\nu_{\alpha,\beta}(F)$ can be
bounded above. Hence we could obtain the
$\nu_{\alpha,\beta}$-stability of $E$ by computing the intersection
of the ellipse and the wall $W(F,E)$.

\subsection*{Applications to vanishing theorems}
By the basic properties of $\nu_{\alpha,\beta}$-stability, Theorem
\ref{main1} and \ref{main2} can immediately give the following
vanishing theorems for $\mu_{H, B}$-stable sheaves.

\begin{corollary}\label{main3}
Let $E$ be a $\mu_{H}$-stable torsion free sheaf on $X$, and $\mu$
be a rational number satisfies
$\mu^{\max}_{H}(E)\leq\mu<\mu_{H}(E)$.
\begin{enumerate}
\item If $\mu>\mu_{H}(E)-\frac{1}{H^n\rank
E}\sqrt{\frac{\overline{\Delta}_H(E)}{\rank E+1}}$, then $H^{n-1}(X,
E(K_X+lH))=0$ for any integer
$l>\frac{\overline{\Delta}_H(E)/(H^n\rank
E)^2}{\mu_{H}(E)-\mu}-\mu_{H}(E)$.

\item If $\mu\leq\mu_{H}(E)-\frac{1}{H^n\rank
E}\sqrt{\frac{\overline{\Delta}_H(E)}{\rank E+1}}$, then $H^{n-1}(X,
E(K_X+lH))=0$ for any integer $l>\frac{\sqrt{(\rank
E+1)\overline{\Delta}_H(E)}}{H^n\rank E}-\mu_{H}(E)$.
\end{enumerate}
\end{corollary}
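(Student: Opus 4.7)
The plan is to reduce the cohomology vanishing to a Hom-vanishing in $D^{b}(X)$ via Serre duality, and then to obtain the latter from the tilt-stability of $E$ provided by Theorem~\ref{main1}, combined with the tilt-stability of a shifted line bundle coming from Theorem~\ref{main2}(3). By Serre duality,
\[
H^{n-1}\bigl(X,\,E(K_X + lH)\bigr) \;\cong\; \Ext^{1}\bigl(E,\,\mathcal{O}_X(-lH)\bigr)^{*} \;=\; \Hom_{D^{b}(X)}\!\bigl(E,\,\mathcal{O}_X(-lH)[1]\bigr)^{*},
\]
so it is enough to show $\Hom\bigl(E,\,\mathcal{O}_X(-lH)[1]\bigr) = 0$.

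Write $\beta_{*}$ for $\beta_{0}$ in case~(1) and for $\beta_{1}$ in case~(2). Applying Theorem~\ref{main1} with $B = 0$, the sheaf $E$ is $\nu_{\alpha,\beta_{*}}$-stable for every $\alpha > 0$. Since $\mathcal{O}_X(-lH)$ has rank one, its discriminant $\overline{\Delta}_H$ vanishes and it has $\mu_H$-slope $-l$; Theorem~\ref{main2}(3) therefore gives that $\mathcal{O}_X(-lH)[1]$ is $\nu_{\alpha,\beta}$-stable for every $\alpha > 0$ and every $\beta \geq -l$. A direct rearrangement shows that the arithmetic hypothesis on $l$ imposed in each case of the corollary is precisely the strict inequality $-l < \beta_{*}$, so that both $E$ and $\mathcal{O}_X(-lH)[1]$ lie in the tilted heart at $\beta = \beta_{*}$ and both are $\nu_{\alpha,\beta_{*}}$-stable there.

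To conclude I would exhibit some $\alpha > 0$ at which
\[
\nu_{\alpha,\beta_{*}}(E) \;>\; \nu_{\alpha,\beta_{*}}\!\bigl(\mathcal{O}_X(-lH)[1]\bigr);
\]
stability of the two objects in a common tilted heart then forces $\Hom(E,\mathcal{O}_X(-lH)[1]) = 0$, and Serre duality completes the argument. The right-hand slope is the elementary expression $\tfrac{\alpha^{2} - (l + \beta_{*})^{2}}{2(l + \beta_{*})}$, while the left-hand slope can be controlled from above by the Bogomolov--Gieseker inequality applied to $E$. The principal obstacle lies in this slope comparison: one must check that the strict inequality $-l < \beta_{*}$ really does propagate to a strict ordering of the two tilt-slopes for at least one admissible $\alpha$. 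The expected picture is that the numerical wall $W\bigl(E,\mathcal{O}_X(-lH)\bigr)$ meets the vertical line $\beta = \beta_{*}$ at a unique $\alpha_{0} > 0$, with the desired slope ordering holding on $0 < \alpha < \alpha_{0}$; the explicit definitions of $\beta_{0}$ and $\beta_{1}$ in Theorem~\ref{main1}, which encode the discriminant of $E$ via $\overline{\Delta}_H$, are tailored exactly so that this picture is realised.
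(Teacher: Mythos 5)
Your proposal is the paper's own argument: the paper routes the corollary through Lemma \ref{lemma5.1}, which is exactly your combination of Serre duality, Theorem \ref{main1} applied to $E$ along $\beta=\beta_*$, Theorem \ref{main2}(3) applied to $\mathcal{O}_X(-lH)[1]$, and the observation that the numerical hypothesis on $l$ is the inequality $-l<\beta_*$.

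The one step you leave open --- producing some $\alpha>0$ with $\nu_{\alpha,\beta_*}(E)>\nu_{\alpha,\beta_*}\big(\mathcal{O}_X(-lH)[1]\big)$ --- does close, and the paper settles it with the wall machinery of Section \ref{S3} rather than by comparing the two slope functions directly. Since $\overline{\Delta}_H(\mathcal{O}_X(-lH))=0$, the numerical wall $W\big(E,\mathcal{O}_X(-lH)[1]\big)$ is a Type 2 semicircle $(\beta-s)^2+\alpha^2=r^2$ whose left endpoint on the $\beta$-axis is exactly $s-r=-l$; writing $D=\overline{\Delta}_H(E)/(H^n\rank E)^2$ and $m=\mu_H(E)+l>0$, formulas (\ref{s}) and (\ref{r}) give $s+r=\mu_H(E)-D/m$. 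One point you should not gloss over: the inequality $-l<\beta_*$ only places $(\beta_*,0)$ to the right of the wall's \emph{left} endpoint; to put it \emph{inside} the semicircle you must also check $\beta_*<s+r$. This does hold: in case (1) the hypothesis on $l$ gives $m>D/(\mu_H(E)-\mu)$ and the hypothesis on $\mu$ gives $D>(\mu_H(E)-\mu)^2$, whence $s+r>\mu>\beta_0$; in case (2), $m>\sqrt{(\rank E+1)D}$ gives $s+r>\mu_H(E)-\sqrt{D/(\rank E+1)}\geq\beta_1$. Therefore $(\beta_*,\alpha)$ lies inside the wall for all sufficiently small $\alpha>0$, and the last item of Proposition \ref{wall} (with $\mu_H(\mathbf{v})>\beta_*>\mu_H(\mathbf{w})=-l$, noting $\nu_{\alpha,\beta}(\mathcal{O}_X(-lH)[1])=\nu_{\alpha,\beta}(\mathcal{O}_X(-lH))$) yields the strict slope inequality there; stability of both objects in $\Coh^{\beta_* H}(X)$ then kills the Hom. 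So your ``expected picture'' is correct, but a complete write-up needs the verification of $\beta_*<s+r$ just described.
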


\begin{corollary}\label{main4}
Let $E$ be a $\mu_{H}$-stable reflexive sheaf on $X$, and
$\bar{\mu}$ be a rational number satisfies
$\mu_{H}(E)<\bar{\mu}\leq\mu^{\min}_{H}(E)$.
\begin{enumerate}
\item If $\bar{\mu}<\mu_{H}(E)+\frac{1}{H^n\rank
E}\sqrt{\frac{\overline{\Delta}_H(E)}{\rank E+1}}$, then $H^1(X,
E(-lH))=0$ for any integer
$l>\mu_{H}(E)+\frac{\overline{\Delta}_H(E)/(H^n\rank
E)^2}{\bar{\mu}-\mu_{H}(E)}$.

\item If $\bar{\mu}\geq\mu_{H}(E)+\frac{1}{H^n\rank
E}\sqrt{\frac{\overline{\Delta}_H(E)}{\rank E+1}}$, then $H^1(X,
E(-lH))=0$ for any integer $l>\mu_{H}(E)+\frac{\sqrt{(\rank
E+1)\overline{\Delta}_H(E)}}{H^n\rank E}$.
\end{enumerate}
\end{corollary}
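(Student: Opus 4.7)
The plan is to deduce the $H^{1}$-vanishing from the tilt-stability of $E[1]$ supplied by Theorem \ref{main2}. The first move is to translate the cohomology vanishing into a $\Hom$-vanishing in the derived category:
\[
H^{1}(X,E(-lH)) \;\cong\; \Ext^{1}_{X}\bigl(\mathcal{O}_{X}(lH),\,E\bigr) \;\cong\; \Hom_{\D^{b}(X)}\bigl(\mathcal{O}_{X}(lH),\,E[1]\bigr),
\]
so the task becomes to force this $\Hom$ to vanish for the range of $l$ stated in the corollary.

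Next, I would apply Theorem \ref{main2} with $B=0$, choosing $\beta = \overline{\beta}_{0}$ in case (1) and $\beta = \overline{\beta}_{1}$ in case (2). The theorem delivers $\nu_{\alpha,\beta}$-stability of $E[1]$ for every $\alpha > 0$. In both cases the chosen $\beta$ satisfies $\beta > \mu_{H}(E)$, so the reflexive (hence torsion-free) sheaf $E$ lies in the torsion-free piece $\mathcal{F}^{\beta}$ of the tilting torsion pair, whence $E[1] \in \mathcal{A}^{\beta}$. On the other side, for any integer $l > \beta$ the line bundle $\mathcal{O}_{X}(lH)$ has slope $l$ exceeding $\beta$, so $\mathcal{O}_{X}(lH) \in \mathcal{T}^{\beta} \subset \mathcal{A}^{\beta}$, and line bundles are $\nu_{\alpha,\beta}$-stable for every $\alpha > 0$ by a standard fact.

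The final step is a slope comparison inside $\mathcal{A}^{\beta}$. Using the explicit twisted Chern characters of $\mathcal{O}_{X}(lH)$ and of $E$, a direct computation shows that whenever $l > \overline{\beta}_{0}$ (resp.\ $l > \overline{\beta}_{1}$) one can pick $\alpha > 0$ producing $\nu_{\alpha,\beta}(\mathcal{O}_{X}(lH)) > \nu_{\alpha,\beta}(E[1])$; the standard principle that no nonzero morphism runs from a $\nu_{\alpha,\beta}$-stable object of strictly larger slope to a stable object of smaller slope inside a heart then forces $\Hom(\mathcal{O}_{X}(lH),E[1]) = 0$, and substituting the values of $\overline{\beta}_{0}$ and $\overline{\beta}_{1}$ reproduces the numerical bounds on $l$. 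The main technical subtlety I anticipate is the sharpness of the slope comparison: one must check that the thresholds $\overline{\beta}_{0}$ and $\overline{\beta}_{1}$ match exactly the intersection of the potential wall $W(\mathcal{O}_{X}(lH), E[1])$ with the stability region provided by Theorem \ref{main2}. The reflexivity of $E$ is essential throughout, since otherwise $E[1]$ could fail to lie in $\mathcal{A}^{\beta}$ and the reduction to a $\Hom$-vanishing in the heart would break down.
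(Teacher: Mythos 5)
Your proposal is correct and follows essentially the same route as the paper: the paper proves this corollary by combining Theorem \ref{thm2} (tilt-stability of $E[1]$ at $\beta=\overline{\beta}_0$ or $\overline{\beta}_1$) with Lemma \ref{lemma5.2}, which is exactly your reduction of $H^1(X,E(-lH))$ to $\Hom(\mathcal{O}_X(lH),E[1])$ followed by a slope comparison along the Type 2 wall $W(\mathcal{O}_X(lH),E[1])$ for small $\alpha$. The numerical thresholds you identify ($l>\overline{\beta}_0$, resp. $l>\overline{\beta}_1$) are precisely the condition $l>\beta$ in that lemma, so no further sharpness check is needed.
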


These vanishing theorems generalize the Kodaira vanishing. To see
this, just taking $E=\mathcal{O}_X(H)$ in Corollary \ref{main3} and
Corollary \ref{main4}, then one obtains the Kodaira vanishing
$$H^{n-1}(X, \mathcal{O}_X(K_X+H))=H^1(X, \mathcal{O}_X(-H))=0.$$

These vanishing theorems can be used to give an effective Serre
vanishing theorem for $H^{n-1}$.

\begin{theorem}[Effective Serre vanishing for $H^{n-1}$]\label{Serre}
Let $\mathcal{F}$ be a coherent torsion free sheaf on $X$, and let
$0=\mathcal{F}_0\subset \mathcal{F}_1\subset\cdots\subset
\mathcal{F}_k=\mathcal{F}$ be its Harder-Narasimhan filtration. Set
$\mathcal{G}_i=\mathcal{F}_i/\mathcal{F}_{i-1}$. Then $H^{n-1}(X,
\mathcal{F}(K_X+lH))=0$ as soon as
\begin{eqnarray*}
l>\max_{1\leq i\leq
k}\Big\{\frac{\overline{\Delta}_H(\mathcal{G}_i)}{H^n}-\mu_{H}(\mathcal{G}_i),
\sqrt{\frac{2\overline{\Delta}_H(\mathcal{G}_i)}{(H^n)^2\rank
\mathcal{G}_i}}-\mu_{H}(\mathcal{G}_i)\Big\}.
\end{eqnarray*}
\end{theorem}

To the best of our knowledge, no explicit bounds for such an $l$ in
Theorem \ref{Serre} are known before except for some special cases.
In the case that $H$ is very ample, Langer \cite[Theorem
0.1]{Langer} also gives an effective bounds for Serre vanishing
theorem in the surface case (his proof works in any characteristic).
It is clear that for very ample $H$, the vanishing of $H^{n-1}$ in
higher dimensions also follows from the surface case by a very
simple restriction arguments. See \cite{Reider}, \cite{BS} and
\cite{Tan} for such an effective bound for rank one torsion free
sheaves on a surface. See also \cite[Example 10.2.9]{Laz} for the
effective bound for an ample line bundle on a projective variety of
any dimension.

In particular, for a $\mu_{H}$-semistable torsion free sheaf one
has:
\begin{corollary}\label{corv}
Let $\mathcal{F}$ be a $\mu_{H}$-semistable torsion free sheaf on
$X$ with $\rank \mathcal{F}\geq2$. Then $H^{n-1}(X,
\mathcal{F}(K_X+lH))=0$ if
$l>\frac{\overline{\Delta}_H(\mathcal{F})}{H^n}-\mu_{H}(\mathcal{F})$.
\end{corollary}

In the case that $H$ is very ample and $\dim X=2$, Langer
\cite[Theorem 0.1]{Langer} gives the vanishing of $H^{1}(X,
\mathcal{F}(lH))=0$ for $$l>\frac{\Delta(\mathcal{F})}{2\rank
\mathcal{F}}-\mu_{H}(\mathcal{F})+\mbox{a linear function of}~\rank
\mathcal{F}.$$ One notes that the bound in Corollary \ref{corv} is
of the same form but with somewhat different coefficients at
$\Delta(\mathcal{F})$ and $\rank \mathcal{F}$.

\subsection*{Applications to stable sheaves on $\mathbb{P}^3$}
From the Bogomolov-Gieseker type inequality on $\mathbb{P}^3$ proved
by Macr\`i (Theorem \ref{*}), one could obtain another application
of Theorem \ref{main1} to stable sheaves on $\mathbb{P}^3$. In order
to state them explicitly, we need the following function.

\begin{definition}\label{round}
Let $r$ be a real number, and $m$ be a positive integer, we define
$$[r]_{m}:=\max\{\frac{a}{b}: a, b\in \mathbb{Z}, \frac{a}{b}<r,
1\leq b\leq m\}.$$
\end{definition}

\begin{theorem}\label{Chern}
Suppose $H$ is a plane in $\mathbb{P}^3$. Let $E$ be a
$\mu_{H}$-stable torsion free sheaf on $\mathbb{P}^3$, and let
$l(E)=\frac{c_1^3(E)-3c_1(E)\Delta(E)}{6(\rank E)^2}$.
\begin{enumerate}
\item If $\mu^{\max}_{H}(E)>\mu_{H}(E)-\frac{1}{\rank
E}\sqrt{\frac{\Delta(E)}{\rank E+1}}$, then
\begin{equation*} \frac{\Delta(E)}{6\rank
E}\Big(\mu_H(E)-[\mu_H(E)]_{\rank E}+\frac{\Delta(E)/(\rank
E)^2}{\mu_H(E)-[\mu_H(E)]_{\rank E}}\Big)+l(E)\geq\ch_3(E).
\end{equation*}
\item If $\mu^{\max}_{H}(E)\leq\mu_{H}(E)-\frac{1}{\rank
E}\sqrt{\frac{\Delta(E)}{\rank E+1}}$, then
\begin{equation*}
\frac{(\rank E+2)(\Delta(E))^{\frac{3}{2}}}{6(\rank E)^2\sqrt{\rank
E+1}}+l(E)\geq\ch_3(E).
\end{equation*}
\end{enumerate}
\end{theorem}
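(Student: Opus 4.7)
The strategy is to combine Theorem~\ref{main1} with Macr\`i's Bogomolov--Gieseker type inequality on $\mathbb{P}^3$ (Theorem~\ref{*}), then pass to the limit $\alpha\to 0^+$.

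Since $\Pic(\mathbb{P}^3)=\mathbb{Z}\cdot H$, every proper subsheaf $F\subset E$ has $\mu_H$-slope with denominator dividing $\rank F\le \rank E-1$, and hence $\mu^{\max}_H(E)\le [\mu_H(E)]_{\rank E}$. I would invoke Theorem~\ref{main1} with $B=0$ and $\mu=[\mu_H(E)]_{\rank E}$. Setting $a:=\mu_H(E)-[\mu_H(E)]_{\rank E}$, in case~(1) the hypothesis $\mu^{\max}_H(E)>\mu_H(E)-\tfrac{1}{\rank E}\sqrt{\overline{\Delta}_H(E)/(\rank E+1)}$ forces case~(1) of Theorem~\ref{main1}, producing $\nu_{\alpha,\beta_0}$-stability of $E$ for every $\alpha>0$, with $\beta_0=\mu_H(E)-\overline{\Delta}_H(E)/((\rank E)^2 a)$; in case~(2), the complementary hypothesis triggers case~(2) of Theorem~\ref{main1}, giving $\nu_{\alpha,\beta_1}$-stability for every $\alpha>0$, at $\beta_1=\mu_H(E)-\sqrt{(\rank E+1)\overline{\Delta}_H(E)}/\rank E$.

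Macr\`i's theorem asserts $Q_{\alpha,\beta}(E)\ge 0$ whenever $E$ is $\nu_{\alpha,\beta}$-tilt-stable; on $\mathbb{P}^3$ this reads
\begin{equation*}
\alpha^2\,\overline{\Delta}_H(E)+4\bigl(H\ch_2^\beta(E)\bigr)^2\;\ge\;6\,H^2\ch_1^\beta(E)\cdot \ch_3^\beta(E).
\end{equation*}
Since the inequality holds for every $\alpha>0$ at $\beta=\beta_0$ (respectively $\beta_1$), I would pass to the limit $\alpha\to 0^+$ by continuity of $Q$ in $\alpha$ to obtain
\begin{equation*}
\ch_3^\beta(E)\le \frac{2\bigl(H\ch_2^\beta(E)\bigr)^2}{3\,H^2\ch_1^\beta(E)}.
\end{equation*}

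To finish, I would expand both sides in terms of the original Chern characters of $E$, using $H^2\ch_1^\beta(E)=\rank E\cdot(\mu_H(E)-\beta)$, the identity $2\,\rank E\cdot H\ch_2(E)=c_1(E)^2-\overline{\Delta}_H(E)$, and the direct computation
\begin{equation*}
\ch_3^\beta(E)=\ch_3(E)-l(E)+\frac{\rank E\cdot(\mu_H(E)-\beta)^3}{6}-\frac{\overline{\Delta}_H(E)\,(\mu_H(E)-\beta)}{2\,\rank E}.
\end{equation*}
Substituting $\beta=\beta_0$ reproduces, after cancellation, the symmetric combination $\tfrac{\overline{\Delta}_H(E)}{6\,\rank E}\bigl(a+\overline{\Delta}_H(E)/((\rank E)^2 a)\bigr)+l(E)$ of case~(1); substituting $\beta=\beta_1$ collapses the expression to the closed form of case~(2). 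The principal obstacle is this last algebraic step: one must verify that the expansion produces precisely the stated combinations, which hinges on the specific shape $l(E)=(c_1(E)^3-3c_1(E)\overline{\Delta}_H(E))/(6(\rank E)^2)$ for the coefficients to align.
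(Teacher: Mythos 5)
Your proposal is correct and follows essentially the same route as the paper: apply Theorem \ref{main1} to get tilt-stability of $E$ at $\beta_0$ (resp.\ $\beta_1$) for all $\alpha>0$, take the $\alpha\to 0^+$ limit of the inequality of Theorem \ref{*}, and expand in the original Chern characters (your formula for $\ch_3^{\beta}(E)$ checks out and reproduces the paper's intermediate inequality). The one adjustment needed is in case (2): there you must feed $\mu=\mu^{\max}_H(E)$ rather than $\mu=[\mu_H(E)]_{\rank E}$ into Theorem \ref{main1}, since the latter may exceed the threshold $\mu_H(E)-\tfrac{1}{\rank E}\sqrt{\overline{\Delta}_H(E)/(\rank E+1)}$ and would then only trigger case (1) of that theorem; in case (1), by contrast, your direct choice $\mu=[\mu_H(E)]_{\rank E}$ actually streamlines the paper's argument, which first works with $\mu^{\max}_H(E)$ and then needs a monotonicity observation for $t\mapsto t+c/t$ to replace it by $[\mu_H(E)]_{\rank E}$.
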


In particular, when $\rank E=2$, we have:
\begin{corollary}\label{cor}
Under the situation in the above theorem, we further assume that $E$
is of rank two.
\begin{enumerate}
\item If $\mu^{\max}_{H}(E)>-\sqrt{\frac{c_2(E)}{3}}$ and $c_1(E)=0$, then
$c_3(E)\leq\frac{4}{3}c_2^2(E)+\frac{1}{3}c_2(E)$.

\item If $\mu^{\max}_{H}(E)\leq-\sqrt{\frac{c_2(E)}{3}}$ and $c_1(E)=0$, then
$c_3(E)\leq\big(\frac{4}{3}c_2(E)\big)^{\frac{3}{2}}$.

\item If $\mu^{\max}_{H}(E)>-\frac{1}{2}-\frac{1}{2}\sqrt{\frac{4c_2(E)-1}{3}}$ and $c_1(E)=-1$, then
$c_3(E)\leq\frac{4}{3}c_2^2(E)-\frac{1}{3}c_2(E)$.

\item If $\mu^{\max}_{H}(E)\leq-\frac{1}{2}-\frac{1}{2}\sqrt{\frac{4c_2(E)-1}{3}}$ and $c_1(E)=-1$, then
$c_3(E)\leq\big(\frac{4c_2(E)-1}{3}\big)^{\frac{3}{2}}$.
\end{enumerate}
\end{corollary}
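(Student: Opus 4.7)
The plan is to specialize Theorem \ref{Chern} directly: Corollary \ref{cor} merely records what its two cases say after substituting $\rank E=2$ together with $c_1(E)\in\{0,-1\}$. There is no new geometric input required.

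I would begin by unpacking the numerical invariants appearing in Theorem \ref{Chern} in rank two. With $H$ a plane in $\mathbb{P}^3$ (so $H^3=1$) and $\rank E=2$, one has
\[
\mu_H(E)=\tfrac{c_1(E)}{2},\qquad \overline{\Delta}_H(E)=4c_2(E)-c_1(E)^2,
\]
\[
\ch_3(E)=\tfrac{1}{6}\bigl(c_1(E)^3-3c_1(E)c_2(E)+3c_3(E)\bigr),\qquad l(E)=\tfrac{1}{24}\bigl(c_1(E)^3-3c_1(E)\overline{\Delta}_H(E)\bigr).
\]
A direct substitution into $\mu_H(E)-\tfrac{1}{\rank E}\sqrt{\overline{\Delta}_H(E)/(\rank E+1)}$ reproduces the thresholds $-\sqrt{c_2(E)/3}$ (for $c_1(E)=0$) and $-\tfrac{1}{2}-\tfrac{1}{2}\sqrt{(4c_2(E)-1)/3}$ (for $c_1(E)=-1$), so cases (1)/(2) of the corollary correspond to Theorem \ref{Chern}(1)/(2) with $c_1=0$, while cases (3)/(4) correspond to them with $c_1=-1$.

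The one step requiring a small computation is $[\mu_H(E)]_{\rank E}=[\mu_H(E)]_2$. Directly from the definition, $[0]_2=-\tfrac{1}{2}$ (attained at $a=-1,b=2$) and $[-\tfrac{1}{2}]_2=-1$ (attained at $a=-1,b=1$, with no admissible fraction strictly between $-1$ and $-\tfrac{1}{2}$). The critical observation is that in both cases $\mu_H(E)-[\mu_H(E)]_2=\tfrac{1}{2}$, which keeps the algebra uniform across the two values of $c_1$.

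With these ingredients assembled, cases (1) and (3) follow from Theorem \ref{Chern}(1) by simplifying
\[
\frac{\overline{\Delta}_H(E)}{12}\Bigl(\tfrac{1}{2}+\tfrac{\overline{\Delta}_H(E)/4}{1/2}\Bigr)+l(E)\ge\ch_3(E),
\]
and cases (2) and (4) follow from Theorem \ref{Chern}(2) by simplifying
\[
\frac{4\,\overline{\Delta}_H(E)^{3/2}}{24\sqrt{3}}+l(E)\ge\ch_3(E).
\]
In each case one clears a factor of $6$; the contributions of $c_1(E)^3$ and $-3c_1(E)c_2(E)$ from $\ch_3(E)$ cancel cleanly against $l(E)$, and the remaining inequality rearranges to the stated bound on $c_3(E)$. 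The main ``obstacle'' is therefore purely arithmetic bookkeeping, and the only nontrivial input is the identity $\mu_H(E)-[\mu_H(E)]_2=\tfrac{1}{2}$ for both $c_1(E)=0$ and $c_1(E)=-1$.
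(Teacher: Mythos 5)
Your proposal is correct and follows exactly the paper's route: the paper also just substitutes $\rank E=2$ into Theorem \ref{Chern}, noting $[\mu_H(E)]_2=\frac{c_1(E)-1}{2}$ (equivalently, your identity $\mu_H(E)-[\mu_H(E)]_2=\frac{1}{2}$) together with $\ch_2=\frac{1}{2}c_1^2-c_2$ and $\ch_3=\frac{1}{6}(c_1^3-3c_1c_2+3c_3)$. Your write-up simply makes explicit the arithmetic (in particular the cancellation $\ch_3(E)-l(E)=\frac{1}{2}c_3(E)$) that the paper leaves to the reader.
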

One can compare Corollary \ref{cor} with Hartshorne's bounds:
\begin{theorem}[Hartshorne {\cite[Theorem 8.2]{Har}}]
Let $E$ be a rank two $\mu_H$-stable reflexive sheaf on
$\mathbb{P}^3$, where $H$ is a plane in $\mathbb{P}^3$.
\begin{enumerate}
\item If $c_1(E)=0$, then $c_3(E)\leq c_2^2(E)-c_2(E)+2$.

\item If $c_1(E)=-1$, then $c_3(E)\leq c_2^2(E)$.
\end{enumerate}
\end{theorem}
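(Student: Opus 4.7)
Strictly speaking there is nothing here to prove ahead of the author: the final statement is Hartshorne's theorem from \emph{Stable Reflexive Sheaves}, quoted only so that Corollary \ref{cor} can be compared with the classical bound. So the sketch below is a reconstruction of Hartshorne's original strategy rather than a prediction of a forthcoming proof in this paper.

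My plan would be to introduce Hartshorne's \emph{spectrum} of a rank two $\mu_H$-stable reflexive sheaf $E$ on $\mathbb{P}^3$. After normalising so that $c_1(E)\in\{-1,0\}$, one attaches to $E$ a finite multiset of integers $\{k_1,\dots,k_{c_2(E)}\}$ defined via the jumps of $h^1(E(n))$ and $h^2(E(n))$ as $n$ varies. Riemann--Roch applied to $\chi(E(n))$ then identifies $c_2(E)$ with the cardinality of the spectrum and expresses $c_3(E)$ as an explicit polynomial invariant of the $k_i$, so that bounding $c_3(E)$ reduces to extremising this polynomial under combinatorial constraints on the spectrum.

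The two relevant constraints are a \emph{symmetry} of the multiset, coming from Serre duality on $\mathbb{P}^3$ together with $E^\vee \cong E(-c_1)$, and a \emph{connectedness} property, stating that the set of values taken by the $k_i$ is an interval of consecutive integers. Once both are available, the optimal spectrum is pinned down by hand and yields precisely $c_3 \leq c_2^2 - c_2 + 2$ when $c_1 = 0$ and $c_3 \leq c_2^2$ when $c_1 = -1$.

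The hard step is connectedness. Hartshorne proves it by restricting $E$ to a general plane $H \cong \mathbb{P}^2$, controlling the cohomology of $E|_H$ via the Grauert--M\"ulich theorem on a general line in $H$, and then running a semicontinuity and induction argument to rule out internal gaps in the spectrum; the Bogomolov inequality on the plane also enters at this stage. Essentially all the delicate geometry happens here, and once connectedness is in hand the remaining argument reduces to an elementary combinatorial maximisation of the Riemann--Roch expression for $c_3$ over symmetric connected multisets.
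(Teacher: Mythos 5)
You are right that there is nothing to check against here: the paper states this result purely as a point of comparison for Corollary \ref{cor} and offers no proof, simply citing Hartshorne's \emph{Stable reflexive sheaves} \cite{Har}. So the only thing to assess is your reconstruction of Hartshorne's argument, and there I would flag two points. First, historically, the bounds $c_3\leq c_2^2-c_2+2$ (for $c_1=0$) and $c_3\leq c_2^2$ (for $c_1=-1$) are Theorem 8.2 of the 1980 paper, whose proof does not use the spectrum (that tool appears in the sequels); it takes the minimal twist $E(n_0)$ with $H^0(E(n_0))\neq0$, lets $Y$ be the curve where a section vanishes, uses the identity $c_3=2p_a(Y)-2-(\deg Y)(2n_0+c_1-4)$ together with the lower bound on $n_0$ coming from stability, and then invokes genus bounds for space curves. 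Second, and more substantively, the \emph{symmetry} constraint in your sketch is wrong for reflexive sheaves: while $E^\vee\cong E(-c_1)$ does hold for rank two reflexive sheaves, Serre duality no longer matches $h^1(E(l))$ with $h^2(E(-l-4-c_1))$ because $\mathcal{E}xt^1(E,\omega)$ is supported at the non-locally-free points with $h^0(\mathcal{E}xt^1(E,\omega))=c_3(E)$. A genuinely symmetric spectrum is exactly the locally free case and forces $c_3=0$, so if you impose symmetry you cannot recover the stated quadratic bounds; Hartshorne's spectrum arguments instead use one-sided range and connectedness constraints, and the asymmetry of the spectrum is precisely where $c_3$ lives. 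Since the statement is only quoted here, none of this affects the paper, but as a proof sketch of the theorem itself the symmetry step would have to be removed and replaced.
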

We notice that when
$\mu^{\max}_{H}(E)>\mu_{H}(E)-\frac{1}{2}\sqrt{\frac{\overline{\Delta}_H(E)}{3}}$,
Corollary \ref{cor} is weaker than Hartshorne's result, but it works
for the more general torsion free case. There are also analogous
results in the rank 3 case (see \cite[Theorem 4.2 and 4.3]{EHV} and
\cite[Theorem 1.2]{Miro}). In the rank 2 and 3 cases, the results
\cite{Har, EHV, Miro} are more precise and they describe all the
possible Chern classes of stable sheaves, constructing them for
given triples of Chern classes in the allowed regions.

The bounds in Theorem \ref{Chern} can have a simpler but weaker
form:
\begin{corollary}\label{cor1}
Let $E$ be a $\mu_H$-stable torsion free sheaf on $\mathbb{P}^3$
with $\rank E\geq3$, where $H$ is a plane in $\mathbb{P}^3$. We have
$$\frac{(\Delta(E))^2}{6\rank
E}+\frac{\Delta(E)}{6(\rank
E)^3}+\frac{c_1^3(E)-3c_1(E)\Delta(E)}{6(\rank E)^2}\geq\ch_3(E).$$
\end{corollary}

\subsection*{Organization of the paper}
Our paper is organized as follows. In Section \ref{S2}, we review
basic notions and properties of some classical stabilities for
coherent sheaves, tilt-stability, the conjectural inequality
proposed in \cite{BMT, BMS} and variants of the classical
Bogomolov-Gieseker inequality satisfies by tilt-stable objects. Then
in Section \ref{S3} we recall the properties of walls for
tilt-stability. In Section \ref{S4} we introduce the extremal
ellipses, and study the intersections of the walls and the extremal
ellipses. We prove Theorem \ref{main1} and \ref{main2} in Section
\ref{S5}. Vanishing theorems in Corollary \ref{main3}, Corollary
\ref{main4} and their applications will be showed in Section
\ref{S6}. In Section \ref{S7}, we give the application of Theorem
\ref{main1} to the Chern classes of a $\mu_{H}$-stable sheaf on
$\mathbb{P}^3$.

\subsection*{Notation}
We work over the complex numbers in this paper. We will always
denote by $X$ a smooth projective variety of dimension $n\geq 2$ and
by $\D^b(X)$ its bounded derived category of coherent sheaves. $K_X$
and $\omega_X$ denote the canonical divisor and canonical sheaf of
$X$, respectively. For a triangulated category $\mathcal{D}$, we
write $\K(\mathcal{D})$ for the Grothendieck group of $\mathcal{D}$.

We write $\mathcal{H}^j(E)$ ($j\in \mathbb{Z}$) for the cohomology
sheaves of a complex $E\in \D^b(X)$. We also write $H^j(F)$ ($j\in
\mathbb{Z}_{\geq0}$) for the cohomology groups of a sheaf
$F\in\Coh(X)$. For a sheaf $G\in\Coh(X)$, we denote by $G^*:=\Hom(G,
\mathcal{O}_X)$ the dual sheaf of $G$. Given a complex number
$z\in\mathbb{C}$, we denote its real and imaginary part by $\Re z$
and $\Im z$, respectively.


\subsection*{Acknowledgments}
I would like to thank the anonymous referee for his or her valuable
comments and suggestions which make our paper more readable. I would
also like to thank Rong Du, Zhan Li, Wenfei Liu, Jun Lu, Xin Lu,
Xiaotao Sun, Sheng-Li Tan, Wanyuan Xu, Qizheng Yin, Fei Yu, Xun Yu,
and Lei Zhang for their interest and discussions. The author was
supported by National Natural Science Foundation of China (Grant No.
11771294, 11301201).

\section{Preliminaries}\label{S2}
In this section, we review some basic notions of stability for
coherent sheaves, the weak Bridgeland stability conditions and
Bogomolov-Gieseker type inequalities.

\subsection{Stability for sheaves}
For any $\mathbb{Q}$-divisor $D$ on $X$, we define the twisted Chern
character $\ch^{D}=e^{-D}\ch$. More explicitly, we have
\begin{eqnarray*}
\begin{array}{lcl}
\ch^{D}_0=\ch_0=\rank  && \ch^{D}_2=\ch_2-D\ch_1+\frac{D^2}{2}\ch_0\\
&&\\
\ch^{D}_1=\ch_1-D\ch_0 &&
\ch^{D}_3=\ch_3-D\ch_2+\frac{D}{2}\ch_1-\frac{D^3}{6}\ch_0.
\end{array}
\end{eqnarray*}

The first important notion of stability for a sheaf is slope
stability, also known as Mumford stability. We define the slope
$\mu_{H, D}$ of a coherent sheaf $E\in \Coh(X)$ by
\begin{eqnarray*}
\mu_{H, D}(E)= \left\{
\begin{array}{lcl}
+\infty,  & &\mbox{if}~\ch^D_0(E)=0,\\
&&\\
\frac{H^{n-1}\ch_1^{D}(E)}{H^n\ch_0^{D}(E)}, & &\mbox{otherwise}.
\end{array}\right.
\end{eqnarray*}

\begin{definition}\label{def2.1}
A coherent sheaf $E$ on $X$ is $\mu_{H, D}$-(semi)stable (or
slope-(semi)stable) if, for all non-zero subsheaves
$F\hookrightarrow E$, we have
$$\mu_{H, D}(F)<(\leq)\mu_{H, D}(E/F).$$
\end{definition}
Note that $\mu_{H, D}$ only differs from $\mu_{H}:=\mu_{H, 0}$ by a
constant, thus $\mu_{H, D}$-stability and $\mu_{H}$-stability
coincide. Harder-Narasimhan filtrations (HN-filtrations, for short)
with respect to $\mu_{H, D}$-stability exist in $\Coh(X)$: given a
non-zero sheaf $E\in\Coh(X)$, there is a filtration
$$0=E_0\subset E_1\subset\cdots\subset E_k=E$$
such that: $G_i:=E_i/E_{i-1}$ is $\mu_{H, D}$-semistable, and
$\mu_{H, D}(G_1)>\cdots>\mu_{H, D}(G_k)$. We set $\mu^+_{H,
D}(E):=\mu_{H, D}(G_1)$ and $\mu^-_{H, D}(E):=\mu_{H, D}(G_k)$.

Another well-known stability for a sheaf is Gieseker stability. To
define it, write the reduced twisted Hilbert polynomial of a
positive rank sheaf $E$ as

$$G_{H, D}(E, m)=\frac{\chi(E(mH-D))}{\rank E},$$ where the Euler characteristic is computed
formally. A simple Riemann-Roch computation shows that
\begin{eqnarray*}
G_{H, D}(E,
m)&=&\frac{m^nH^n}{n!}+\frac{m^{n-1}H^{n-1}}{(n-1)!}\Big(\frac{\ch^D_1(E)-\frac{K_X}{2}\rank
E}{\rank E}\Big)\\
&&+\frac{m^{n-2}H^{n-2}}{(n-2)!}\Big(\frac{\ch^D_2(E)-\frac{K_X}{2}\ch_1^D(E)}{\rank
E}+\frac{K_X^2+c_2(X)}{12} \Big)+\cdots
\end{eqnarray*}

\begin{definition}
A coherent torsion free sheaf $E$ on $X$ is $(H,D)$-twisted Gieseker
(semi)stable (or $G_{H, D}$-(semi)stable) if, for all non-zero
proper subsheaves $F\hookrightarrow E$, we have
$$G_{H, D}(F, m)<(\leq)G_{H, D}(E, m)$$ for all $m\gg0$.
\end{definition}
When $D=0$, we recover usual $H$-Gieseker stability.

Now we introduce the $p_{H, D}$-stability mentioned in the
introduction. The polynomial slope $p_{H, D}$ of a sheaf $E\in
\Coh(X)$ is
\begin{eqnarray*}
p_{H, D}(E,m)= \left\{
\begin{array}{lcl}
(+\infty)m+(+\infty),  & &\mbox{if}~\ch^D_0(E)=0,\\
&&\\
\frac{H^{n-1}\ch_1^D(E)}{H^n\ch_0^D
(E)}m+\frac{H^{n-2}\ch_2^D(E)}{H^n\ch_0^D (E)}, & &\mbox{otherwise}.
\end{array}\right.
\end{eqnarray*}

\begin{definition}
A coherent sheaf $E$ on $X$ is $p_{H, D}$-(semi)stable (or
polynomial slope-(semi)stable) if, for all non-zero subsheaves
$F\hookrightarrow E$, we have
$$p_{H, D}(F,m)<(\leq)p_{H, D}(E/F,m)$$ for all $m\gg0$.
\end{definition}

There are obvious relations among those stabilities. One can easily
prove
\begin{lemma}
For any coherent torsion free sheaf $E$ on $X$, one has the
following chain of implications
\begin{eqnarray*}
&&\mu_{H}\mbox{-stable}\Rightarrow
p_{H,D+\frac{K_X}{2}}\mbox{-stable}\Rightarrow G_{H,
D}\mbox{-stable} \Rightarrow G_{H, D}\mbox{-semistable}\\
&&\Rightarrow p_{H,D+\frac{K_X}{2}}\mbox{-semistable}\Rightarrow
\mu_{H}\mbox{-semistable}.
\end{eqnarray*}
Moreover, $G_{H, D}$-stability and $p_{H,D+\frac{K_X}{2}}$-stability
are equivalent for any coherent torsion free sheaf on a surface.
\end{lemma}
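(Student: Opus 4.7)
The plan is to reduce every implication in the chain to a direct comparison of polynomial coefficients. First, I would observe that $\mu_{H,D}$ and $\mu_H$ differ only by the $E$-independent constant $H^{n-1}D/H^n$, so slope stability is insensitive to the twist and I may freely use $\mu_{H,D+K_X/2}$ in place of $\mu_H$. Using the Riemann--Roch expansion recorded just above the lemma together with the twist identities
\[
\ch_1^{D}-\tfrac{K_X}{2}\ch_0=\ch_1^{D+K_X/2},\qquad \ch_2^{D}-\tfrac{K_X}{2}\ch_1^{D}=\ch_2^{D+K_X/2}-\tfrac{K_X^2}{8}\ch_0,
\]
I would verify that the two leading $E$-varying coefficients of the reduced Hilbert polynomial $G_{H,D}(E,m)$ coincide with the two coefficients of $p_{H,D+K_X/2}(E,m)$ up to $E$-independent multiplicative and additive constants.

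With this dictionary each implication is a one-line coefficient comparison. For $\mu_H$-(semi)stable $\Rightarrow p_{H,D+K_X/2}$-(semi)stable, a subsheaf $F\hookrightarrow E$ with $\mu_H(F)<(\leq)\mu_H(E/F)$ yields strict (resp.\ weak) inequality of the leading coefficients of the linear polynomials $p(F,m)$ and $p(E/F,m)$, which dominates for $m\gg 0$. For $p$-(semi)stable $\Rightarrow G_{H,D}$-(semi)stable, a strict (resp.\ weak) lexicographic comparison of the top two varying coefficients of the reduced Hilbert polynomials forces $G_{H,D}(F,m)<(\leq)G_{H,D}(E,m)$ for $m\gg 0$; to move between the ``compare with $E$'' and ``compare with $E/F$'' conventions I would use that the reduced Hilbert polynomial is the rank-weighted average of those of $F$ and $E/F$. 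The reverse implications $G$-semistable $\Rightarrow p$-semistable $\Rightarrow \mu_H$-semistable run the same comparison in the other direction: a polynomial inequality valid for all $m\gg 0$ forces a weak lexicographic inequality on coefficients, whence the slope and polynomial-slope inequalities follow.

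For the final sentence, when $n=2$ the reduced Hilbert polynomial is quadratic with leading coefficient $H^2/2$ independent of $E$, so lex comparison for $m\gg 0$ is exactly lex comparison of its $m^1$ and constant terms. By the dictionary above these are (modulo $E$-independent constants) the two coefficients of $p_{H,D+K_X/2}(E,m)$, so on a surface $G_{H,D}$-(semi)stability and $p_{H,D+K_X/2}$-(semi)stability are defined by literally the same inequalities.

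There is no genuine obstacle: the lemma is bookkeeping. The only point requiring care is the separation of $E$-dependent and $E$-independent terms in Riemann--Roch, together with the need to propagate strict $p$-stability into strict $G$-stability by handling both the ``strict inequality of leading coefficient'' and the ``equality of leading coefficient plus strict inequality of next coefficient'' cases; the same dichotomy reappears going backwards from $G$-semistability to $p$-semistability.
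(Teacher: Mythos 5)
The paper offers no proof of this lemma at all (it is introduced with ``one can easily prove''), so there is nothing to compare against; your coefficient-comparison argument is the standard and surely intended one, and the dictionary you set up is correct: the $m^{n-1}$ and $m^{n-2}$ coefficients of $G_{H,D}(E,m)$ are, up to $E$-independent positive multiplicative and additive constants, exactly $\mu_{H,D+K_X/2}(E)$ and the constant term of $p_{H,D+K_X/2}(E,m)$, and the twist identities you use are right.

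There is one small but genuine gap in the step $p_{H,D+K_X/2}$-stable $\Rightarrow G_{H,D}$-stable (and it recurs in the ``in particular'' sentence): a non-zero proper subsheaf $F\subset E$ may have $\rank F=\rank E$, so that $E/F$ is torsion. For such $F$ the $p$-stability condition is vacuous, since $p_{H,D+K_X/2}(E/F,m)=+\infty$, while $G_{H,D}$-stability still demands the strict inequality $G_{H,D}(F,m)<G_{H,D}(E,m)$; and if $E/F$ is supported in codimension $\geq 2$ the top two varying coefficients of $F$ and $E$ coincide, so your lexicographic dictionary cannot decide the comparison. The fix is a one-line supplement rather than a repair of the method: since $\rank F=\rank E$ one has
\[
G_{H,D}(F,m)=G_{H,D}(E,m)-\frac{\chi\bigl((E/F)(mH-D)\bigr)}{\rank E},
\]
and the Hilbert polynomial of a non-zero coherent sheaf is eventually positive, so the required strict inequality holds automatically for every such $F$. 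With this case added, your argument for the chain of implications and for the equivalence on surfaces is complete; I would also state explicitly that the ``compare with $E$'' versus ``compare with $E/F$'' translation uses additivity of the (unreduced) Hilbert polynomial and of $\ch$, as you indicate.
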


\subsection{Weak Bridgeland stability conditions}\label{S2.2}
 The notion of ``weak
Bridgeland stability condition'' and its variant ``very weak
Bridgeland stability condition'' have been introduced in
\cite[Section 2]{Toda1} and \cite[Definition 12.1]{BMS},
respectively. We will use a slightly different notion in order to
adapt our situation. The main difference is the rotation of the
half-plane in $\mathbb{C}$.
\begin{definition}
A weak Bridgeland stability condition on $X$ is a pair $\sigma=(Z,
\mathcal{A})$, where where $\mathcal{A}$ is the heart of a bounded
$t$-structure on $\D^b(X)$, and $Z:\K(\D^b(X))\rightarrow
\mathbb{C}$ is a group homomorphism (called central charge) such
that
\begin{itemize}
\item $Z$ satisfies the following positivity property for any $E\in \mathcal{A}$:
$$Z(E)\in\{re^{i\pi\phi}: r\geq0, 0<\phi\leq1\}.$$
\item Every
non-zero object in $\mathcal{A}$ has a Harder-Narasimhan filtration
in $\mathcal{A}$ with respect to $\nu_Z$-stability, here the slope
$\nu_Z$ of an object $E\in \mathcal{A}$ is defined by
\begin{eqnarray*}
\nu_{Z}(E)= \left\{
\begin{array}{lcl}
+\infty,  & &\mbox{if}~\Im Z(E)=0,\\
&&\\
-\frac{\Re Z(E)}{\Im Z(E)}, & &\mbox{otherwise}.
\end{array}\right.
\end{eqnarray*}
\end{itemize}
\end{definition}

Let $B$ be a fixed $\mathbb{Q}$-divisor on $X$. Let $\alpha>0$ and
$\beta$ be two real numbers. We will construct a family of weak
Bridgeland stability conditions on $X$ that depends on these two
parameters.


There exists a \emph{torsion pair} $(\mathcal{T}_{\beta
H+B},\mathcal{F}_{\beta H+B})$ in $\Coh(X)$ defined as follows:
\begin{eqnarray*}
\mathcal{T}_{\beta H+B}=\{E\in\Coh(X):\mu^-_{H, \beta H+B}(E)>0 \}\\
\mathcal{F}_{\beta H+B}=\{E\in\Coh(X):\mu^+_{H, \beta H+B}(E)\leq0
\}
\end{eqnarray*}
Equivalently, $\mathcal{T}_{\beta H+B}$ and $\mathcal{F}_{\beta
H+B}$ are the extension-closed subcategories of $\Coh(X)$ generated
by $\mu_{H, \beta H+B}$-stable sheaves of positive and non-positive
slope, respectively.

\begin{definition}
We let $\Coh^{\beta H+B}(X)\subset \D^b(X)$ be the extension-closure
$$\Coh^{\beta H+B}(X)=\langle\mathcal{T}_{\beta H+B}, \mathcal{F}_{
\beta H+B}[1]\rangle.$$
\end{definition}

By the general theory of torsion pairs and tilting \cite{HRS},
$\Coh^{\beta H+B}(X)$ is the heart of a bounded t-structure on
$\D^b(X)$; in particular, it is an abelian category. Consider the
following central charge
$$Z_{\alpha, \beta}(E)=H^{n-2}\Big(\frac{\alpha^2 H^2}{2}\ch_0^{\beta H+B}(E)-\ch_2^{\beta H+B}(E)+i H\ch_1^{\beta H+B}(E)
\Big).$$ We think of it as the composition
$$Z_{\alpha, \beta}: \K(\D^b(X))\xrightarrow{\ch_H} \mathbb{Q}^3 \xrightarrow{z_{\alpha,\beta}}
\mathbb{C},$$ where the first map is given by
$$\ch_H(E)=(H^n\ch^B_0(E), H^{n-1}\ch^B_1(E), H^{n-2}\ch^B_2(E)),$$
and the second map is defined by
\begin{equation}
z_{\alpha, \beta}(e_0, e_1,
e_2)=\frac{1}{2}(\alpha^2-\beta^2)e_0+\beta e_1-e_2+i(e_1-\beta
e_0).
\end{equation}

\begin{theorem}\label{thm2.7}
For any $(\alpha, \beta)\in \mathbb{R}_{>0}\times\mathbb{R}$,
$\sigma_{\alpha, \beta}=(Z_{\alpha, \beta}, \Coh^{\beta H+B}(X))$ is
a weak Bridgeland stability condition.
\end{theorem}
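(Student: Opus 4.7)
The plan is to verify the two axioms of a weak Bridgeland stability condition for $\sigma_{\alpha,\beta}$: the positivity of $Z_{\alpha,\beta}$ on the heart $\Coh^{\beta H+B}(X)$, and the existence of Harder--Narasimhan filtrations with respect to $\nu_{Z_{\alpha,\beta}}$.

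For positivity, one first notes that $\Im Z_{\alpha,\beta}(E) = H^{n-1}\ch_1^{\beta H+B}(E)$. For any $E$ in the heart, the short exact sequence
\[
0 \to \mathcal{H}^{-1}(E)[1] \to E \to \mathcal{H}^0(E) \to 0
\]
reduces the claim to two building blocks. If $E \in \mathcal{T}_{\beta H+B}$, then every $\mu_{H,\beta H+B}$-HN factor has slope $>0$ (or $=+\infty$), so torsion-free factors contribute strictly positive terms to $H^{n-1}\ch_1^{\beta H+B}(E)$ while torsion factors contribute non-negative ones, giving $\Im Z_{\alpha,\beta}(E) \geq 0$. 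Dually, for $F[1]$ with $F \in \mathcal{F}_{\beta H+B}$, torsion-freeness of $F$ together with $\mu^+_{H,\beta H+B}(F) \leq 0$ yields $\Im Z_{\alpha,\beta}(F[1]) = -H^{n-1}\ch_1^{\beta H+B}(F) \geq 0$. In the boundary case $\Im Z_{\alpha,\beta}(E) = 0$ with $E \neq 0$ there are two possibilities: either $E \in \mathcal{T}_{\beta H+B}$ is torsion supported in codimension $\geq 2$, in which case $\Re Z_{\alpha,\beta}(E) = -H^{n-2}\ch_2(E) < 0$ by ampleness of $H$ and effectivity of the support cycle; or $E = F[1]$ with every HN factor of $F$ being $\mu_{H,\beta H+B}$-semistable of slope zero, in which case the classical Bogomolov--Gieseker inequality applied factor by factor gives $H^{n-2}\ch_2^{\beta H+B}(F) \leq 0$, so
\[
\Re Z_{\alpha,\beta}(F[1]) = H^{n-2}\ch_2^{\beta H+B}(F) - \tfrac{\alpha^2 H^n}{2}\rank F < 0.
\]

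For the existence of HN filtrations, I would invoke the standard Bridgeland criterion (e.g.\ \cite[Proposition 2.4]{Bri1}, or the variant in \cite[Appendix B]{BMS}): it suffices to check (a) noetherianness of $\Coh^{\beta H+B}(X)$, and (b) discreteness of the image of $\Im Z_{\alpha,\beta}$ on objects of bounded Chern character. Condition (b) is immediate, since
\[
H^{n-1}\ch_1^{\beta H+B}(E) = H^{n-1}\ch_1(E) - \bigl((\beta H + B)\cdot H^{n-1}\bigr)\rank E
\]
takes values in a fixed rational affine lattice. Condition (a) follows from the standard tilting argument: an infinite strictly ascending chain in the tilt would force an infinite chain of coherent subsheaves of the cohomology sheaves of the ambient object, contradicting noetherianness of $\Coh(X)$.

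The main technical obstacle is the noetherianness in (a); the rational twist $B$ enters only as an additive shift of $\mu_{H,\beta H+B}$, so the tilting arguments of \cite{BMT} and \cite{BMS} adapt essentially verbatim. No new ideas are required beyond the calculation of the central charge and bookkeeping under the $B$-translation of slopes.
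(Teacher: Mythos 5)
The paper does not reprove this statement; its proof is a citation, observing that the surface case is \cite{Bri2, AB} and that the argument of \cite[Appendix B]{BMS} for threefolds works verbatim in any dimension. Your verification of the positivity axiom is essentially the standard one and is correct, up to one harmless over-claim: for a torsion sheaf $E\in\mathcal{T}_{\beta H+B}$ with $H^{n-1}\ch_1(E)=0$ you assert $\Re Z_{\alpha,\beta}(E)=-H^{n-2}\ch_2(E)<0$, but if $E$ is supported in codimension $\geq 3$ (e.g.\ a skyscraper on a threefold) then $Z_{\alpha,\beta}(E)=0$. Only $\Re Z\leq 0$ holds, and only that is required for a \emph{weak} stability condition --- this is exactly why $r=0$ is allowed in the image of $Z$.

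The genuine gap is in the Harder--Narasimhan part, which is the actual content of the theorem. First, discreteness of $\Im Z_{\alpha,\beta}$ fails for irrational $\beta$: the values $H^{n-1}\ch_1^{B}(F)-\beta H^{n}\ch_0(F)$ range over the subgroup $\tfrac{1}{N}\mathbb{Z}+\beta H^{n}\mathbb{Z}\subset\mathbb{R}$, which is dense whenever $\beta H^n\notin\mathbb{Q}$; moreover the subobjects of a fixed $E$ in $\Coh^{\beta H+B}(X)$ do not have bounded Chern character (the rank of $\mathcal{H}^{-1}$ of the quotient is a priori unbounded), so you cannot reduce to finitely many classes. Since the theorem is asserted for all real $\beta$, and irrational $\beta$ is genuinely used later (e.g.\ $\beta_1$ in Theorem \ref{thm1} involves a square root), this case cannot be discarded; handling it is precisely what \cite[Appendix B]{BMS} does, via the Bogomolov--Gieseker inequality to control destabilizing classes rather than via discreteness. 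Second, your noetherianness sketch does not work as stated: an ascending chain $E_1\subset E_2\subset\cdots$ in the tilted heart produces a \emph{descending} chain of coherent subsheaves $\mathcal{H}^{-1}(E/E_i)$, and $\Coh(X)$ is noetherian but not artinian, so no contradiction arises from that alone. The actual proof of noetherianness of $\Coh^{\beta H+B}(X)$ in \cite{BMT} and \cite[Appendix B]{BMS} is a nontrivial argument combining such descending chains with the boundedness of $H^{n-1}\ch_1^{\beta H+B}$ on subobjects. Both points are repairable by quoting those references --- which is what the paper itself does --- but as written your argument does not establish the HN property.
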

\begin{proof}
The required assertion is proved in \cite{Bri2, AB} for the surface
case. For the threefold case, the conclusion is showed in \cite{BMT,
BMS}. But the proof in \cite[Appendix 2]{BMS} still works for the
general case.
\end{proof}


We write $\nu_{\alpha, \beta}$ for the slope function on $\Coh^{
\beta H+B}(X)$ induced by $Z_{\alpha, \beta}$. Explicitly, for any
$E\in \Coh^{ \beta H+B}(X)$, one has
\begin{eqnarray*}
\nu_{\alpha, \beta}(E)= \left\{
\begin{array}{lcl}
+\infty,  & &\mbox{if}~H^{n-1}\ch^{\beta H+B}_1(E)=0,\\
&&\\
\frac{H^{n-2}\ch_2^{\beta H+B}(E)-\frac{1}{2}\alpha^2H^n\ch^{\beta
H+B}_0(E)}{H^{n-1}\ch^{\beta H+B}_1(E)}, & &\mbox{otherwise}.
\end{array}\right.
\end{eqnarray*}
Theorem \ref{thm2.7} gives the notion of tilt-stability:

\begin{definition}
An object $E\in\Coh^{\beta H+B}(X)$ is \emph{tilt-(semi)stable} (or
$\nu_{\alpha,\beta}$-\emph{(semi)stable}) if, for all non-trivial
subobjects $F\hookrightarrow E$, we have
$$\nu_{\alpha, \beta}(F)<(\leq)\nu_{\alpha, \beta}(E/F).$$
\end{definition}
For any $\mathcal{E}\in\Coh^{\beta H+B}(X)$, the Harder-Narasimhan
property gives a filtration in $\Coh^{\beta H+B}(X)$
$$0=\mathcal{E}_0\subset \mathcal{E}_1\subset\cdots\subset \mathcal{E}_n=\mathcal{E}$$ such that:
$\mathcal{F}_i:=\mathcal{E}_i/\mathcal{E}_{i-1}$ is
$\nu_{\alpha,\beta}$-semistable with
$\nu_{\alpha,\beta}(\mathcal{F}_1)>\cdots>\nu_{\alpha,\beta}(\mathcal{F}_n)$.

\begin{definition}
In the above filtration, we call $\mathcal{F}_1$ the
$\nu_{\alpha,\beta}$-maximal subobject of $\mathcal{E}$ in
$\Coh^{\beta H+B}(X)$, and call $\mathcal{F}_n$ the
$\nu_{\alpha,\beta}$-minimal quotient of $\mathcal{E}$.
\end{definition}

The following well known proposition establishes the relation
between $\nu_{\alpha, \beta}$-stability and $p_{H,B}$-stability.
\begin{proposition}\label{prop1}
For $E\in \Coh^{\beta H+B}(X)\cap\Coh(X)$ and $\alpha\gg0$, we have
$E$ is $\nu_{\alpha, \beta}$-(semi)stable if and only if $E$ is
$p_{H, \beta H+B}$-(semi)stable.
\end{proposition}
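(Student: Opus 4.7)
The plan is to analyze the asymptotic behavior of the tilt-slope $\nu_{\alpha,\beta}$ as $\alpha \to \infty$ and match it with the polynomial slope $p_{H,\beta H+B}(-,m)$ as $m \to \infty$, after translating subobjects in the tilted heart to genuine sheaf data. The key starting observation is that $E \in \Coh^{\beta H + B}(X) \cap \Coh(X)$ forces $E \in \mathcal{T}_{\beta H + B}$, so in particular $\mathcal{H}^{-1}(E) = 0$. For any short exact sequence $0 \to F \to E \to Q \to 0$ in $\Coh^{\beta H + B}(X)$, the long exact cohomology sequence gives $\mathcal{H}^{-1}(F) = 0$ together with
\begin{equation*}
0 \to K \to F \to E \to Q' \to 0 \quad \text{in } \Coh(X),
\end{equation*}
where $K := \mathcal{H}^{-1}(Q) \in \mathcal{F}_{\beta H + B}$ and $Q' := \mathcal{H}^0(Q) \in \mathcal{T}_{\beta H + B}$. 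In particular $F$ is a sheaf and $F' := F/K$ is an honest subsheaf of $E$ with quotient $E/F' = Q'$. This dictionary pairs tilt-subobjects of $E$ with subsheaves $F' \subset E$ decorated by an extension datum $K \in \mathcal{F}_{\beta H + B}$.

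Next I would perform the leading-order slope comparison. Writing $r = \ch_0^{\beta H + B}$, $d = H^{n-1}\ch_1^{\beta H + B}$, $c = H^{n-2}\ch_2^{\beta H + B}$ and $G = E/F$, direct computation yields
\begin{equation*}
\nu_{\alpha,\beta}(F) - \nu_{\alpha,\beta}(G) = \frac{\bigl(c(F)d(G) - c(G)d(F)\bigr) - \tfrac{\alpha^2}{2}H^n\bigl(r(F)d(G) - r(G)d(F)\bigr)}{d(F)\,d(G)},
\end{equation*}
and
\begin{equation*}
p_{H,\beta H + B}(F,m) - p_{H,\beta H + B}(G,m) = \frac{(d(F)r(G) - d(G)r(F))m + (c(F)r(G) - c(G)r(F))}{H^n\,r(F)\,r(G)}.
\end{equation*}
Since $F, G \in \mathcal{T}_{\beta H + B}$ forces $d(F), d(G) > 0$, and torsion subobjects destabilize in both senses (so $E$ must be torsion-free and we may restrict to $r(F), r(G) > 0$), the signs of both differences in their respective asymptotic regimes are governed first by $\mu_{H,\beta H + B}(F) - \mu_{H,\beta H + B}(G)$, and when those slopes coincide, by the remaining constant-term comparison. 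This already gives the equivalence when $K=0$ and both $F', E/F'$ lie in $\mathcal{T}_{\beta H + B}$.

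Finally I would close both directions by absorbing the $K \neq 0$ correction and the passage between arbitrary subsheaves and those in $\mathcal{T}_{\beta H + B}$. For the direction ``$p$-(semi)stable $\Rightarrow$ $\nu_{\alpha,\beta}$-(semi)stable for $\alpha \gg 0$'', if $F \hookrightarrow E$ is a tilt-subobject with $K \neq 0$ then $r(K) \geq 1$, $d(K) \leq 0$ while $r(E), d(E) > 0$, so substituting $r(F) = r(F') + r(K)$, $d(F) = d(F') + d(K)$ into $r(F)d(G) - r(G)d(F)$ produces the nonnegative extra term $r(K)d(E) - d(K)r(E)$, showing that already $F'$ must $p$-destabilize $E$. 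Conversely, given a $p$-destabilizing subsheaf $F' \subset E$, replacing $F'$ by the inverse image in $E$ of the $\mathcal{T}_{\beta H + B}$-part of $E/F'$ (via the torsion pair) puts both $F'$ and $E/F'$ into $\mathcal{T}_{\beta H + B}$ without worsening $p$-destabilization, and the preceding step then produces a tilt-destabilizer for $\alpha \gg 0$. The main obstacle is establishing uniformity in $\alpha$, namely that a single threshold $\alpha_0$ suffices for all potential destabilizers; I would handle this by invoking the existence of the tilt-HN filtration from Theorem \ref{thm2.7}, reducing the question to finitely many polynomial-in-$\alpha^2$ inequalities.
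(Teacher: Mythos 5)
The paper does not actually prove this proposition --- it only cites \cite[Section 14]{Bri2} and \cite[Appendix A]{Liu} --- and what you propose is essentially that standard large-volume-limit argument. Your dictionary between tilt-subobjects and subsheaves (the four-term sequence $0\to K\to F\to E\to Q'\to 0$ with $K=\mathcal{H}^{-1}(Q)\in\mathcal{F}_{\beta H+B}$), the two slope-difference formulas, the observation that the tie-breaking constant terms have the same sign because they differ by the positive factor $d(E)/r(E)$, and the sign analysis of the correction term $r(K)d(E)-d(K)r(E)>0$ (using $r(K)\geq 1$ and $d(K)\leq 0$ for $0\neq K\in\mathcal{F}_{\beta H+B}$, which is torsion-free) are all correct, and they do give the implication from $p_{H,\beta H+B}$-(semi)stability to large-$\alpha$ tilt-(semi)stability at the level of a single destabilizer.

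There are, however, two concrete gaps. First, your torsion-pair adjustment in the converse direction is the wrong operation: $\mathcal{T}_{\beta H+B}$ is a torsion class, hence closed under quotients, so $E\in\mathcal{T}_{\beta H+B}$ already forces $E/F'\in\mathcal{T}_{\beta H+B}$ for \emph{every} subsheaf $F'$; consequently ``the inverse image in $E$ of the $\mathcal{T}_{\beta H+B}$-part of $E/F'$'' is all of $E$ and produces no destabilizer. The correct move is to replace $F'$ by its own torsion part $F'_{\mathcal{T}}$ with respect to the pair $(\mathcal{T}_{\beta H+B},\mathcal{F}_{\beta H+B})$, which \emph{is} a subobject of $E$ in $\Coh^{\beta H+B}(X)$, and to check that $\mu_{H,\beta H+B}(F'_{\mathcal{T}})\geq\mu_{H,\beta H+B}(F')$, strictly whenever $F'\notin\mathcal{T}_{\beta H+B}$ (since the slopes of $F'_{\mathcal{T}}$ are positive and those of $F'/F'_{\mathcal{T}}$ are nonpositive), so the leading-order destabilization is preserved. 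Second, the uniformity of the threshold $\alpha_0$ is not actually settled by invoking the HN filtration of Theorem \ref{thm2.7}: that filtration is taken at a fixed $(\alpha,\beta)$ and its terms may change as $\alpha$ grows, so it does not reduce the problem to finitely many polynomial inequalities. What is needed is a finiteness/boundedness statement for the numerical classes of potential destabilizers along the ray $\{\beta\}\times\mathbb{R}_{>0}$ --- e.g.\ combining $0< H^{n-1}\ch_1^{\beta H+B}(F)< H^{n-1}\ch_1^{\beta H+B}(E)$ with $\overline{\Delta}^{B}_H(F)\geq0$ and $\overline{\Delta}^{B}_H(E/F)\geq0$ from Theorem \ref{thm2.11} --- or, equivalently, the nested and locally finite wall structure of Section \ref{S3} (Proposition \ref{wall}), which guarantees an outermost actual semicircular wall above which the tilt-stability of $E$ is constant. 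With these two repairs your argument goes through.
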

\begin{proof}
See \cite[Section 14]{Bri2} and \cite[Appendix A]{Liu}.
\end{proof}

\subsection{Bogomolov-Gieseker type inequality}We now recall the
Bogomolov-Gieseker type inequality for tilt-stable complexes
proposed in \cite{BMT, BMS}.
\begin{definition}
We define the discriminant
$$\Delta:=\ch_1^2-2\ch_0\ch_2=(\ch^B_1)^2-2\ch^B_0\ch^B_2,$$
and the generalized discriminant
$$\overline{\Delta}^{\beta H+B}_H:=(H^{n-1}\ch^{\beta H+B}_1)^2-2H^n\ch^{\beta H+B}_0\cdot(H^{n-2}\ch^{\beta H+B}_2).$$
\end{definition}
A short calculation shows $$\overline{\Delta}^{\beta
H+B}_H=(H^{n-1}\ch_1^B)^2-2H^n\ch_0^B\cdot(H^{n-2}\ch_2^B)=\overline{\Delta}^{B}_H.$$
Hence the generalized discriminant is independent of $\beta$.

\begin{theorem}[Bogomolov, Gieseker]
Assume $E$ is a $\mu_{H}$-semistable torsion free sheaf on $X$. Then
$H^{n-2}\Delta(E)\geq0$.
\end{theorem}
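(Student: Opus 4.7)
The plan is to reduce to the surface case via the Mehta--Ramanathan restriction theorem, and then prove the classical Bogomolov inequality on a surface using asymptotic Riemann--Roch combined with the semistability of symmetric powers.

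For the reduction, the Mehta--Ramanathan restriction theorem guarantees that for $m\gg 0$ and a general smooth divisor $D\in|mH|$, the restriction $E|_D$ is $\mu_{H|_D}$-semistable and torsion free. Iterating this $n-2$ times yields a smooth complete intersection surface $S=D_1\cap\cdots\cap D_{n-2}$ with $D_i\in|mH|$ on which $E|_S$ remains $\mu_{H|_S}$-semistable and torsion free, and the projection formula gives $\Delta_{H|_S}(E|_S)=m^{n-2}\,\Delta_H(E)$. Hence it suffices to prove the inequality in the case $n=2$.

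On a smooth projective surface $S$, let $E$ be $\mu_H$-semistable of rank $r$. Over $\mathbb{C}$, each symmetric power $\Sym^k E$ is again $\mu_H$-semistable, with $\mu_H(\Sym^k E)=k\,\mu_H(E)$. A direct Chern-class computation combined with Riemann--Roch shows that $\chi(\Sym^k E\otimes L_k)$, for a suitably chosen sequence of line bundles $L_k$, is a polynomial in $k$ whose top-degree coefficient is a negative multiple of $\Delta_H(E)$. Choosing $L_k$ so that $\Sym^k E\otimes L_k$ has slope just slightly negative of $0$, while $(\Sym^k E\otimes L_k)^\vee\otimes\omega_S$ also has slope negative, semistability forces $H^0(\Sym^k E\otimes L_k)=0$ and Serre duality forces $H^2(\Sym^k E\otimes L_k)=0$. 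Consequently $\chi(\Sym^k E\otimes L_k)\le 0$ for all sufficiently large $k$, which forces $\Delta_H(E)\ge 0$.

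The principal obstacle, and the indispensable characteristic-zero input, is the preservation of $\mu_H$-semistability under symmetric powers; this can fail in positive characteristic without extra hypotheses, but over $\mathbb{C}$ it follows either from the Kobayashi--Hitchin correspondence via Hermite--Einstein metrics, or from Ramanan--Ramanathan's purely algebraic argument. The Mehta--Ramanathan restriction step is standard, and the asymptotic Riemann--Roch bookkeeping, though delicate, reduces to extracting top-degree terms in the expansion of $\chi(\Sym^k E\otimes L_k)$ as a polynomial in $k$.
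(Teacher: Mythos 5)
The paper itself offers no proof of this statement---it is quoted as the classical Bogomolov--Gieseker inequality---so the only question is whether your argument stands on its own. Your overall strategy (Mehta--Ramanathan restriction to a complete intersection surface, then semistability of symmetric powers plus asymptotic Riemann--Roch) is the standard and correct route, and the reduction step is fine: for general $D_i\in|mH|$ the restriction $E|_S$ stays torsion free and $\mu$-semistable, and $\Delta(E|_S)=m^{n-2}\Delta_H(E)$.

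The gap is in the surface step, in the claim that one can choose $L_k$ so that \emph{both} $H^0(\Sym^k E\otimes L_k)$ and $H^2(\Sym^k E\otimes L_k)$ vanish. Writing $F_k=\Sym^k E\otimes L_k$, your two conditions are $\mu_H(F_k)<0$ and $\mu_H(F_k^\vee\otimes\omega_S)=-\mu_H(F_k)+H\cdot K_S<0$, i.e.\ $H\cdot K_S<\mu_H(F_k)<0$. This interval is empty whenever $H\cdot K_S\geq 0$, so the argument as written fails on K3 surfaces, abelian surfaces, and surfaces of general type---and after the Mehta--Ramanathan reduction you have no control over $H\cdot K_S$ of the resulting complete intersection surface. (There is also the minor issue that the achievable slopes of $F_k$ form a discrete coset, so even a nonempty interval need not contain one.) The standard repair is to give up exact vanishing and instead bound $h^0(F_k)+h^2(F_k)\leq C\cdot\rank(\Sym^k E)=O(k^{r-1})$, using that $F_k$ and $F_k^\vee\otimes\omega_S$ are semistable of bounded slope (e.g.\ by a Le Potier--Simpson type estimate or by restriction to curves in $|H|$). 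Since, after normalizing $c_1(F_k)$ to be bounded, one computes $\ch_2(F_k)=-\tfrac{A_k}{2r}\Delta(E)$ with $A_k\sim ck^{r+1}$, $c>0$, the inequality $\chi(F_k)\leq h^0(F_k)+h^2(F_k)=O(k^{r-1})$ still forces $\Delta(E)\geq 0$, because a strict inequality $\Delta(E)<0$ would make $\chi(F_k)$ grow like $k^{r+1}$. With that replacement (and care about the divisibility of $c_1(\Sym^k E)$ when choosing $L_k$, e.g.\ by working with $(E\otimes E^\vee)^{\otimes k}$ instead), your proof goes through.
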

\begin{proof}
See \cite[Theorem 3.4.1]{HL}.
\end{proof}

\begin{theorem}\label{thm2.11}
Assume $E\in\Coh^{\beta H+B}(X)$ is $\nu_{\alpha,\beta}$-semistable,
then $\overline{\Delta}^{B}_H(E)\geq0$.
\end{theorem}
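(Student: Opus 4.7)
The plan is to reduce the inequality to the classical Bogomolov inequality for $\mu_H$-semistable torsion-free sheaves, using the large-volume limit of Proposition \ref{prop1} as the bridge.

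First I would reduce to the case where $E$ is a torsion-free sheaf. Given $E\in\Coh^{\beta H+B}(X)$ tilt-semistable, consider the short exact sequence
$$0\to\mathcal{H}^{-1}(E)[1]\to E\to\mathcal{H}^0(E)\to 0$$
in the heart, with $\mathcal{H}^{-1}(E)\in\mathcal{F}_{\beta H+B}$ and $\mathcal{H}^0(E)\in\mathcal{T}_{\beta H+B}$. Comparing the slopes $\nu_{\alpha,\beta}$ of the sub and quotient, together with the torsion-pair structure, the tilt-semistability of $E$ forces strong restrictions: after passing to a subfactor (or a shift) one is reduced to a single torsion-free sheaf, the torsion contributions behaving non-negatively with respect to $\overline{\Delta}_H^B$ by a direct numerical check.

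Next, for $E$ a tilt-semistable torsion-free sheaf in $\Coh^{\beta H+B}(X)$, I would argue that $E$ is $\nu_{\alpha,\beta_0}$-semistable for all $\alpha$ large enough. The numerical walls $W(F,E)=\{(\alpha,\beta):\nu_{\alpha,\beta}(F)=\nu_{\alpha,\beta}(E)\}$ are either vertical lines or semicircles with centers on the $\beta$-axis and bounded radius (a direct computation from the definition of $\nu_{\alpha,\beta}$ and the shape of $Z_{\alpha,\beta}$). Hence only finitely many of them meet the vertical ray $\{\beta=\beta_0,\,\alpha>\alpha_0\}$, and above all of them $E$ remains tilt-semistable. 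By Proposition \ref{prop1} the object $E$ is then $p_{H,\beta H+B}$-semistable, and in particular $\mu_H$-semistable. The classical Bogomolov inequality gives $\Delta_H(E)\geq 0$, and combining this with the Hodge-index inequality $(H^{n-1}\ch_1^B(E))^2\geq H^n\cdot H^{n-2}(\ch_1^B(E))^2$ yields
$$\overline{\Delta}_H^B(E)=H^n\,\Delta_H(E)+\Bigl[(H^{n-1}\ch_1^B(E))^2-H^n\,H^{n-2}(\ch_1^B(E))^2\Bigr]\geq 0,$$
where we have used the identity $(\ch_1^B)^2-2\ch_0\ch_2^B=\ch_1^2-2\ch_0\ch_2$ to rewrite $H^n\Delta_H$.

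The main obstacle is the wall analysis in the torsion-free sheaf case, specifically verifying that the vertical ray $\beta=\beta_0$ meets only finitely many walls and that $E$ cannot be destabilized at arbitrarily large $\alpha$ by a subobject whose class lives in a peculiar region of the numerical lattice. This is precisely the support-property-type argument carried out in \cite[Appendix B]{BMS}, and it is the reason the proof here defers to that reference rather than being reproved from scratch.
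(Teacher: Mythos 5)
The paper itself does not reprove this statement: it simply cites \cite{BMT} and \cite{BMS} and observes that their argument works in any dimension. Your proposal tries to sketch that argument, but two of its intermediate claims are not correct as stated, and the ingredient that actually makes the BMT/BMS proof work is absent. First, the claim that a tilt-semistable torsion-free sheaf at $(\beta_0,\alpha_0)$ ``remains tilt-semistable above all the walls,'' i.e.\ for all $\alpha\gg0$ on the ray $\beta=\beta_0$, is false. A $\mu_H$-unstable torsion-free sheaf $E$ with slope-destabilizing subsheaf $F$ satisfies $\nu_{\alpha,\beta}(F)<\nu_{\alpha,\beta}(E)$ \emph{inside} the semicircle $W(F,E)$ and $\nu_{\alpha,\beta}(F)>\nu_{\alpha,\beta}(E)$ outside it (Proposition \ref{wall}); such an $E$ can perfectly well be tilt-semistable at a point inside $W(F,E)$ while being destabilized by $F$ for all large $\alpha$. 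So tilt-semistability does not propagate upward, and you cannot reach the large-volume limit of Proposition \ref{prop1} this way. Second, the reduction to a single torsion-free sheaf via $0\to\mathcal{H}^{-1}(E)[1]\to E\to\mathcal{H}^0(E)\to0$ is not justified: genuinely two-term tilt-semistable complexes exist (both cohomologies nonzero), and passing to a ``subfactor'' gives no control on $\overline{\Delta}^B_H(E)$ without a lemma relating the discriminant of $E$ to those of its pieces.

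The missing idea, which is the actual content of the proof in \cite{BMT, BMS}, is an induction over walls combined with a quadratic-form (support-property) inequality: if $E$ is semistable at $(\beta_0,\alpha_0)$ but becomes strictly semistable on the outermost wall above that point, its Jordan--H\"older factors $E_i$ there have aligned central charges and strictly smaller $H^{n-1}\ch_1^{\beta H+B}$, so by induction $\overline{\Delta}^B_H(E_i)\geq0$, and then the key lemma $\overline{\Delta}^B_H(E)\geq\sum_i\overline{\Delta}^B_H(E_i)$ (valid because $\overline{\Delta}^B_H$ is a quadratic form of signature $(1,1)$ on the rank-two lattice spanned by classes with proportional $Z_{\alpha,\beta}$) closes the induction. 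Only the base case of that induction --- objects semistable for $\alpha\gg0$, classified via Proposition \ref{prop1} as slope-semistable sheaves up to codimension-two corrections --- is where your classical Bogomolov-plus-Hodge-index computation enters; that final algebraic identity
$\overline{\Delta}_H^B(E)=H^n\Delta_H(E)+\bigl[(H^{n-1}\ch_1^B(E))^2-H^n\cdot H^{n-2}(\ch_1^B(E))^2\bigr]$
is correct. Without the induction and the additivity lemma, the argument has a genuine gap.
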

\begin{proof}
This inequality was proved in \cite[Theorem 7.3.1]{BMT} and
\cite[Theorem 3.5]{BMS} on threefolds, but their proof works for the
general case.
\end{proof}

\begin{conjecture}[{\cite[Conjecture 4.1]{BMS}}]\label{Conjecture}
Assume $n=3$, $B=0$ and $E\in\Coh^{\beta H}(X)$ is
$\nu_{\alpha,\beta}$-semistable. Then
\begin{equation}\label{BG}
\alpha^2\overline{\Delta}^{\beta H}_H(E)+4\left(H\ch^{\beta
H}_2(E)\right)^2-6H^2\ch^{\beta H}_1(E)\ch^{\beta H}_3(E)\geq0.
\end{equation}
\end{conjecture}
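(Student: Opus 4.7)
My strategy, following the program of Bayer--Macr\`i--Toda, is to recast \eqref{BG} as a positivity statement for a second tilt of $\Coh^{\beta H}(X)$, and then to establish that the resulting pair is itself a weak Bridgeland stability condition. Fix $\alpha>0$, $\beta\in\mathbb{R}$, and a real parameter $s$. Using the $\nu_{\alpha,\beta}$-Harder--Narasimhan filtration on $\Coh^{\beta H}(X)$, form the torsion pair $(\mathcal{T}_s,\mathcal{F}_s)$ in which $\mathcal{T}_s$ is the extension-closure of the $\nu_{\alpha,\beta}$-semistable objects of slope greater than $s$, and tilt to obtain the heart $\mathcal{A}^{\alpha,\beta,s}:=\langle\mathcal{T}_s,\mathcal{F}_s[1]\rangle\subset \D^b(X)$. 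On this heart I would write down a central charge of the form
\[
Z'_{\alpha,\beta,s}(E)=\bigl(-\ch_3^{\beta H}(E)+c_2\,H\ch_2^{\beta H}(E)+c_1\,H^2\ch_1^{\beta H}(E)+c_0\,H^3\ch_0^{\beta H}(E)\bigr)+i\bigl(H\ch_2^{\beta H}(E)-s\,H^2\ch_1^{\beta H}(E)-\tfrac{\alpha^2}{2}H^3\ch_0^{\beta H}(E)\bigr),
\]
where the coefficients $c_0,c_1,c_2$ are chosen so that, for an object $E$ with $\Im Z'(E)=0$ (equivalently $\nu_{\alpha,\beta}(E)=s$), the condition $\Re Z'(E)\le 0$ becomes exactly the inequality \eqref{BG}. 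By construction $\Im Z'\ge 0$ on $\mathcal{A}^{\alpha,\beta,s}$.

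First I would check that $(Z',\mathcal{A}^{\alpha,\beta,s})$ satisfies Harder--Narasimhan and the support property; positivity of the imaginary part is a formal consequence of Theorem \ref{thm2.11} once the first tilt has been fixed. Next I would observe that every $\nu_{\alpha,\beta}$-semistable object $E$ of slope $s$ appears, up to shift, as an object of $\mathcal{A}^{\alpha,\beta,s}$ on which $\Im Z'$ vanishes, so \eqref{BG} for $E$ is literally the statement that $Z'$ cannot take negative real values on such objects. To propagate this to all $(\alpha,\beta,s)$, I would verify the inequality in the large-volume limit $\alpha\to\infty$ via Proposition \ref{prop1}: there the problem reduces to a bound on $\ch_3$ for $p_{H,\beta H}$-semistable coherent sheaves, which I would attempt to obtain by restricting to a smooth surface section $S\in|mH|$, applying the classical Bogomolov--Gieseker inequality there, and using Hirzebruch--Riemann--Roch to recover the cubic term. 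From this base case, I would propagate \eqref{BG} by wall-crossing: if $E$ degenerates at a wall of $\nu_{\alpha,\beta}$-stability to a short exact sequence with factors $A,B$, compatibility of the quadratic form
\[
Q(E):=\alpha^2\overline{\Delta}^{\beta H}_H(E)+4\bigl(H\ch_2^{\beta H}(E)\bigr)^2-6H^2\ch_1^{\beta H}(E)\,\ch_3^{\beta H}(E)
\]
on the wall would then transfer $Q(A),Q(B)\ge 0$ into $Q(E)\ge 0$.

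The hard part will be precisely this last transfer. The form $Q$ is indefinite on the numerical lattice---its ``classical'' summand $\overline{\Delta}^{\beta H}_H$ is itself indefinite and becomes non-negative on tilt-semistables only by Theorem \ref{thm2.11}---so $Q(A)\ge 0$ and $Q(B)\ge 0$ do not formally yield $Q(E)\ge 0$: one needs a genuine Hodge-theoretic input controlling the mixed term $H^2\bigl(\ch_1^{\beta H}(A)\,\ch_3^{\beta H}(B)+\ch_1^{\beta H}(B)\,\ch_3^{\beta H}(A)\bigr)$. No such input is known in general, which is why Conjecture \ref{Conjecture} remains open, and the plan above can therefore only be completed in cases carrying additional geometric structure, such as the Beilinson resolution on $\mathbb{P}^3$ used by Macr\`i, Fourier--Mukai techniques on abelian threefolds, or Brill--Noether-type bounds available for particular Fano threefolds.
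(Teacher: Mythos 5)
The statement you were asked to prove is presented in the paper as a conjecture (Conjecture \ref{Conjecture}, quoted from Bayer--Macr\`i--Stellari), and the paper contains no proof of it: it only records, in Theorem \ref{*}, that the inequality \eqref{BG} is known for $\mathbb{P}^3$, quadric threefolds, abelian threefolds and Fano threefolds of Picard number one, with the proofs delegated entirely to the cited references. Your plan is essentially the Bayer--Macr\`i--Toda program underlying those known cases (second tilt of $\Coh^{\beta H}(X)$, a cubic central charge whose real part encodes \eqref{BG} on objects of vanishing imaginary part, reduction to the large-volume limit, and propagation by wall-crossing), and you correctly isolate the genuine obstruction: the quadratic form $Q$ is indefinite on the numerical lattice, so $Q(A)\ge 0$ and $Q(B)\ge 0$ for the destabilizing factors do not formally yield $Q(E)\ge 0$ without additional geometric input controlling the mixed $\ch_1\cdot\ch_3$ term. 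Your self-assessment that the plan cannot be completed in general is therefore accurate, and no comparison with a proof in the paper is possible, since none is given.

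One correction: the conjecture does not merely ``remain open'' --- it is false as stated. The paper's own remark immediately following Theorem \ref{*} records Schmidt's counterexample on the blowup of $\mathbb{P}^3$ at a point, which is a smooth projective threefold satisfying the hypotheses $n=3$, $B=0$ of the statement. Hence no proof in this generality can exist, and any complete argument must restrict to the particular threefolds of Theorem \ref{*}; your concluding list of case-by-case techniques (the Beilinson resolution on $\mathbb{P}^3$ used by Macr\`i, Fourier--Mukai methods on abelian threefolds, and so on) is the correct description of what is actually provable.
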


Such an inequality provides a way to construct Bridgeland stability
conditions on threefolds, and it was proved to hold in the some
cases:
\begin{theorem}\label{*}
The inequality (\ref{BG}) holds for $\nu_{\alpha,\beta}$-semistable
objects on $\mathbb{P}^3$, quadric threefolds, abelian threefolds
and Fano threefolds of Picard number one.
\end{theorem}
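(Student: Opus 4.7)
Theorem \ref{*} is not proved from scratch in the present paper; it is a compilation of established verifications of Conjecture \ref{Conjecture} in four special settings. Accordingly, my plan would be to attribute each case to its source in the literature and to describe, in broad strokes, the underlying technique, rather than to attempt a unified new argument.

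For $\mathbb{P}^3$ the inequality was proved by Macr\`i. The idea is to exploit the full exceptional collection $\langle \mathcal{O},\mathcal{O}(1),\mathcal{O}(2),\mathcal{O}(3)\rangle$: one shows that after finitely many tilts, any tilt-semistable object sits in a filtration whose factors are shifts of these line bundles (or of a small explicit list of twists), on which the numerical inequality \eqref{BG} can be verified directly by Chern-character computation. The quadric threefold case, due to Schmidt, follows the same strategy using the exceptional collection containing the spinor bundle, while the Fano threefolds of Picard number one are handled by a further refinement of this exceptional-collection approach, combined with a wall-and-chamber analysis for tilt-stability.

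For abelian threefolds, established in \cite{BMS} and independently by Maciocia--Piyaratne, the method is completely different: no exceptional objects are available. Instead one uses the Fourier--Mukai symmetry provided by the Poincar\'e sheaf to interchange $\nu_{\alpha,\beta}$-stability with $\mu_H$-stability of genuine sheaves, and then applies the classical Bogomolov inequality together with a Hodge-index--type argument to upgrade $\overline{\Delta}^{B}_H\geq 0$ to the sharper statement \eqref{BG} involving $\ch_3$.

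The main conceptual obstacle, were one to attempt a genuinely new case, is that tilt-stability alone provides no lower bound on $\ch_3$; the extra input that ultimately forces such a bound is either an exceptional collection (for $\mathbb{P}^3$, the quadric, and Picard-rank-one Fanos) or a Fourier--Mukai symmetry (for abelian varieties). Since no general mechanism of this kind is available, the four cases must at present be treated separately, and the role of the theorem in this paper is purely to supply \eqref{BG} as an input to the $\mathbb{P}^3$ application in Theorem \ref{Chern}.
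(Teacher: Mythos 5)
Your proposal matches the paper exactly: the paper's proof of Theorem \ref{*} consists solely of citations to Macr\`i \cite{Mac2} for $\mathbb{P}^3$, Schmidt \cite{Sch1} for the quadric threefold, Bayer--Macr\`i--Stellari \cite{BMS} for abelian threefolds, and Li \cite{Li} for Fano threefolds of Picard number one. Your additional sketches of the underlying techniques are accurate but go beyond what the paper records.
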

\begin{proof}
Please see \cite{Mac2}, \cite{Sch1}, \cite{BMS} and \cite{Li}.
\end{proof}

\begin{remark}
Recently, Schmidt \cite{Sch2} found a counterexample to Conjecture
\ref{Conjecture} when $X$ is the blowup at a point of
$\mathbb{P}^3$. Therefore, the inequality (\ref{BG}) needs some
modifications in general setting. See \cite{Piy} and \cite{BMSZ} for
the recent progress.
\end{remark}

\section{Types of walls}\label{S3}
In this section, we recall some basic properties of walls for the
weak Bridgeland stability $\sigma_{\alpha,\beta}$ in Theorem
\ref{thm2.7}. They are completely analogous to the case of walls for
Bridgeland stability on surfaces, treated most systematically by Lo
and Qin \cite{LQ} and Maciocia \cite{Maci}. We freely use the
notations in Section \ref{S2}.

\subsection{Numerical walls and actual walls}
Let $\mathbf{v}=(v_0, v_1, v_2)$ and $\mathbf{w}=(w_0, w_1, w_2)$ be
two vectors in $\mathbb{Q}^3$ with
\begin{eqnarray*}
\begin{array}{lcl}
\overline{\Delta}_H^B(\mathbf{v})=v_1^2-2v_0v_2\geq0,  &&
\overline{\Delta}_H^B(\mathbf{w})=w_1^2-2w_0w_2\geq0.
\end{array}
\end{eqnarray*}
Assume that $\mathbf{w}$ does not have the same $\nu_{\alpha,
\beta}$-slope as $\mathbf{v}$ everywhere in the $(\beta,
\alpha)$-half plane $\mathbb{R}\times \mathbb{R}_{>0}$. Here the
$\nu_{\alpha, \beta}$-slope of a vector $\mathbf{e}=(e_0, e_1,
e_2)\in\mathbb{Q}^3$ is
\begin{eqnarray*}
\nu_{\alpha, \beta}(\mathbf{e})= \left\{
\begin{array}{lcl}
+\infty,  & &\mbox{if}~e_1-\beta e_0=0,\\
&&\\
\frac{e_2-\beta e_1+\frac{1}{2}(\beta^2-\alpha^2)e_0}{e_1-\beta
e_0}, & &\mbox{otherwise}.
\end{array}\right.
\end{eqnarray*}

\begin{definition}
The numerical wall $W(\mathbf{w}, \mathbf{v})$ is the set of points
$(\beta, \alpha)$ such that $\mathbf{w}$ and $\mathbf{v}$ have the
same $\nu_{\alpha,\beta}$-slope. A numerical wall is an actual wall
if there exists a point $(\beta,\alpha)\in W(\mathbf{w},
\mathbf{v})$ and two objects $E, F\in \Coh^{\beta H+B}(X)$ with
$\ch_H(E)=\mathbf{v}$, $\ch_H(F)=\mathbf{w}$, such that $F$ is a
subobject of $E$, or $E$ is a quotient of $F$ in $\Coh^{\beta
H+B}(X)$. In this situation, we also write $W(F, E)=W(\mathbf{w},
\mathbf{v})$.
\end{definition}

Our definition of an actual wall $W(F, E)$ is different from the one
used earlier (see, e.g., \cite[Definition 2.1]{Bo}). Here we allow
not only that $F$ is a subobject of $E$ but also that $E$ is a
quotient of $F$ in $\Coh^{\beta H+B}(X)$. It makes the related
results more symmetric.

We will frequently use the following facts about the walls
\cite{Maci}, \cite[Section 2.3]{CH1}, \cite[Theorem 2.2 and Section
2.3]{Bo}:
\begin{proposition}\label{wall}
Keep the above notation.
\begin{itemize}
\item The numerical walls $W(\mathbf{w}, \mathbf{v})$ in the $(\beta,\alpha)$-half plane are disjoint.

\item Let $\mathbf{v}$ and $\mathbf{w}$ have positive rank. If $\mu_{H, B}(\mathbf{v})=\mu_{H,
B}(\mathbf{w})$, i.e., $\frac{v_1}{v_0}=\frac{w_1}{w_0}$, then
$W(\mathbf{w}, \mathbf{v})$ is a line $\beta=\mu_{H,
B}(\mathbf{v})$. If $\mu_{H, B}(\mathbf{v})\neq\mu_{H,
B}(\mathbf{w})$, then $W(\mathbf{w}, \mathbf{v})$ is a semicircle
defined by
$(\beta-s(\mathbf{w},\mathbf{v}))^2+\alpha^2=r^2(\mathbf{w},\mathbf{v})$,
where

\begin{equation}\label{s}
s(\mathbf{w},\mathbf{v})=\frac{1}{2}(\mu_{H, B}(\mathbf{v})+\mu_{H,
B}(\mathbf{w}))-\frac{1}{2}
\frac{\overline{\Delta}^B_H(\mathbf{v})/v_0^2-\overline{\Delta}^B_H(\mathbf{w})/w_0^2}{\mu_{H,
B}(\mathbf{v})-\mu_{H, B}(\mathbf{w})},
\end{equation}

\begin{equation}\label{r}
r^2(\mathbf{w},\mathbf{v})=(s(\mathbf{w},\mathbf{v})-\mu_{H,
B}(\mathbf{v}))^2-\overline{\Delta}_H^B(\mathbf{v})/v_0^2.
\end{equation}
When $r^2(\mathbf{w},\mathbf{v})<0$, the wall is empty.

\item Let $W_1$, $W_2$ be two numerical walls to the left of $\beta=\mu_{H, B}(\mathbf{v})$ with centers $(s_1, 0)$, $(s_2,
0)$. Then $W_1$ is nested inside $W_2$ if and only if $s_1> s_2$.

\item Let $\mathbf{v}$ and $\mathbf{w}$ have positive rank and $\mu_{H, B}(\mathbf{v})> \mu_{H, B}(\mathbf{w})$.
If $\mu_{H, B}(\mathbf{w})>\beta$ or $\mu_{H, B}(\mathbf{v})<\beta$,
then $\nu_{\alpha, \beta}(\mathbf{v})>(<)\nu_{\alpha,
\beta}(\mathbf{w})$ if and only if the point $(\beta, \alpha)$ is
outside (inside) $W(\mathbf{w}, \mathbf{v})$. If $\mu_{H,
B}(\mathbf{v})> \beta>\mu_{H, B}(\mathbf{w})$, then $\nu_{\alpha,
\beta}(\mathbf{v})>(<)\nu_{\alpha, \beta}(\mathbf{w})$ if and only
if the point $(\beta, \alpha)$ is inside (outside) $W(\mathbf{w},
\mathbf{v})$.
\end{itemize}
\end{proposition}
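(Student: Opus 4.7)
The plan is to derive the explicit wall equations (second bullet) first by a direct calculation with the central charge, and then deduce disjointness, nesting, and the slope comparison from this form. Starting from the slope equality $\nu_{\alpha,\beta}(\mathbf{v}) = \nu_{\alpha,\beta}(\mathbf{w})$, I rewrite it as $\Re Z_{\alpha,\beta}(\mathbf{v})\Im Z_{\alpha,\beta}(\mathbf{w}) = \Re Z_{\alpha,\beta}(\mathbf{w})\Im Z_{\alpha,\beta}(\mathbf{v})$ and expand via the explicit formula for $z_{\alpha,\beta}$. A direct computation identifies the coefficient of $\alpha^2$ as $\tfrac{1}{2}(v_0w_1 - v_1w_0)$, which in the positive-rank case vanishes exactly when $\mu_{H,B}(\mathbf{v}) = \mu_{H,B}(\mathbf{w})$; the residual linear-in-$\beta$ equation then yields the vertical line $\beta = \mu_{H,B}(\mathbf{v})$. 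Otherwise, completing the square in $\beta$ produces a circle whose center $s$ and squared radius $r^2$ read off as the formulas \eqref{s} and \eqref{r}, with non-emptiness corresponding to $r^2 \geq 0$.

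For disjointness, assume two distinct circular walls with centers $s_1, s_2$ and radii $r_1, r_2$ share a point $(\beta_0, \alpha_0)$ with $\alpha_0 > 0$. Subtracting the two circle equations and substituting $r_i^2 - (s_i - \mu_{H,B}(\mathbf{v}))^2 = -\overline{\Delta}_H^B(\mathbf{v})/v_0^2$ from \eqref{r} factors the identity as $(s_2 - s_1)(\beta_0 - \mu_{H,B}(\mathbf{v})) = 0$, so either $s_1 = s_2$ (forcing the walls to coincide) or $\beta_0 = \mu_{H,B}(\mathbf{v})$; the latter, plugged back in, forces $\alpha_0^2 = -\overline{\Delta}_H^B(\mathbf{v})/v_0^2 \leq 0$, contradicting $\alpha_0 > 0$. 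The mixed line–circle and line–line configurations are handled analogously. For nesting, set $u = \mu_{H,B}(\mathbf{v}) - s$ so that \eqref{r} reads $u^2 - r^2 = c$ with $c = \overline{\Delta}_H^B(\mathbf{v})/v_0^2 \geq 0$; a short calculation from this identity gives $u_1 u_2 + r_1 r_2 \geq c$, which rules out the external-disjointness inequality $(s_1 - s_2)^2 > (r_1 + r_2)^2$. Combined with disjointness, two walls on the left of $\beta = \mu_{H,B}(\mathbf{v})$ must be nested, and since $u$ is a monotonically increasing function of $r$ under the constraint $u^2 - r^2 = c$, nesting ($r_1 < r_2$) is equivalent to $u_1 < u_2$, i.e., $s_1 > s_2$.

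Finally, for the slope comparison I use the asymptotic $\nu_{\alpha,\beta_0}(\mathbf{u}) \sim -\alpha^2/\bigl(2(\mu_{H,B}(\mathbf{u}) - \beta_0)\bigr)$ as $\alpha \to \infty$, valid whenever $\mathbf{u}$ has positive rank and $\mu_{H,B}(\mathbf{u}) \neq \beta_0$; comparing this for $\mathbf{v}$ and $\mathbf{w}$ in each of the two sign regimes stated in the bullet determines the sign of $\nu(\mathbf{v}) - \nu(\mathbf{w})$ for $\alpha$ large (hence outside the bounded wall), and disjointness propagates this sign to the full exterior while the sign flips on the interior. The main technical obstacle is the nesting argument: spotting the invariant $u^2 - r^2 = c$ and using it simultaneously to rule out external disjointness and establish the monotone correspondence between $s$ and $r$; once the real and imaginary parts of $Z_{\alpha,\beta}$ are expanded, the algebra for the remaining bullets is straightforward.
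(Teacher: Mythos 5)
Your proof is correct and is essentially the standard direct computation; the paper itself offers no argument for this proposition, simply citing Maciocia, Coskun--Huizenga, and Bolognese et al., where exactly this kind of derivation appears. Your cross-multiplication in the first paragraph in fact yields the clean factorization $\nu_{\alpha,\beta}(\mathbf{v})-\nu_{\alpha,\beta}(\mathbf{w})=\dfrac{(\mu_{H,B}(\mathbf{v})-\mu_{H,B}(\mathbf{w}))\bigl[(\beta-s)^2+\alpha^2-r^2\bigr]}{2(\mu_{H,B}(\mathbf{v})-\beta)(\mu_{H,B}(\mathbf{w})-\beta)}$, which makes the fourth bullet (including the sign flip across the wall that you assert via continuity) immediate.
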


Without loss of generality, we now assume $v_0>0$, $w_0>0$, $\mu_{H,
B}(\mathbf{v})> \mu_{H, B}(\mathbf{w})$, and the wall $W(\mathbf{w},
\mathbf{v})$ is a non-empty semicircle.

If $s(\mathbf{w},\mathbf{v})\leq \mu_{H, B}(\mathbf{v})$, i.e.,
\begin{equation}\label{type1+}
\Big(\mu_{H, B}(\mathbf{v})-\mu_{H,B}(\mathbf{w})\Big)^2\geq
\overline{\Delta}^B_H(\mathbf{w})/w^2_0-\overline{\Delta}^B_H(\mathbf{v})/v^2_0,
\end{equation}
since $r(\mathbf{w},\mathbf{v})^2\geq0$, one sees that $\mu_{H,
B}(\mathbf{v})-s(\mathbf{w},\mathbf{v})\geq\sqrt{\overline{\Delta}^B_H(\mathbf{v})}/v_0$.
This and (\ref{s}) imply
\begin{equation*}
\mu_{H, B}(\mathbf{v})-\mu_{H,B}(\mathbf{w})+
\frac{\overline{\Delta}^B_H(\mathbf{v})/v_0^2-\overline{\Delta}^B_H(\mathbf{w})/w_0^2}{\mu_{H,
B}(\mathbf{v})-\mu_{H,
B}(\mathbf{w})}\geq2\sqrt{\overline{\Delta}^B_H(\mathbf{v})}/v_0,
\end{equation*}
i.e.,
$$\Big(\mu_{H, B}(\mathbf{v})-\mu_{H,B}(\mathbf{w})-\sqrt{\overline{\Delta}^B_H(\mathbf{v})}/v_0\Big)^2
\geq \overline{\Delta}^B_H(\mathbf{w})/w_0^2.$$ Hence one obtains
\begin{equation}\label{type1}
\mu_{H, B}(\mathbf{v})-\mu_{H,B}(\mathbf{w})\leq
\sqrt{\overline{\Delta}^B_H(\mathbf{v})}/v_0-\sqrt{\overline{\Delta}^B_H(\mathbf{w})}/w_0
\end{equation}
or
\begin{equation}\label{type2}
\mu_{H, B}(\mathbf{v})-\mu_{H,B}(\mathbf{w})\geq
\sqrt{\overline{\Delta}^B_H(\mathbf{v})}/v_0+\sqrt{\overline{\Delta}^B_H(\mathbf{w})}/w_0
\end{equation}
If $s(\mathbf{w},\mathbf{v})\geq \mu_{H, B}(\mathbf{v})$, a similar
computation gives
\begin{equation}\label{type3}
\mu_{H, B}(\mathbf{v})-\mu_{H,B}(\mathbf{w})\leq
\sqrt{\overline{\Delta}^B_H(\mathbf{w})}/w_0-\sqrt{\overline{\Delta}^B_H(\mathbf{v})}/v_0
\end{equation}

\begin{definition}
The wall $W(\mathbf{w}, \mathbf{v})$ is called of Type 1, if it
satisfies (\ref{type1}). If $W(\mathbf{w}, \mathbf{v})$ satisfies
(\ref{type2}) (respectively, (\ref{type3})), we call it of Type 2
(respectively, Type 3).
\end{definition}

Direct computations show us that the wall of Type 1 lies to the left
of $\beta=\mu_{H, B}(\mathbf{w})<\mu_{H, B}(\mathbf{v})$, the wall
of Type 2 lies between $\beta=\mu_{H, B}(\mathbf{w})$ and
$\beta=\mu_{H, B}(\mathbf{v})$, and the wall of Type 3 is to the
right of $\beta=\mu_{H, B}(\mathbf{v})$ (see Figure \ref{fig1}).

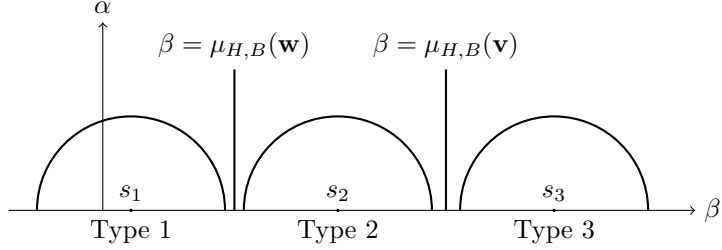
\begin{figure}[ht!]
\begin{center}
\begin{tikzpicture}[scale=1.25]
    \draw[->] (-1,0) -- (6.3,0) node[right] {$\beta$};
    \draw[->] (0,0) -- (0,2) node[above] {$\alpha$};
     \draw[thick] (1.3 cm,0 pt) arc (0:180:1 cm);
     \draw[thick] (3.5 cm,0 pt) arc (0:180:1 cm);
     \draw[thick] (5.8 cm, 0 pt) arc (0: 180: 1 cm);
     \draw[thick] (3.65,0) -- (3.65,1.5) node[above] {$\beta=\mu_{H, B}(\mathbf{v})$};
     \draw[thick] (1.4,0) -- (1.4,1.5) node[above] {$\beta=\mu_{H, B}(\mathbf{w})$};
     \fill (0.3,0) circle (0.5 pt) node[below] {Type 1} node[above] {$s_1$};
     \fill (2.5,0) circle (0.5 pt) node[below] {Type 2} node[above] {$s_2$};
     \fill (4.8,0) circle (0.5 pt) node[below] {Type 3} node[above] {$s_3$};
\end{tikzpicture}
\caption{Three types of walls}\label{fig1}
\end{center}
\end{figure}

\begin{remark}
By the definition of $\Coh^{\beta H+B}(X)$, one sees that an actual
wall can not be of Type 2.
\end{remark}

\subsection{Modifications of walls}
\begin{definition}
Given a vector $\mathbf{u}=(u_0, u_1, u_2)\in \mathbb{Q}^3$ with
$u_0\neq0$, we define its discriminant free vector
$\widetilde{\mathbf{u}}$ to be $(u_0, u_1, u_1^2/2u_0)\in
\mathbb{Q}^3$.
\end{definition}
The motivation behind this definition is that
$\widetilde{\mathbf{u}}$ satisfies
$\overline{\Delta}_H^B(\widetilde{\mathbf{u}})=0$ and $\mu_{H,
B}(\mathbf{u})=\mu_{H, B}(\widetilde{\mathbf{u}})$.

For a numerical wall $W(\mathbf{w}, \mathbf{v})$ of Type 1, we
consider the wall $W(\widetilde{\mathbf{w}}, \mathbf{v})$ to be its
modification. Since
\begin{eqnarray*}
\mu_{H, B}(\mathbf{v})-\mu_{H,B}(\widetilde{\mathbf{w}})&=& \mu_{H,
B}(\mathbf{v})-\mu_{H,B}(\mathbf{w})\\
&\leq&\sqrt{\overline{\Delta}^B_H(\mathbf{v})}/v_0-\sqrt{\overline{\Delta}^B_H(\mathbf{w})}/w_0\\
&\leq&\sqrt{\overline{\Delta}^B_H(\mathbf{v})}/v_0-\sqrt{\overline{\Delta}^B_H(\widetilde{\mathbf{w}})}/w_0,
\end{eqnarray*}
and
\begin{equation*}
\Big(\mu_{H,
B}(\mathbf{v})-\mu_{H,B}(\widetilde{\mathbf{w}})\Big)^2\geq
-\overline{\Delta}^B_H(\mathbf{v})/v^2_0,
\end{equation*}
one sees the wall $W(\widetilde{\mathbf{w}}, \mathbf{v})$ is also of
Type 1. Similarly, if $W(\mathbf{w}, \mathbf{v})$ is of Type 2, then
$W(\widetilde{\mathbf{w}}, \mathbf{v})$, $W(\mathbf{w},
\widetilde{\mathbf{v}})$ and $W(\widetilde{\mathbf{w}},
\widetilde{\mathbf{v}})$ are still of Type 2. If $W(\mathbf{w},
\mathbf{v})$ is of Type 3, then $W(\mathbf{w},
\widetilde{\mathbf{v}})$ is of Type 3.

We can compute the center and radius for the modifications of
$W(\mathbf{w}, \mathbf{v})$ more explicitly. Let $(\widetilde{s}_1,
0)$ and $\widetilde{r}_1$ be the center and radius of the circle
$W(\widetilde{\mathbf{w}}, \mathbf{v})$ of Type 1, respectively.
Equalities (\ref{s}) and (\ref{r}) give
\begin{eqnarray*}
\widetilde{s}_1&=&\frac{1}{2}(\mu_{H, B}(\mathbf{v})+\mu_{H,
B}(\mathbf{w}))-
\frac{\overline{\Delta}^B_H(\mathbf{v})/2v_0^2}{\mu_{H,
B}(\mathbf{v})-\mu_{H, B}(\mathbf{w})},\\
\widetilde{r}_1&=&\frac{\overline{\Delta}^B_H(\mathbf{v})/2v_0^2}{\mu_{H,
B}(\mathbf{v})-\mu_{H, B}(\mathbf{w})}-\frac{1}{2}(\mu_{H,
B}(\mathbf{v})-\mu_{H, B}(\mathbf{w})),\\
\widetilde{s}_1+\widetilde{r}_1&=&\mu_{H, B}(\mathbf{w}),\\
\widetilde{s}_1-\widetilde{r}_1&=&\mu_{H,
B}(\mathbf{v})-\frac{\overline{\Delta}^B_H(\mathbf{v})/v_0^2}{\mu_{H,
B}(\mathbf{v})-\mu_{H, B}(\mathbf{w})}.
\end{eqnarray*}
If $W(\mathbf{w}, \widetilde{\mathbf{v}})$ is of Type 3, we let
$(\widetilde{s}_3, 0)$ and $\widetilde{r}_3$ be its center and
radius, respectively. Similarly, one has
\begin{eqnarray*}
\widetilde{s}_3&=&\frac{1}{2}(\mu_{H, B}(\mathbf{v})+\mu_{H,
B}(\mathbf{w}))+
\frac{\overline{\Delta}^B_H(\mathbf{w})/2w_0^2}{\mu_{H,
B}(\mathbf{v})-\mu_{H, B}(\mathbf{w})},\\
\widetilde{r}_3&=&\frac{\overline{\Delta}^B_H(\mathbf{w})/2w_0^2}{\mu_{H,
B}(\mathbf{v})-\mu_{H, B}(\mathbf{w})}-\frac{1}{2}(\mu_{H,
B}(\mathbf{v})-\mu_{H, B}(\mathbf{w})),\\
\widetilde{s}_3+\widetilde{r}_3&=&\mu_{H,
B}(\mathbf{w})+\frac{\overline{\Delta}^B_H(\mathbf{w})/w_0^2}{\mu_{H,
B}(\mathbf{v})-\mu_{H, B}(\mathbf{w})},\\
\widetilde{s}_3-\widetilde{r}_3&=&\mu_{H, B}(\mathbf{v}).
\end{eqnarray*}
From the above equalities, one sees:
\begin{lemma}
If $W(\mathbf{w}, \mathbf{v})$ is of Type 1 (respectively, 3), then
the semicircle $W(\mathbf{w}, \mathbf{v})$ is inside the semicircle
$W(\widetilde{\mathbf{w}}, \mathbf{v})$ (respectively,
$W(\mathbf{w}, \widetilde{\mathbf{v}})$).
\end{lemma}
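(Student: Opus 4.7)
The plan is to reduce each case to the nesting criterion in Proposition~\ref{wall}, applied to the explicit formulas for $s(\mathbf{w},\mathbf{v})$, $\widetilde{s}_1$ and $\widetilde{s}_3$ that are displayed immediately above the lemma; the whole argument boils down to an elementary arithmetic comparison of centers.

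For Type~1, both $W(\mathbf{w},\mathbf{v})$ and $W(\widetilde{\mathbf{w}},\mathbf{v})$ share the vector $\mathbf{v}$ and, as already noted in the classification of wall types, both lie to the left of $\beta=\mu_{H,B}(\mathbf{v})$. Proposition~\ref{wall} therefore gives $W(\mathbf{w},\mathbf{v})\subset W(\widetilde{\mathbf{w}},\mathbf{v})$ if and only if $s(\mathbf{w},\mathbf{v})\geq \widetilde{s}_1$. Subtracting the two formulas one finds
$$
s(\mathbf{w},\mathbf{v})-\widetilde{s}_1=\frac{\overline{\Delta}^B_H(\mathbf{w})/(2w_0^2)}{\mu_{H,B}(\mathbf{v})-\mu_{H,B}(\mathbf{w})}\geq 0,
$$
using $\mu_{H,B}(\mathbf{v})>\mu_{H,B}(\mathbf{w})$ and $\overline{\Delta}^B_H(\mathbf{w})\geq 0$; equality forces $\mathbf{w}=\widetilde{\mathbf{w}}$, in which case the two walls coincide and the conclusion is trivial.

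For Type~3, both $W(\mathbf{w},\mathbf{v})$ and $W(\mathbf{w},\widetilde{\mathbf{v}})$ share the vector $\mathbf{w}$ and lie to the right of $\beta=\mu_{H,B}(\mathbf{v})$. I would invoke the mirror of Proposition~\ref{wall}: for two numerical walls to the right of $\beta=\mu_{H,B}(\mathbf{w})$ with centers $(s_1,0)$ and $(s_2,0)$, one has $W_1\subset W_2$ iff $s_1<s_2$. This is obtained by interchanging the roles of $\mathbf{v}$ and $\mathbf{w}$ (using the symmetry $W(\mathbf{w},\mathbf{v})=W(\mathbf{v},\mathbf{w})$ of numerical walls), which turns a Type~3 wall for $\mathbf{v}$ into a Type~1 wall for $\mathbf{w}$ and reduces the problem to the case already handled. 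Granting this, an analogous subtraction yields
$$
\widetilde{s}_3-s(\mathbf{w},\mathbf{v})=\frac{\overline{\Delta}^B_H(\mathbf{v})/(2v_0^2)}{\mu_{H,B}(\mathbf{v})-\mu_{H,B}(\mathbf{w})}\geq 0,
$$
so that $W(\mathbf{w},\mathbf{v})\subset W(\mathbf{w},\widetilde{\mathbf{v}})$. The only non-routine step is justifying the mirror nesting criterion on the right of $\beta=\mu_{H,B}(\mathbf{v})$; I expect this to be a clean reflection of the argument for Proposition~\ref{wall}, after which both statements in the lemma become the one-line algebraic identities displayed above.
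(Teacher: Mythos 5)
Your argument is correct and is essentially the paper's: the lemma is read off from the explicit center--radius formulas, and the quantities you compute, $s(\mathbf{w},\mathbf{v})-\widetilde{s}_1=\frac{\overline{\Delta}^B_H(\mathbf{w})/2w_0^2}{\mu_{H,B}(\mathbf{v})-\mu_{H,B}(\mathbf{w})}\geq0$ and $\widetilde{s}_3-s(\mathbf{w},\mathbf{v})=\frac{\overline{\Delta}^B_H(\mathbf{v})/2v_0^2}{\mu_{H,B}(\mathbf{v})-\mu_{H,B}(\mathbf{w})}\geq0$, are exactly the right ones. The one step that does not work as you describe it is the reduction of the Type~3 case to the left-hand nesting criterion by interchanging $\mathbf{v}$ and $\mathbf{w}$: a Type~3 wall lies to the right of $\beta=\mu_{H,B}(\mathbf{v})>\mu_{H,B}(\mathbf{w})$, hence also to the right of $\beta=\mu_{H,B}(\mathbf{w})$, so after the swap the walls are still on the wrong side for the third bullet of Proposition~\ref{wall}. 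The mirror criterion you need is nonetheless true: by (\ref{r}) the endpoints $s\pm r$ of a wall to the right of $\beta=\mu_{H,B}(\mathbf{v})$ are both increasing in $s$, so larger center means larger wall (equivalently, reflect in $\beta=\mu_{H,B}(\mathbf{v})$, which preserves the relation between center and radius). Alternatively, since $W(\mathbf{w},\mathbf{v})$ and $W(\mathbf{w},\widetilde{\mathbf{v}})$ are disjoint numerical walls for $\mathbf{w}$, the inequality $\widetilde{s}_3\geq s$ together with $\widetilde{s}_3-\widetilde{r}_3=\mu_{H,B}(\mathbf{v})\leq s-r$ already rules out every configuration except nesting, which is how the paper's displayed formulas are meant to be used.
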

A similar conclusion holds for walls of Type 2, but we do not need
it in this paper. See Figure \ref{fig2} for the Modifications of
walls of Type 1 and Type 3. One notices that there is a duality
between the walls of Type 1 and Type 3, namely that $W(\mathbf{w},
\mathbf{v})=W(\mathbf{v}, \mathbf{w})$ and
$W(\widetilde{\mathbf{w}}, \mathbf{v})=W(\mathbf{v},
\widetilde{\mathbf{w}})$. This duality will give rise to the duality
between Theorem \ref{main1} and \ref{main2}.
\begin{figure}[ht!]
\begin{center}
\begin{tikzpicture}[scale=1.25]
    \draw[->] (-1.4,0) -- (5.8,0) node[right] {$\beta$};
    \draw[->] (-0.2,0) -- (-0.2,2.2) node[above] {$\alpha$};
     \draw[thick] (1.3 cm, 0 pt) arc (0:180:1 cm);
     \draw[thick] (1.8 cm, 0 pt) arc (0:180:1.4 cm) ;
\fill (0.4,1.4) circle (0.2 pt) node[above]
{$W(\widetilde{\mathbf{w}}, \mathbf{v})$};

\node[above] at (0.3,0.8) {$W(\mathbf{w}, \mathbf{v})$};

\node[above] at (3.8,0.8) {$W(\mathbf{w}, \mathbf{v})$};

\fill (-1,0) circle (0.7 pt) node[below]
{$\widetilde{s}_1-\widetilde{r}_1$};

\draw[thick] (4.8 cm, 0 pt) arc (0: 180: 1 cm);
     \draw[thick] (5.4 cm, 0 pt) arc (0: 180: 1.4 cm);
\fill (4,1.4) circle (0.2 pt) node[above] {$W(\mathbf{w},
\widetilde{\mathbf{v}})$};

\fill (5.4,0) circle (0.7 pt) node[below]
{$\widetilde{s}_3+\widetilde{r}_3$};
     \draw[thick] (2.6,0) -- (2.6,2) node[above] {$\mu_{H, B}(\mathbf{v})$};
     \draw[thick] (1.8,0) -- (1.8,1.5) node[above] {$\mu_{H, B}(\mathbf{w})$};
     \fill (0.3,0) circle (0.5 pt) node[below] {Type 1} ;
     \fill (3.8,0) circle (0.5 pt) node[below] {Type 3} ;
\end{tikzpicture}
\caption{Modifications of walls}\label{fig2}
\end{center}
\end{figure}
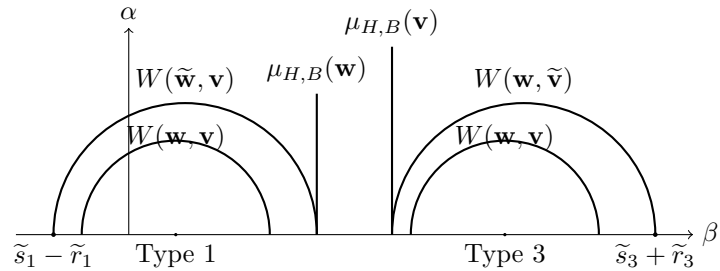

\section{Extremal ellipses}\label{S4}
Throughout this section, we let $E\neq0$ be a torsion free sheaf on
$X$ with $\ch_H(E)=\mathbf{v}=(v_0, v_1, v_2)$ and
$\overline{\Delta}_H^B(\mathbf{v})\geq0$. We will define the
extremal ellipse $C_E$ for such $E$. It can bound the rank of the
subobject or quotient of $E$. We keep the same notations as that in
the previous sections.

\subsection{Extremal ellipses}
The following lemmas are our main tools to study the tilt-stability
of $E$ and $E[1]$. They can be considered as a generalization of
\cite[Lemma 3.1]{Sun}. A similar result on the rank of a
destabilizing subobject is given in \cite[Lemma 7.2]{MS}.

\begin{lemma}\label{lemma4.1}
Let $F$ be the $\nu_{\alpha,\beta}$-maximal subobject of $E\in
\Coh^{\beta H+B}(X)$ for some $(\beta,\alpha)\in \mathbb{R}\times
\mathbb{R}_{>0}$. If
\begin{equation}\label{el}
v_0\big(\beta-\mu_{H,
B}(E)\big)^2+\big(v_0+H^n\big)\alpha^2\geq\frac{v_0+H^n}{v_0H^n}\overline{\Delta}_H^B(E),
\end{equation}
then $\rank(F)\leq\rank(E)$.
\end{lemma}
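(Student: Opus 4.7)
The plan is to argue by contradiction. Assume $\rank(F) > \rank(E)$, equivalently $w_0 \geq v_0 + H^n$ with $w_0 = H^n \rank(F)$ and $v_0 = H^n \rank(E)$; I will show that $(\beta,\alpha)$ must lie strictly inside the ellipse defined by \eqref{el}. If $F = E$ there is nothing to prove, so assume $F \neq E$, which forces the strict inequality $\nu_{\alpha,\beta}(F) > \nu_{\alpha,\beta}(E)$ by the Harder--Narasimhan property. Since $E$ is a torsion-free sheaf in $\Coh^{\beta H+B}(X)$, the long exact cohomology sequence of $0 \to F \to E \to E/F \to 0$ in the heart forces $\mathcal{H}^{-1}(F) = 0$, so $F$ is a genuine sheaf lying in $\mathcal{T}_{\beta H+B}$. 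The torsion case $\ch_0(F) = 0$ would give $\rank(F) = 0$, contradicting the assumption, so $\ch_0(F) > 0$, and then $L := \mu_{H,B}(F) - \beta > 0$ and $M := \mu_{H,B}(E) - \beta > 0$.

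The key numerical inequality comes from combining tilt-stability with the Bogomolov--Gieseker inequality. Being $\nu_{\alpha,\beta}$-semistable as the maximal subobject, $F$ satisfies $\overline{\Delta}_H^B(F) \geq 0$ by Theorem \ref{thm2.11}, equivalently $w_2 \leq w_1^2/(2w_0)$. Substituting this upper bound on $w_2$ into the cross-multiplied strict inequality $\nu_{\alpha,\beta}(F) > \nu_{\alpha,\beta}(E)$ and using $2v_0 v_2 = v_1^2 - \overline{\Delta}_H^B(E)$ to eliminate $v_2$, a direct manipulation produces
\[
L \cdot \overline{\Delta}_H^B(E) > v_0^2 (M - L)(\alpha^2 + LM).
\]
A second constraint comes from $E/F \in \Coh^{\beta H+B}(X)$: non-negativity of $\Im Z_{\alpha,\beta}(E/F)$ rearranges to $w_0 L \leq v_0 M$, hence $L \leq v_0 M / w_0$.

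The final step is a monotonicity argument. The function $f(L) = (M-L)(\alpha^2 + LM)/L$ is strictly decreasing on $(0,M]$, since $f'(L) = -M(\alpha^2/L^2 + 1) < 0$, so combining $L \leq v_0 M / w_0$ with the displayed inequality gives
\[
\overline{\Delta}_H^B(E) > v_0^2 f(L) \geq v_0^2 f(v_0 M/w_0) = v_0(w_0 - v_0)\alpha^2 + \frac{v_0^2 (w_0 - v_0)}{w_0}\,M^2.
\]
The assumption $w_0 \geq v_0 + H^n$ yields both $w_0 - v_0 \geq H^n$ and $(w_0 - v_0)/w_0 \geq H^n/(v_0 + H^n)$, so the right-hand side is at least $v_0 H^n \alpha^2 + v_0^2 H^n M^2/(v_0 + H^n)$; substituting $M = \mu_{H,B}(E) - \beta$ and rearranging gives the strict reversal of \eqref{el}, the desired contradiction. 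The main obstacle will be the algebraic bookkeeping in deriving the displayed key inequality, where one must carefully track signs and positivity of denominators when eliminating $w_2$ and $v_2$ from the two hypotheses.
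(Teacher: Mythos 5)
Your proof is correct and uses exactly the same three ingredients as the paper's argument: the destabilizing inequality $\nu_{\alpha,\beta}(F)>\nu_{\alpha,\beta}(E)$, the Bogomolov--Gieseker inequality $\overline{\Delta}_H^B(F)\geq 0$ for the semistable subobject $F$, and the heart constraint $0<H^{n-1}\ch_1^{\beta H+B}(F)\leq H^{n-1}\ch_1^{\beta H+B}(E)$ (your $w_0L\leq v_0M$). The only difference is organizational --- the paper solves the resulting quadratic directly for $H^n\ch_0(F)$ and checks that the bound is at most $H^n\ch_0(E)+H^n$ precisely when \eqref{el} holds, whereas you run the contrapositive via monotonicity of $f(L)$ --- so this is essentially the same proof.
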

\begin{proof}
The idea of the proof is rather simple: we firstly combine the
Bogomolov-Gieseker inequality for $\rank F$, $\ch_1(F)$, and
$\ch_2(F)$ and the inequality
$\nu_{\alpha,\beta}(F)>\nu_{\alpha,\beta}(E)$ to give an upper bound
of $\rank F$. Then comparing this bound with $\rank E$, one
concludes the lemma.

Now we show the details. By the long exact sequence of cohomology
sheaves induced by the short exact sequence
$$0\rightarrow F\rightarrow E\rightarrow
Q\rightarrow0$$ in $\Coh^{\beta H+B}(X)$, one sees that $F$ is a
torsion free sheaf. If $E$ is $\nu_{\alpha,\beta}$-semistable, then
$F=E$. Thus we are done.

Now we assume that $E$ is not $\nu_{\alpha,\beta}$-semistable. One
deduces

$$\nu_{\alpha,\beta}(F)=\frac{H^{n-2}\ch^{\beta H+B}_2(F)-\frac{1}{2}\alpha^2H^n\ch_0(F)}{H^{n-1}\ch^{\beta H+B}_1(F)}
>\nu_{\alpha,\beta}(E),$$
i.e.,
\begin{equation}\label{eq4.1}
H^{n-2}\ch^{\beta H+B}_2(F)>\nu_{\alpha,\beta}(E)H^{n-1}\ch^{\beta
H+B}_1(F)+\frac{1}{2}\alpha^2H^n\ch_0(F).
\end{equation}
By Theorem \ref{thm2.11}, we obtain
\begin{equation}\label{eq4.2}
\frac{\left(H^{n-1}\ch^{\beta H+B}_1(F)\right)^2}{2H^n\ch_0(F)}\geq
H^{n-2}\ch^{\beta H+B}_2(F).
\end{equation}
Combining (\ref{eq4.1}) and (\ref{eq4.2}), one sees that
$$\alpha^2\left(H^n\ch_0(F)\right)^2+2\nu_{\alpha,\beta}(E)H^{n-1}\ch^{\beta H+B}_1(F)H^n\ch_0(F)<\left(H^{n-1}\ch^{\beta H+B}_1(F)\right)^2.$$
This implies
\begin{equation}\label{eq4.3}
H^n\ch_0(F)<\left(\sqrt{(\nu_{\alpha,\beta}(E))^2+\alpha^2}-\nu_{\alpha,\beta}(E)\right)\frac{H^{n-1}\ch^{\beta
H+B}_1(F)}{\alpha^2}.
\end{equation}
Since $F$ is a subobject of $E$ in $\Coh^{\beta H+B}(X)$, let $E/F$
be the quotient object in $\Coh^{\beta H+B}(X)$. By the definition
of $\Coh^{\beta H+B}(X)$, we deduce that $\mu^-_{H, \beta
H+B}(F)>0$, $\mu^-_{H, \beta H+B}(E)>0$, $\mu^-_{H, \beta
H+B}(\mathcal{H}^0(E/F))>0$ and $\mu^+_{H, \beta
H+B}(\mathcal{H}^{-1}(E/F))\leq0$. These imply
$$0<H^{n-1}\ch^{\beta H+B}_1(F)\leq H^{n-1}\ch^{\beta H+B}_1(E)=v_1-\beta v_0.$$
From (\ref{eq4.3}), it follows that

\begin{equation}\label{eq4.4}
H^n\ch_0(F)<\left(\sqrt{(\nu_{\alpha,\beta}(E))^2+\alpha^2}-\nu_{\alpha,\beta}(E)\right)\frac{H^{n-1}\ch^{\beta
H+B}_1(E)}{\alpha^2}.
\end{equation}
Hence $\rank(F)\leq\rank(E)$, if one can show that
\begin{equation*}
\left(\sqrt{(\nu_{\alpha,\beta}(E))^2+\alpha^2}-\nu_{\alpha,\beta}(E)\right)\frac{H^{n-1}\ch^{\beta
H+B}_1(E)}{\alpha^2}\leq H^n\ch_0(E)+H^n,
\end{equation*}
i.e.,
\begin{equation}\label{eq4.5}
(\nu_{\alpha,\beta}(E))^2+\alpha^2\leq
\Big(\frac{\alpha^2H^n(\ch_0(E)+1)}{H^{n-1}\ch^{\beta
H+B}_1(E)}+\nu_{\alpha,\beta}(E)\Big)^2.
\end{equation}

On the other hand, a direct computation shows that inequality
(\ref{eq4.5}) is equivalent to
\begin{equation}\label{eq4.6}
(\ch_0(E)+1)H^n\alpha^2+2H^{n-2}\ch_2^{\beta
H+B}(E)\geq\frac{\overline{\Delta}_H^{\beta
H+B}(E)}{H^n}=\frac{\overline{\Delta}_H^{B}(E)}{H^n}.
\end{equation}
Expanding $\ch_2^{\beta H+B}(E)$, one sees that inequality
(\ref{eq4.6}) is equivalent to our assumption (\ref{el}). Thus the
lemma follows.
\end{proof}

The dual result holds for $E[1]$:
\begin{lemma}\label{lemma4.2}
Let $F[1]$ be the $\nu_{\alpha,\beta}$-minimal quotient of $E[1]\in
\Coh^{\beta H+B}(X)$ for some $(\beta,\alpha)\in \mathbb{R}\times
\mathbb{R}_{>0}$. If (\ref{el}) holds, i.e.,
\begin{equation*}
v_0\big(\beta-\mu_{H,
B}(E)\big)^2+\big(v_0+H^n\big)\alpha^2\geq\frac{v_0+H^n}{v_0H^n}\overline{\Delta}_H^B(E),
\end{equation*}
then $\rank(F)\leq\rank(E)$.
\end{lemma}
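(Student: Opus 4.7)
The plan is to dualize the proof of Lemma~\ref{lemma4.1} via the shift functor $[1]$. If $E[1]$ is already $\nu_{\alpha,\beta}$-semistable then the minimal quotient equals $E[1]$ and the claim is trivial, so I assume this is not the case. From the short exact sequence $0\to K\to E[1]\to F[1]\to 0$ in $\Coh^{\beta H+B}(X)$ and the associated long exact sequence of cohomology sheaves, using that $E$ is torsion free and $\mathcal{H}^0(E[1])=0$, one sees that $F$ is a sheaf lying in $\mathcal{F}_{\beta H+B}$; in particular $H^{n-1}\ch_1^{\beta H+B}(F)\le 0$.

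The two inputs I would feed into the computation are: (i) the strict slope inequality $\nu_{\alpha,\beta}(F[1])<\nu_{\alpha,\beta}(E[1])$ coming from HN-minimality, which after substituting $\ch(F[1])=-\ch(F)$ and clearing the positive denominator $H^{n-1}\ch_1^{\beta H+B}(F[1])>0$ yields exactly the analog of \eqref{eq4.1}; and (ii) the Bogomolov-Gieseker inequality of Theorem~\ref{thm2.11} applied to the semistable object $F[1]$, namely $\overline{\Delta}_H^B(F)=\overline{\Delta}_H^B(F[1])\ge 0$, which reproduces \eqref{eq4.2}. Combining them yields a quadratic inequality in $H^n\ch_0(F)$. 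Writing $\ell:=H^{n-1}\ch_1^{\beta H+B}(F)\le 0$ and $\nu:=\nu_{\alpha,\beta}(E[1])$, solving this quadratic gives the analog of \eqref{eq4.3}:
$$H^n\ch_0(F)<\bigl(\sqrt{\nu^2+\alpha^2}+\nu\bigr)\frac{|\ell|}{\alpha^2}.$$
Since $F[1]$ is a quotient of $E[1]$ in the heart, $|\ell|\le|H^{n-1}\ch_1^{\beta H+B}(E)|=\beta v_0-v_1$, giving the counterpart of \eqref{eq4.4}.

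The conclusion $\rank F\le\rank E$ then follows once one verifies that \eqref{el} is equivalent to the assertion that the above right-hand side is $\le v_0+H^n$, exactly as \eqref{el} was shown equivalent to \eqref{eq4.6} in the previous proof. The principal obstacle will be sign bookkeeping: although $\nu$ and $\ell$ each change sign relative to the quantities appearing in Lemma~\ref{lemma4.1}, the squared quantity $\ell^2$ and the product $\nu\cdot\ell$, which together govern the polynomial that arises after squaring to eliminate the square root, coincide with those of the original proof when re-expressed in terms of $v_0$, $v_1$, $\beta$, $\alpha$ and $\overline{\Delta}_H^B(E)$. Hence the same identity \eqref{eq4.6} emerges, and its equivalence with \eqref{el} carries over verbatim.
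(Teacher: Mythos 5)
Your proposal is correct and follows essentially the same route as the paper: the paper's proof of Lemma \ref{lemma4.2} is precisely the sign-adjusted rerun of Lemma \ref{lemma4.1}, using that $F\in\mathcal{F}_{\beta H+B}$ forces $H^{n-1}\ch_1^{\beta H+B}(F)<0$, reusing (\ref{eq4.1})--(\ref{eq4.2}), and arriving at the bound $H^n\ch_0(F)<-\bigl(\sqrt{\nu^2+\alpha^2}+\nu\bigr)H^{n-1}\ch_1^{\beta H+B}(E)/\alpha^2$, whose comparison with $v_0+H^n$ reduces to (\ref{eq4.6}) and hence to (\ref{el}). Your sign bookkeeping (the product $\nu\cdot\ell$ and $\ell^2$ being what survive) is exactly how the computation closes, so there is nothing to add.
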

\begin{proof}
The proof follows in the same way as that of Lemma \ref{lemma4.1}.

We assume that $E[1]$ is not $\nu_{\alpha,\beta}$-semistable. In
this case, one sees that $F$ is a torsion free sheaf with
$$H^{n-1}\ch^{\beta H+B}_1(E)\leq H^{n-1}\ch^{\beta H+B}_1(F)<0$$ and
$\nu_{\alpha,\beta}(E[1])>\nu_{\alpha,\beta}(F[1])$. One can still
obtain (\ref{eq4.1}) and (\ref{eq4.2}). Since $$H^{n-1}\ch^{\beta
H+B}_1(F)<0,$$ (\ref{eq4.3}) becomes
\begin{equation*}
H^n\ch_0(F)<-\left(\sqrt{(\nu_{\alpha,\beta}(E))^2+\alpha^2}+\nu_{\alpha,\beta}(E)\right)\frac{H^{n-1}\ch^{\beta
H+B}_1(F)}{\alpha^2}.
\end{equation*}
This implies
\begin{equation*}
H^n\ch_0(F)<-\left(\sqrt{(\nu_{\alpha,\beta}(E))^2+\alpha^2}+\nu_{\alpha,\beta}(E)\right)\frac{H^{n-1}\ch^{\beta
H+B}_1(E)}{\alpha^2}.
\end{equation*}
Therefore Lemma \ref{lemma4.2} follows, if
\begin{equation*}
-\left(\sqrt{(\nu_{\alpha,\beta}(E))^2+\alpha^2}+\nu_{\alpha,\beta}(E)\right)\frac{H^{n-1}\ch^{\beta
H+B}_1(E)}{\alpha^2}\leq H^n\ch_0(E)+H^n.
\end{equation*}
A direct computation shows that the above inequality is equivalent
to (\ref{el}) in the situation of this lemma. Hence we are done.
\end{proof}

\begin{definition}
We call the curve in the $(\beta, \alpha)$ half plane defined by the
equality of (\ref{el}), i.e.,
\begin{equation}\label{ellipse}
v_0\big(\beta-\mu_{H,
B}(E)\big)^2+\big(v_0+H^n\big)\alpha^2=\frac{v_0+H^n}{v_0H^n}\overline{\Delta}_H^B(E)
\end{equation}
the extremal ellipse of $E$, and denote it by $C_E$.
\end{definition}

\subsection{The intersection of the wall and the extremal ellipse}
Let $\mathbf{w}=(w_0, w_1, w_2)\in\mathbb{Q}^3$ be a vector with
$w_0>0$ and $\overline{\Delta}_H^B(\mathbf{w})=w_1^2-2w_0w_2\geq0$.

\begin{lemma}\label{lemma4.4}
Assume that $\mu_{H, B}(E)>\mu_{H, B}(\mathbf{w})$ and the wall
$W(\mathbf{w},\mathbf{v})$ is of Type 1. Let
$W(\widetilde{\mathbf{w}},\mathbf{v})$ be the modification of
$W(\mathbf{w},\mathbf{v})$.  Then $C_E\cap
W(\widetilde{\mathbf{w}},\mathbf{v})\neq\emptyset$ if and only if
$$\mu_{H, B}(\mathbf{w})>\mu_{H, B}(E)-\frac{1}{H^n\rank E}\sqrt{\frac{\overline{\Delta}_H^B(E)}{\rank E+1}}.$$
\end{lemma}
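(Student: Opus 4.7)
I would solve the intersection by eliminating $\alpha^{2}$ from the defining equations of $C_{E}$ and $W(\widetilde{\mathbf{w}},\mathbf{v})$. Write $A=v_{0}=H^{n}\rank E$, $d=\mu_{H,B}(E)-\mu_{H,B}(\mathbf{w})>0$ and $\Delta=\overline{\Delta}_{H}^{B}(E)$, and introduce the shifted coordinate $x=\beta-\mu_{H,B}(E)$. Substituting $v_{0}=A$ in (\ref{ellipse}) puts the ellipse in the compact form
\begin{equation*}
\frac{\rank E}{\rank E+1}\,x^{2}+\alpha^{2}=\frac{\Delta}{A\,H^{n}},
\end{equation*}
while, via the identities $\widetilde{s}_{1}+\widetilde{r}_{1}=\mu_{H,B}(\mathbf{w})$ and $\widetilde{s}_{1}-\widetilde{r}_{1}=\mu_{H,B}(E)-\Delta/(A^{2}d)$ recorded in Section \ref{S3}, the modified wall becomes $(x-x_{1})^{2}+\alpha^{2}=\widetilde{r}_{1}^{2}$ with $x_{1}=-d/2-\Delta/(2A^{2}d)$ and $\widetilde{r}_{1}=(\Delta-A^{2}d^{2})/(2A^{2}d)$. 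Note that the Type 1 hypothesis already forces $\widetilde{r}_{1}^{2}>0$, i.e.\ $A^{2}d^{2}<\Delta$.

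Next I would subtract the two equations to eliminate $\alpha^{2}$; the result is a quadratic in $x$, and a direct expansion shows that its discriminant simplifies to the perfect square $(A^{2}d^{2}-\Delta)^{2}/(4A^{4}d^{2})$. Consequently the two real roots are
\begin{equation*}
x^{\pm}=(\rank E+1)(x_{1}\pm\widetilde{r}_{1}),
\end{equation*}
which by the two identities above collapse to the clean expressions $x^{+}=-(\rank E+1)d$ and $x^{-}=-(\rank E+1)\Delta/(A^{2}d)$.

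Plugging $x^{+}$ back into the ellipse equation gives $\alpha^{2}=\Delta/(A\,H^{n})-\rank E\,(\rank E+1)\,d^{2}$, which is strictly positive precisely when $d<\tfrac{1}{H^{n}\rank E}\sqrt{\Delta/(\rank E+1)}$; this is exactly the claimed inequality after rearrangement. In contrast, $x^{-}$ yields $\alpha^{2}\geq 0$ only when $d\geq\sqrt{(\rank E+1)\Delta}/(H^{n}\rank E)$, which exceeds $\sqrt{\Delta}/A$ for $\rank E\geq 1$ and so is incompatible with the a priori bound $\widetilde{r}_{1}^{2}>0$. Hence the intersection $C_{E}\cap W(\widetilde{\mathbf{w}},\mathbf{v})$ in $\mathbb{R}\times\mathbb{R}_{>0}$ is non-empty exactly when the stated inequality on $\mu_{H,B}(\mathbf{w})$ holds.

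\textbf{Main obstacle.} The only non-routine step is the algebraic identity that the discriminant of the eliminated quadratic collapses to the perfect square $(A^{2}d^{2}-\Delta)^{2}/(4A^{4}d^{2})$; once that identity is in hand, the two roots factor cleanly via the modification formulae of Section \ref{S3}, and choosing the admissible root is forced by comparing the positivity bound on $\alpha^{2}$ against the Type 1 constraint $A^{2}d^{2}<\Delta$.
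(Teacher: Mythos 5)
Your computation is correct and follows essentially the same route as the paper: eliminate $\alpha$ between the two conics, observe that the discriminant of the resulting quadratic is a perfect square (the paper gets $\delta=4\widetilde r_1^2$, matching your expression up to the usual factor of $4$), and identify the roots as $\beta_\pm=\mu_{H,B}(E)+(\rank E+1)(x_1\pm\widetilde r_1)$. Your final test (positivity of $\alpha^2$ at $x^+$, with $x^-$ ruled out by the Type~1 bound $A^2d^2<\Delta$) is algebraically equivalent to the paper's criterion $\beta_+>\widetilde s_1-\widetilde r_1$, so the argument goes through as written.
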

\begin{proof}
Recall that $W(\widetilde{\mathbf{w}},\mathbf{v})$ is defined by
$(\beta-\widetilde{s}_1)^2+\alpha^2=\widetilde{r}_1^2$, where
\begin{eqnarray}
\widetilde{s}_1&=&\frac{1}{2}(\mu_{H, B}(\mathbf{v})+\mu_{H,
B}(\mathbf{w}))-
\frac{\overline{\Delta}^B_H(\mathbf{v})/2v_0^2}{\mu_{H,
B}(\mathbf{v})-\mu_{H, B}(\mathbf{w})},\label{s1}\\
\widetilde{r}_1&=&\frac{\overline{\Delta}^B_H(\mathbf{v})/2v_0^2}{\mu_{H,
B}(\mathbf{v})-\mu_{H, B}(\mathbf{w})}-\frac{1}{2}(\mu_{H,
B}(\mathbf{v})-\mu_{H, B}(\mathbf{w})).\label{r1}
\end{eqnarray}
Eliminating $\alpha$ from the equations of $C_E$ and
$W(\widetilde{\mathbf{w}},\mathbf{v})$, one obtains
$$(\beta-\widetilde{s}_1)^2-\frac{v_0}{v_0+H^n}\big(\beta-\mu_{H, B}(E)\big)^2=\widetilde{r}_1^2-\frac{\overline{\Delta}_H^B(E)}{v_0H^n},$$
i.e.,
\begin{eqnarray}\label{quad}
\frac{H^n}{v_0+H^n}\beta^2&+&2\Big(\frac{\mu_{H,B}(E)v_0}{v_0+H^n}-\widetilde{s}_1\Big)\beta +\widetilde{s}_1^2\nonumber\\
 &-&\frac{(\mu_{H,B}(E))^2v_0}{v_0+H^n}-
\widetilde{r}_1^2+\frac{\overline{\Delta}_H^B(E)}{v_0H^n}=0.
\end{eqnarray}
We consider (\ref{quad}) to be a quadratic equation with variable
$\beta$. Let $\delta$ be its discriminant. Then one has
\begin{eqnarray*}
\frac{1}{4}\delta&=&\Big(\frac{\mu_{H,B}(E)v_0}{v_0+H^n}-\widetilde{s}_1
\Big)^2-\frac{H^n}{v_0+H^n}\Big(\widetilde{s}_1^2-\frac{(\mu_{H,B}(E))^2v_0}{v_0+H^n}-
\widetilde{r}_1^2+\frac{\overline{\Delta}_H^B(E)}{v_0H^n} \Big)\\
&=&\frac{v_0}{v_0+H^n}\Big((\mu_{H,B}(E))^2+
\widetilde{s}_1^2-2\mu_{H,B}(E)\widetilde{s}_1\Big)+\frac{H^n\widetilde{r}_1^2-\overline{\Delta}_H^B(E)/v_0}{v_0+H^n}\\
&=&\frac{v_0}{v_0+H^n}\Big(\mu_{H,B}(E)-\widetilde{s}_1\Big)^2+\frac{H^n\widetilde{r}_1^2}{v_0+H^n}-
\frac{\overline{\Delta}_H^B(E)/v_0}{v_0+H^n}.
\end{eqnarray*}
Since
$(\mu_{H,B}(E)-\widetilde{s}_1)^2=\widetilde{r}^2_1+\overline{\Delta}_H^B(E)/v^2_0$,
one sees $ \delta=4\widetilde{r}_1^2$. Thus the two solutions of the
quadratic equation (\ref{quad}) are
$$\beta_{\pm}=\frac{v_0+H^n}{H^n}(\widetilde{s}_1\pm\widetilde{r}_1)-\frac{v_0}{H^n}\mu_{H, B}(E).$$
Since the wall $W(\widetilde{\mathbf{w}},\mathbf{v})$ is of Type 1,
one deduces
\begin{eqnarray*}
\beta_{-}-(\widetilde{s}_1-\widetilde{r}_1)=\frac{v_0}{H^n}\big(\widetilde{s}_1-\widetilde{r}_1-\mu_{H,B}(E)\big)<0
\end{eqnarray*}
and
\begin{eqnarray*}
\beta_{+}-(\widetilde{s}_1+\widetilde{r}_1)=\frac{v_0}{H^n}\big(\widetilde{s}_1+\widetilde{r}_1-\mu_{H,B}(E)\big)<0.
\end{eqnarray*}
These imply that $C_E\cap
W(\widetilde{\mathbf{w}},\mathbf{v})\neq\emptyset$ if and only if
$\beta_{+}>\widetilde{s}_1-\widetilde{r}_1$ (see Figure \ref{fig3}).

On the other hand, one has
\begin{eqnarray*}
\beta_{+}-(\widetilde{s}_1-\widetilde{r}_1)&=&\frac{v_0}{H^n}\widetilde{s}_1+\frac{v_0+2H^n}{H^n}\widetilde{r}_1-\frac{v_0}{H^n}\mu_{H,
B}(E)\\
&=&\frac{v_0}{H^n}(\widetilde{s}_1+\widetilde{r}_1-\mu_{H,
B}(E))+2\widetilde{r}_1\\
&=&(\frac{v_0}{H^n}+1)\big(\mu_{H, B}(\mathbf{w})-\mu_{H,
B}(E)\big)+\frac{\overline{\Delta}^B_H(E)/v_0^2}{\mu_{H,
B}(E)-\mu_{H, B}(\mathbf{w})}.
\end{eqnarray*}
Therefore $\beta_{+}>\widetilde{s}_1-\widetilde{r}_1$ if and only if
$$\mu_{H, B}(\mathbf{w})>\mu_{H, B}(E)-\frac{1}{H^n\rank
E}\sqrt{\frac{\overline{\Delta}_H^B(E)}{\rank E+1}}.$$ This
completes the proof.
\end{proof}

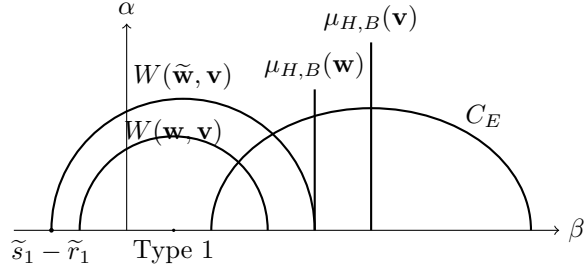
\begin{figure}[ht!]
\begin{center}
\begin{tikzpicture}[scale=1.25]
    \draw[->] (-1.4,0) -- (4.4,0) node[right] {$\beta$};
    \draw[->] (-0.2,0) -- (-0.2,2.2) node[above] {$\alpha$};
     \draw[thick] (1.3 cm, 0 pt) arc (0:180:1 cm);
     \draw[thick] (1.8 cm, 0 pt) arc (0:180:1.4 cm) ;
\fill (0.4,1.4) circle (0.2 pt) node[above]
{$W(\widetilde{\mathbf{w}}, \mathbf{v})$};

\fill (-1,0) circle (0.7 pt) node[below]
{$\widetilde{s}_1-\widetilde{r}_1$};

     \draw[thick] (2.4,0) -- (2.4,2) node[above] {$\mu_{H, B}(\mathbf{v})$};
     \draw[thick] (1.8,0) -- (1.8,1.5) node[above] {$\mu_{H, B}(\mathbf{w})$};
     \fill (0.3,0) circle (0.5 pt) node[below] {Type 1} ;

     \draw[thick] (4.1 cm, 0 pt) arc (0:180:1.7 cm and 1.3 cm);
     \node[above] at (3.6, 1) {$C_E$};
     \node[above] at (0.3, 0.8) {$W(\mathbf{w}, \mathbf{v})$};
\end{tikzpicture}
\caption{Intersection of $W(\widetilde{\mathbf{w}}, \mathbf{v})$ and
$C_E$}\label{fig3}
\end{center}
\end{figure}

For walls of Type 3, we have a similar lemma:
\begin{lemma}\label{lemma4.5}
Assume that $\mu_{H, B}(E)<\mu_{H, B}(\mathbf{w})$ and the wall
$W(\mathbf{v},\mathbf{w})$ is of Type 3. Let
$W(\mathbf{v},\widetilde{\mathbf{w}})$ be the modification of
$W(\mathbf{v},\mathbf{w})$.  Then $C_E\cap
W(\mathbf{v},\widetilde{\mathbf{w}})\neq\emptyset$ if and only if
$$\mu_{H, B}(\mathbf{w})<\mu_{H, B}(E)+\frac{1}{H^n\rank E}\sqrt{\frac{\overline{\Delta}_H^B(E)}{\rank E+1}}.$$
\end{lemma}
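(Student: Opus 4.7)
The plan is to mirror the proof of Lemma \ref{lemma4.4} almost verbatim, exploiting the reflective symmetry between Type 1 and Type 3 walls with respect to the vertical line $\beta = \mu_{H,B}(\mathbf{v})$. The only care needed is with signs: the convention $\mu_{H,B}(\mathbf{v}) > \mu_{H,B}(\mathbf{w})$ used in Section \ref{S3} is now reversed, so the denominator $\mu_{H,B}(\mathbf{v}) - \mu_{H,B}(\mathbf{w})$ changes sign throughout.

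First I would write down the center $(\widetilde{s}_3, 0)$ and radius $\widetilde{r}_3$ of the modified wall $W(\mathbf{v}, \widetilde{\mathbf{w}})$ directly from (\ref{s}) and (\ref{r}), using $\overline{\Delta}_H^B(\widetilde{\mathbf{w}}) = 0$ and $\mu_{H,B}(\widetilde{\mathbf{w}}) = \mu_{H,B}(\mathbf{w})$. The key identities are $\widetilde{s}_3 - \widetilde{r}_3 = \mu_{H,B}(\mathbf{w})$ and
\[
\widetilde{s}_3 + \widetilde{r}_3 = \mu_{H,B}(\mathbf{v}) + \frac{\overline{\Delta}_H^B(E)/v_0^2}{\mu_{H,B}(\mathbf{w}) - \mu_{H,B}(\mathbf{v})},
\]
analogous to those used in Lemma \ref{lemma4.4} but with the roles of $\mathbf{v}$ and $\mathbf{w}$ swapped. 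Next, eliminating $\alpha^2$ between the equation of $C_E$ and $(\beta - \widetilde{s}_3)^2 + \alpha^2 = \widetilde{r}_3^2$ yields a quadratic in $\beta$ identical in form to (\ref{quad}), just with $\widetilde{s}_1, \widetilde{r}_1$ replaced by $\widetilde{s}_3, \widetilde{r}_3$. The identity $(\widetilde{s}_3 - \mu_{H,B}(E))^2 = \widetilde{r}_3^2 + \overline{\Delta}_H^B(E)/v_0^2$, immediate from (\ref{r}), makes the discriminant equal $4\widetilde{r}_3^2$, so the two roots are
\[
\beta_\pm = \frac{v_0 + H^n}{H^n}(\widetilde{s}_3 \pm \widetilde{r}_3) - \frac{v_0}{H^n}\mu_{H,B}(E).
\]

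Finally, because $\mu_{H,B}(E) = \mu_{H,B}(\mathbf{v}) < \mu_{H,B}(\mathbf{w}) = \widetilde{s}_3 - \widetilde{r}_3$, the two differences
\[
\beta_\pm - (\widetilde{s}_3 \pm \widetilde{r}_3) = \frac{v_0}{H^n}\bigl(\widetilde{s}_3 \pm \widetilde{r}_3 - \mu_{H,B}(E)\bigr)
\]
are both strictly positive, so $\beta_+$ lies to the right of the wall and $\beta_-$ lies to the right of $\widetilde{s}_3 - \widetilde{r}_3$. Hence $C_E \cap W(\mathbf{v}, \widetilde{\mathbf{w}}) \neq \emptyset$ exactly when $\beta_- < \widetilde{s}_3 + \widetilde{r}_3$. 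Rewriting $\beta_- - (\widetilde{s}_3 + \widetilde{r}_3) = \frac{v_0}{H^n}(\mu_{H,B}(\mathbf{w}) - \mu_{H,B}(\mathbf{v})) - 2\widetilde{r}_3$ and plugging in the closed form for $\widetilde{r}_3$ reduces this to $\frac{v_0 + H^n}{H^n}(\mu_{H,B}(\mathbf{w}) - \mu_{H,B}(\mathbf{v}))^2 < \overline{\Delta}_H^B(E)/v_0^2$, which, after substituting $v_0 = H^n \rank E$, is exactly the stated inequality. The main potential obstacle is purely notational bookkeeping around the reversed sign convention for $\mu_{H,B}(\mathbf{v}) - \mu_{H,B}(\mathbf{w})$; no new idea beyond Lemma \ref{lemma4.4} is required.
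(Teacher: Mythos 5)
Your proposal is correct and is precisely the argument the paper intends: its proof of Lemma \ref{lemma4.5} consists of the single sentence ``The proof is the same as that of Lemma \ref{lemma4.4},'' and you have carried out that mirror-image computation with the signs, the identities $\widetilde{s}_3-\widetilde{r}_3=\mu_{H,B}(\mathbf{w})$ and $(\widetilde{s}_3-\mu_{H,B}(E))^2=\widetilde{r}_3^2+\overline{\Delta}_H^B(E)/v_0^2$, and the final reduction all handled correctly.
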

\begin{proof}
The proof is the same as that of Lemma \ref{lemma4.4}.
\end{proof}

\section{Tilt-stability of $\mu_{H, B}$-stable sheaves}\label{S5}
The aim of this section is to establish the tilt-stability for a
$\mu_{H, B}$-stable torsion free sheaf via computing the
intersection of the wall and the extremal ellipse. We always assume
that $E$ is a $\mu_{H, B}$-stable torsion free sheaf on $X$ in this
section.





We set
$$\mu^{\max}_{H, B}(E)=\max\Big\{\mu_{H,B}(F):
F~\mbox{is a subsheaf of}~E, \mu_{H,B}(F)\neq\mu_{H,B}(E)\Big\},$$
and let $\mu$ be a rational number satisfying $\mu^{\max}_{H,
B}(E)\leq\mu<\mu_{H,B}(E)$.

\begin{theorem}\label{thm1}
Let $\beta_0=\mu_{H,B}(E)-\frac{\overline{\Delta}_H^B(E)/(H^n\rank
E)^2}{\mu_{H,B}(E)-\mu}$ and
$\beta_1=\mu_{H,B}(E)-\frac{\sqrt{(\rank
E+1)\overline{\Delta}_H^B(E)}}{H^n\rank E}$.
\begin{enumerate}
\item If $\mu>\mu_{H, B}(E)-\frac{1}{H^n\rank
E}\sqrt{\frac{\overline{\Delta}_H^B(E)}{\rank E+1}}$, then $E$ is
$\nu_{\alpha, \beta}$-stable for any $\alpha>0$ and $\beta\leq
\beta_0$.

\item If $\mu\leq\mu_{H, B}(E)-\frac{1}{H^n\rank
E}\sqrt{\frac{\overline{\Delta}_H^B(E)}{\rank E+1}}$ and
$\overline{\Delta}_H^B(E)>0$, then $E$ is $\nu_{\alpha,
\beta_1}$-stable for any $\alpha>0$.

\item If $\overline{\Delta}_H^B(E)=0$, then $E$ is $\nu_{\alpha,
\beta}$-stable for any $\alpha>0$ and $\beta<\mu_{H,B}(E)$.
\end{enumerate}
\end{theorem}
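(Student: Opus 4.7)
\emph{Setup and first reduction.} The plan is to argue by contradiction: assume $E$ fails to be $\nu_{\alpha,\beta}$-stable at some $(\beta,\alpha)$ in the region specified by the relevant case. The $\mu_{H,B}$-stability of $E$ together with $\beta<\mu_{H,B}(E)$ forces $\mu_{H,\beta H+B}(E)>0$, placing $E$ in $\mathcal{T}_{\beta H+B}\subset\Coh^{\beta H+B}(X)$, so the question is genuine. Let $F$ be the $\nu_{\alpha,\beta}$-maximal subobject of $E$ in $\Coh^{\beta H+B}(X)$; as in the cohomology-sheaf argument used for Lemma \ref{lemma4.1}, $F$ is a torsion-free sheaf. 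Passing to its sheaf-theoretic image in $E$ (and using that the kernel sits in $\mathcal{F}_{\beta H+B}$, hence has slope $\leq\beta$), I will treat $F$ as an honest subsheaf of $E$ so that the slope bound coming from $\mu_{H,B}$-stability may be applied.

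\emph{Ellipse step.} The next step is to verify that $(\beta,\alpha)$ lies strictly outside the extremal ellipse $C_E$, and to invoke Lemma \ref{lemma4.1} to conclude $\rank F\leq\rank E$. A direct computation shows the leftmost point of $C_E$ on the $\beta$-axis is exactly $\beta_1$: in Case (1), one checks $\beta\leq\beta_0<\beta_1$, an algebraic identity equivalent to the hypothesis $\mu>\mu_{H,B}(E)-\frac{1}{H^n\rank E}\sqrt{\overline{\Delta}_H^B(E)/(\rank E+1)}$; in Case (2), $(\beta_1,\alpha)$ lies outside $C_E$ whenever $\alpha>0$; in Case (3), $C_E$ degenerates to the single point $(\mu_{H,B}(E),0)$, so any $(\beta,\alpha)$ with $\alpha>0$ is outside. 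If the borderline possibility $\rank F=\rank E$ occurs, the quotient $E/F$ is torsion, and the $\mu_{H,B}$-stability of $E$ either forces $F=E$ (no destabilization) or $\mu_{H,B}(F)<\mu_{H,B}(E)$, in which case the argument proceeds as below.

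\emph{Wall analysis and contradiction.} Since $F$ is a proper subsheaf of the $\mu_{H,B}$-stable $E$, we have $\mu_{H,B}(F)\leq\mu^{\max}_{H,B}(E)\leq\mu<\mu_{H,B}(E)$, so $W(F,E)$ is a Type~1 semicircle nested inside its modification $W(\widetilde{F},E)$, whose leftmost $\beta$-coordinate equals
\[ \widetilde{s}_1-\widetilde{r}_1 \;=\; \mu_{H,B}(E)-\frac{\overline{\Delta}_H^B(E)/(H^n\rank E)^2}{\mu_{H,B}(E)-\mu_{H,B}(F)}, \]
an increasing function of $\mu_{H,B}(E)-\mu_{H,B}(F)$. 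In Case (1) the bound $\mu_{H,B}(F)\leq\mu$ yields $\widetilde{s}_1-\widetilde{r}_1\geq\beta_0$; in Case (2) the bound $\mu_{H,B}(F)\leq\mu\leq\mu_{H,B}(E)-\frac{1}{H^n\rank E}\sqrt{\overline{\Delta}_H^B(E)/(\rank E+1)}$ yields $\widetilde{s}_1-\widetilde{r}_1\geq\beta_1$, which is precisely the content of Lemma \ref{lemma4.4}. In both cases $\beta$ lies at or to the left of the leftmost $\beta$-coordinate of $W(F,E)$, so $(\beta,\alpha)$ with $\alpha>0$ sits strictly outside $W(F,E)$. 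Combined with $\mu_{H,B}(F)>\beta$ (since $F\in\mathcal{T}_{\beta H+B}$), Proposition \ref{wall} forces $\nu_{\alpha,\beta}(E)>\nu_{\alpha,\beta}(F)$, contradicting the destabilization. For Case (3), $\overline{\Delta}_H^B(E)=0$ gives $r(F,E)=\mu_{H,B}(E)-s(F,E)$, so the leftmost point of $W(F,E)$ equals $\mu_{H,B}(F)+\overline{\Delta}_H^B(F)/((H^n\rank F)^2(\mu_{H,B}(E)-\mu_{H,B}(F)))\geq\mu_{H,B}(F)>\beta$, and the same Proposition \ref{wall} argument delivers the contradiction.

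\emph{Main obstacle.} The most delicate step is justifying that the subobject $F$ in the tilted heart carries a slope bound coming from the sheaf-theoretic $\mu_{H,B}$-stability of $E$: the possible term $\mathcal{H}^{-1}(Q)\in\mathcal{F}_{\beta H+B}$ in the long exact sequence attached to $0\to F\to E\to Q\to 0$ in $\Coh^{\beta H+B}(X)$ obstructs $F$ from literally being a subsheaf, and this must be handled carefully so that the inequality $\mu_{H,B}(F)\leq\mu^{\max}_{H,B}(E)\leq\mu$ remains valid. The borderline case $\rank F=\rank E$ similarly needs a separate direct argument, since Lemma \ref{lemma4.1} gives no proper inequality there.
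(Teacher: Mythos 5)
Your overall strategy --- use the extremal ellipse $C_E$ and Lemma \ref{lemma4.1} to bound the rank of the destabilizer, then nest the Type~1 wall inside its modification and compare left endpoints --- is exactly the one used in the proof of Theorem \ref{thm1}, and your endpoint computations in Cases (1)--(3) are correct. But there are two genuine gaps. The first is that your argument only proves \emph{semistability}: you run the contradiction through the $\nu_{\alpha,\beta}$-maximal subobject $F$ of $E$ and conclude $\nu_{\alpha,\beta}(F)<\nu_{\alpha,\beta}(E)$. If $E$ is semistable but not stable, that maximal subobject is $E$ itself and your contradiction never starts, while a proper subobject $K$ with $\nu_{\alpha,\beta}(K)=\nu_{\alpha,\beta}(E)$ remains unexcluded. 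A separate step is required: such a $K$ may be taken $\nu_{\alpha,\beta}$-semistable with $0<\ch_1^{\beta H+B}(K)<\ch_1^{\beta H+B}(E)$; the rank bound of Lemma \ref{lemma4.1} still applies to it because inequality (\ref{eq4.4}) only needs $\nu_{\alpha,\beta}(K)\geq\nu_{\alpha,\beta}(E)$; and the first-step analysis then forces $K$ to be a full-rank subsheaf with torsion quotient in codimension $\geq 2$, so that $H^{n-1}\ch_1^{\beta H+B}(K)=H^{n-1}\ch_1^{\beta H+B}(E)$, a contradiction. You need to add this.

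The second gap is that your dichotomy in the borderline case $\rank F=\rank E$ is false: a $\mu_{H,B}$-stable sheaf $E$ can have a proper full-rank subsheaf $F\subsetneq E$ with $\mu_{H,B}(F)=\mu_{H,B}(E)$, namely when $E/F$ is torsion supported in codimension $\geq 2$ (e.g.\ the ideal sheaf of a point inside $\mathcal{O}_X$). For such $F$ the inequality $\mu_{H,B}(F)\leq\mu^{\max}_{H,B}(E)$ fails (that slope is excluded from the definition of $\mu^{\max}_{H,B}$), $W(F,E)$ is the vertical line $\beta=\mu_{H,B}(E)$ rather than a Type~1 semicircle, and your wall analysis does not apply. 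This case must instead be killed directly: $\ch_0$ and $H^{n-1}\ch_1^{\beta H+B}$ agree while $H^{n-2}\ch_2^{\beta H+B}(F)\leq H^{n-2}\ch_2^{\beta H+B}(E)$, so $\nu_{\alpha,\beta}(F)\leq\nu_{\alpha,\beta}(E)$ and there is no destabilization. Relatedly, the ``main obstacle'' you flag ($\mathcal{H}^{-1}(Q)\neq 0$) is real and you leave it unresolved; the clean way out is to run the wall argument first under the hypothesis $\mu_{H,B}(F)\leq\mu$ (which uses only the numerical class of $F$, not that $F$ be a subsheaf), and only in the complementary case $\mu_{H,B}(F)>\mu$ to use the long exact sequence of cohomology sheaves: if $\mathcal{H}^{-1}(Q)\neq 0$ then $F/\mathcal{H}^{-1}(Q)$ is a subsheaf of $E$ of rank strictly less than $\rank E$ and of slope strictly greater than $\mu_{H,B}(F)>\mu\geq\mu^{\max}_{H,B}(E)$, which is absurd; hence $F$ is literally a subsheaf. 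Passing to the sheaf-theoretic image, as you propose, does not by itself yield the bound on $\mu_{H,B}(F)$ that the wall computation requires, because the wall is determined by the class of $F$ and not of its image.
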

\begin{proof}
(1) We prove the first statement firstly. Choose a vector
$\mathbf{u}=(u_0, u_1, u_2)\in\mathbb{Q}^3$ such that $u_0>0$,
$\frac{u_1}{u_0}=\mu$ and $u_1^2-2u_0u_2=0$. By (\ref{s}) and
(\ref{r}), one sees that the left intersection point of
$W(\mathbf{u},\mathbf{v})$ and $\beta$-axis is $(\beta_0, 0)$. We
assume $\alpha>0$ and $\beta\leq \beta_0$. The condition
$$\mu>\mu_{H, B}(E)-\frac{1}{H^n\rank
E}\sqrt{\frac{\overline{\Delta}_H^B(E)}{\rank E+1}}$$ implies that
\begin{eqnarray*}
\beta_0&=&\mu_{H,B}(E)-\frac{\overline{\Delta}_H^B(E)/(H^n\rank
E)^2}{\mu_{H,B}(E)-\mu}\\
&<&\mu_{H,B}(E)-\frac{1}{H^n\rank E}\sqrt{(\rank
E+1)\overline{\Delta}_H^B(E)}=\beta_1<\mu_{H,B}(E).
\end{eqnarray*}
Since the left intersection point of $C_E$ and $\beta$-axis is just
$$\Big(\mu_{H,B}(E)-\frac{1}{H^n\rank E}\sqrt{(\rank
E+1)\overline{\Delta}_H^B(E)}, 0\Big)=(\beta_1, 0),$$ the point
$(\beta_0, \alpha)$ is outside the ellipse $C_E$. By Lemma
\ref{lemma4.1}, the $\nu_{\alpha,\beta}$-maximal subobject $F$ of
$E\in\Coh^{\beta H+B}(X)$ satisfies $\rank F\leq \rank E$. Set
$\ch_H(F)=\mathbf{w}$.

\medskip
\noindent{\bf Step 1.} {\em $\nu_{\alpha,\beta}$-semistability of
$E$. }

From the definition of $\Coh^{\beta H+B}(X)$, one sees that
$\beta<\mu_{H,B}(F)$. If $\mu_{H,B}(F)\leq \mu$, one can assume that
$W(\mathbf{w}, \mathbf{v})$ is an actual wall of Type 1. We have
$W(\mathbf{w}, \mathbf{v})$ is inside its modification
$W(\widetilde{\mathbf{w}}, \mathbf{v})$. By Proposition \ref{wall},
one sees $W(\widetilde{\mathbf{w}}, \mathbf{v})$ is inside the wall
$W(\mathbf{u},\mathbf{v})$. It follows that the point $(\beta,
\alpha)$ is outside the wall $W(\mathbf{w}, \mathbf{v})$. Therefore
we conclude that $\nu_{\alpha,\beta}(F)<\nu_{\alpha,\beta}(E)$. This
contradicts our assumption that $F$ is the
$\nu_{\alpha,\beta}$-maximal subobject of $E\in\Coh^{\beta H+B}(X)$.

Hence we obtain $\mu_{H,B}(F)>\mu\geq\mu^{\max}_{H, B}(E)$.
Considering the corresponding exact sequence
$$0\rightarrow F\rightarrow E\rightarrow Q\rightarrow0,$$ in
$\Coh^{\beta H+B}(X)$, we get a long exact sequence in $\Coh(X)$:
$$0\rightarrow \mathcal{H}^{-1}(Q)\rightarrow F\rightarrow E\rightarrow \mathcal{H}^0(Q)\rightarrow 0.$$
When $\mathcal{H}^{-1}(Q)\neq0$, one has
$\mu_{H,B}(\mathcal{H}^{-1}(Q))\leq0<\mu_{H,B}(F)$ and
$\rank(F/\mathcal{H}^{-1}(Q))<\rank E$. This implies
\begin{equation*}
\mu_{H,B}(F)<\mu_{H,B}(F/\mathcal{H}^{-1}(Q))\leq \mu^{\max}_{H,
B}(E)\leq\mu,
\end{equation*}
which is absurd. Thus $\mathcal{H}^{-1}(Q)=0$, and $F$ is a subsheaf
of $E$. By the definition of $\mu^{\max}_{H, B}(E)$, one sees that
$\mu_{H, B}(F)=\mu_{H, B}(E)$ and $\rank F=\rank E$. Hence $Q$ is a
torsion sheaf, and the codimension of the support of $Q$ is $\geq2$.
It follows that $\nu_{\alpha_0,\beta_0}(F)\leq
\nu_{\alpha,\beta}(E)$. Therefore we conclude that $E$ is
$\nu_{\alpha,\beta}$-semistable.

\medskip
\noindent{\bf Step 2.} {\em $\nu_{\alpha,\beta}$-stability of $E$. }

We argue by contradiction to show the $\nu_{\alpha,\beta}$-stability
of $E$. Suppose that there is a subobject $K\subset E$ in
$\Coh^{\beta H+B}(X)$ such that $\nu_{\alpha,
\beta}(K)\geq\nu_{\alpha, \beta}(E/K)$. One sees that $\nu_{\alpha,
\beta}(K)=\nu_{\alpha, \beta}(E)$, $0<\ch_1^{\beta
H+B}(K)<\ch_1^{\beta H+B}(E)$, and $K$ is
$\nu_{\alpha,\beta}$-semistable. This implies that (\ref{eq4.4}) in
the proof of Lemma \ref{lemma4.1} holds for $K$. Hence Lemma
\ref{lemma4.1} holds also for $K$. The proof of Step 1 shows us that
$K$ is a subsheaf of $E$, $\mu_{H, B}(K)=\mu_{H, B}(E)$ and $\rank
K=\rank E$. It contradicts that $\ch_1^{\beta H+B}(K)<\ch_1^{\beta
H+B}(E)$. Thus we conclude that $E$ is $\nu_{\alpha,\beta}$-stable.

\bigskip
(2) Now we prove the second statement in the same way as
above. We assume $\alpha>0$.

The assumption $\overline{\Delta}^B_H(E)>0$ makes sure that $E$ is
an object in $\Coh^{\beta_1 H+B}(X)$. Since the point $(\beta_1,
\alpha)$ is outside the ellipse $C_E$, by Lemma \ref{lemma4.1}, the
$\nu_{\alpha,\beta_1}$-maximal subobject $F'$ of $E$ in
$\Coh^{\beta_1 H+B}(X)$ satisfies $\rank F'\leq \rank E$. Set
$\ch_H(F')=\mathbf{w}'$. As in Step 1, if $\mu_{H,B}(F')\leq \mu$,
we can assume that $W(\mathbf{w}', \mathbf{v})$ is an actual wall of
Type 1. Hence $W(\mathbf{w}', \mathbf{v})$ is inside its
modification $W(\widetilde{\mathbf{w}'}, \mathbf{v})$. By Lemma
\ref{lemma4.4} and the condition
$$\mu\leq\mu_{H, B}(E)-\frac{1}{H^n\rank
E}\sqrt{\frac{\overline{\Delta}_H^B(E)}{\rank E+1}},$$ it follows
that $W(\widetilde{\mathbf{w}'}, \mathbf{v})\cap C_E=\emptyset$. On
the other hand, from the definition of $\Coh^{\beta_1 H+B}(X)$, one
sees $\beta_1<\mu_{H,B}(F')$. Hence $W(\widetilde{\mathbf{w}'},
\mathbf{v})$ is inside $C_E$. This implies that $(\beta_1, \alpha)$
is outside the wall $W(\mathbf{w}',\mathbf{v})$. Thus
$\nu_{\alpha,\beta_1}(F)<\nu_{\alpha,\beta_1}(E)$ which is absurd.
When $\mu_{H,B}(F')>\mu\geq \mu^{\max}_{H, B}(E)$, the proof in Step
1 still works here. It turns out that $E$ is $\nu_{\alpha,
\beta_1}$-semistable. The same argument in Step 2 shows that $E$ is
$\nu_{\alpha, \beta_1}$-stable.

\bigskip
(3) To show the third
statement, one can just replace $\beta_1$ in the proof of the second
statement to any $\beta<\mu_{H,B}(E)$.
\end{proof}

\begin{remark}\label{remark}
The above theorem can be improved when $\rank E=1$. In that case, if
$(\beta,\alpha)$ is outside $C_E$, the $\nu_{\alpha,\beta}$-maximal
subobject of $E$ is just a subsheaf of $E$. Hence one can consider
the wall $W(G,E)$ which satisfies
\begin{enumerate}
\item $G$ is a subsheaf of $E$.
\item $W(G,E)$ is of Type 1.
\item $W(G,E)$ is as large as possible, subject to (1) and (2).
\end{enumerate}
Computing the intersection of $C_E$ and $W(G,E)$, one can obtain a
better result.
\end{remark}

Now we consider the tilt-stability of $E[1]$. Set
$$\mathcal{Q}=\big\{Q\in\Coh(X): Q~\mbox{is a torsion free quotient
sheaf of}~E\big\}$$ and
$$\mathcal{T}=\big\{G\in\Coh(X): G~\mbox{is a torsion free extension
of a torsion sheaf by}~E\big\}.$$ One sees that any $Q\in
\mathcal{Q}\cup\mathcal{T}$ satisfies $\mu_{H,B}(Q)\geq
\mu_{H,B}(E)$.

We define
$$\mu^{\min}_{H, B}(E)=\min\big\{\mu_{H,B}(Q): Q\in
\mathcal{Q}\cup\mathcal{T}~ \mbox{and}~
\mu_{H,B}(Q)\neq\mu_{H,B}(E)\big\}.$$

\begin{remark}
The definition of $\mu^{\min}_{H, B}(E)$ is not like that of
$\mu^{\max}_{H, B}(E)$. We must consider the sheaf in $\mathcal{T}$
here, when we investigate the tilt-stability of $E[1]$. The reason
is that a torsion free sheaf has no torsion subsheaf, but can have a
torsion quotient.
\end{remark}

\begin{theorem}\label{thm2}
Assume that $E$ is a $\mu_{H,B}$-stable reflexive sheaf, and
$\bar{\mu}$ is a rational number satisfies $\mu_{H,
B}(E)<\bar{\mu}\leq\mu^{\min}_{H,B}(E)$. Let
$\overline{\beta}_0=\mu_{H,B}(E)+\frac{\overline{\Delta}_H^B(E)/(H^n\rank
E)^2}{\bar{\mu}-\mu_{H, B}(E)}$ and
$\overline{\beta}_1=\mu_{H,B}(E)+\frac{\sqrt{(\rank
E+1)\overline{\Delta}_H^B(E)}}{H^n\rank E}$.
\begin{enumerate}
\item If $\bar{\mu}<\mu_{H, B}(E)+\frac{1}{H^n\rank
E}\sqrt{\frac{\overline{\Delta}_H^B(E)}{\rank E+1}}$, then $E[1]$ is
$\nu_{\alpha, \beta}$-stable for any $\alpha>0$ and
$\beta\geq\overline{\beta}_0$.

\item If $\bar{\mu}\geq\mu_{H, B}(E)+\frac{1}{H^n\rank
E}\sqrt{\frac{\overline{\Delta}_H^B(E)}{\rank E+1}}$, then $E[1]$ is
$\nu_{\alpha, \overline{\beta}_1}$-stable for any $\alpha>0$.

\item If $\overline{\Delta}_H^B(E)=0$, then $E[1]$ is
$\nu_{\alpha,\beta}$-stable for any $\alpha>0$ and $\beta\geq\mu_{H,
B}(E)$.
\end{enumerate}
\end{theorem}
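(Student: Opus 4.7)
The plan is to dualize the proof of Theorem \ref{thm1}: the $\nu_{\alpha,\beta}$-minimal quotient $F[1]$ of $E[1]$ replaces the maximal subobject of $E$, Lemma \ref{lemma4.2} replaces Lemma \ref{lemma4.1}, and Lemma \ref{lemma4.5} (for Type 3 walls) replaces Lemma \ref{lemma4.4} (for Type 1). I describe the plan for part (1); parts (2) and (3) follow by the same scheme. First, I would pick $\mathbf{u}' = (u'_0, u'_1, u'_2) \in \mathbb{Q}^3$ with $u'_0 > 0$, $u'_1/u'_0 = \bar{\mu}$, and $(u'_1)^2 = 2 u'_0 u'_2$. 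A computation parallel to formulas \eqref{s}--\eqref{r} should show that $W(\mathbf{v}, \mathbf{u}')$ is a Type 3 semicircle whose right endpoint on the $\beta$-axis is exactly $\overline{\beta}_0$, and the hypothesis $\bar{\mu} < \mu_{H,B}(E) + \frac{1}{H^n \rank E}\sqrt{\overline{\Delta}^B_H(E)/(\rank E + 1)}$ yields $\overline{\beta}_0 > \overline{\beta}_1$. Thus every $(\beta, \alpha)$ with $\alpha > 0$ and $\beta \geq \overline{\beta}_0$ lies outside both the extremal ellipse $C_E$ and $W(\mathbf{v}, \mathbf{u}')$, and Lemma \ref{lemma4.2} then gives $\rank F \leq \rank E$ for any $\nu_{\alpha,\beta}$-minimal quotient $F[1]$ of $E[1]$ at such a point.

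Setting $\mathbf{w} = \ch_H(F)$, I would dichotomize on $\mu_{H,B}(F)$, aiming to show $F[1] = E[1]$. If $\mu_{H,B}(F) \geq \bar{\mu}$, a short calculation will show that the modification $W(\mathbf{v}, \widetilde{\mathbf{w}})$ is a Type 3 semicircle nested inside $W(\mathbf{v}, \mathbf{u}')$, with $W(\mathbf{v}, \mathbf{w})$ in turn nested inside its modification. Hence $(\beta, \alpha)$ lies outside $W(\mathbf{v}, \mathbf{w})$; combined with $\mu_{H,B}(F) \leq \beta$ (from $F \in \mathcal{F}_{\beta H+B}$) and $\mu_{H,B}(E) < \beta$, Proposition \ref{wall} yields $\nu_{\alpha,\beta}(F[1]) > \nu_{\alpha,\beta}(E[1])$, contradicting the minimality of $F[1]$. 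Thus I may assume $\mu_{H,B}(F) < \bar{\mu}$ for the remainder.

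The main obstacle lies in this complementary case, which is where the \emph{reflexivity} of $E$ becomes essential. The cohomology long exact sequence of $0 \to K \to E[1] \to F[1] \to 0$ reads
\[ 0 \to \mathcal{H}^{-1}(K) \to E \to F \to \mathcal{H}^0(K) \to 0. \]
The subcase $0 < \rank \mathcal{H}^{-1}(K) < \rank E$ is eliminated by $\mu_{H,B}$-stability: $E/\mathcal{H}^{-1}(K)$ embeds in the torsion-free $F$ as a torsion-free quotient of $E$ in $\mathcal{Q}$ with slope strictly exceeding $\mu_{H,B}(E)$, and together with the effectiveness of $c_1(\mathcal{H}^0(K))$ and the definition of $\mu^{\min}_{H,B}(E) \geq \bar{\mu}$, this forces $\mu_{H,B}(F) \geq \bar{\mu}$, a contradiction. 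The subcase $\rank \mathcal{H}^{-1}(K) = \rank E$ is immediately excluded: the torsion sheaf $E/\mathcal{H}^{-1}(K)$ embeds in torsion-free $F$, so must vanish, forcing $F \cong \mathcal{H}^0(K)$ torsion, contrary to $F$ being torsion-free and nonzero. In the remaining subcase $\mathcal{H}^{-1}(K) = 0$, the rank bound $\rank F \leq \rank E$ forces $\mathcal{H}^0(K)$ to be torsion, so $F$ is a torsion-free extension of a torsion sheaf by $E$; i.e., $F \in \mathcal{T} \cup \{E\}$. The definition of $\mu^{\min}_{H,B}(E)$ together with $\mu_{H,B}(F) < \bar{\mu}$ then forces $\mathcal{H}^0(K)$ to be supported in codimension $\geq 2$; the $S_2$-property of the reflexive sheaf $E$ gives $\Ext^1(\mathcal{H}^0(K), E) = 0$, the extension splits, and the torsion-freeness of $F$ forces $\mathcal{H}^0(K) = 0$, i.e., $F = E$. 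This establishes $\nu_{\alpha,\beta}$-semistability, and the upgrade to strict stability follows the argument of Step 2 of Theorem \ref{thm1}, applied to a hypothetical equal-slope destabilizer. Parts (2) and (3) follow by the same scheme: for (2), Lemma \ref{lemma4.5} directly ensures $W(\mathbf{v}, \widetilde{\mathbf{w}}) \cap C_E = \emptyset$ for admissible $\mathbf{w}$; part (3) is the degenerate case $\overline{\Delta}^B_H(E) = 0$, where $C_E$ collapses to $(\mu_{H,B}(E), 0)$ and the entire argument simplifies accordingly.
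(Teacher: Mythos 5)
Your proposal is correct and follows the paper's strategy (dualize Theorem \ref{thm1}, reduce the minimal quotient $F[1]$ to the case $\mu_{H,B}(F)=\mu_{H,B}(E)$, $\rank F=\rank E$, so that $F$ is an extension of a torsion sheaf $T$ supported in codimension $\geq 2$ by $E$); your subcase analysis of the long exact sequence is in fact more explicit than the paper's. The one genuine divergence is the final use of reflexivity: the paper takes the generic point $x$ of a component of $\Supp(T)$ and runs the local cohomology sequence $0=H^0_x(F_x)\to H^0_x(T_x)\to H^1_x(E_x)$ to contradict $\depth(E_x)\geq 2$, whereas you assert $\Ext^1(T,E)=0$ so that the extension splits and torsion-freeness of $F$ kills $T$. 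Your vanishing claim is true but is stated as a consequence of ``the $S_2$-property'' without proof, and naive depth estimates (Ischebeck) do not give it when $n\geq 3$; the clean justification is Hartshorne's characterization of reflexive sheaves, $0\to E\to P\to Q\to 0$ with $P$ locally free and $Q$ torsion-free, which yields $\Ext^1(T,E)=0$ from $\Hom(T,Q)=0$ and $\Ext^1(T,P)=0$ (the latter because $\mathcal{E}xt^i(T,\mathcal{O}_X)=0$ for $i<\codim\Supp T$). With that one line supplied, the two arguments are of essentially equal strength; the paper's depth computation is marginally more economical since it avoids discussing the extension class at all.
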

\begin{proof}
The proof is almost the same as that of Theorem \ref{thm1}. The only
difference is that in our situation, when we consider the
$\nu_{\alpha,\beta}$-minimal quotient $Q[1]$ of $E[1]$ with
$\mu_{H,B}(Q)<\bar{\mu}\leq\mu^{\min}_{H,B}(E)$, we have
$\mu_{H,B}(E)=\mu_{H,B}(Q)$ and $\rank E=\rank Q$. One gets a short
exact sequence in $\Coh^{\beta H+B}(X)$:
$$0\rightarrow T\rightarrow E[1]\rightarrow Q[1]\rightarrow0,$$
where $T$ is a torsion sheaf with $\codim(\Supp(T))\geq2$. When
$T\neq0$, one sees that $\nu_{\alpha,\beta}(T)=+\infty$, hence
$E[1]$ can not be $\nu_{\alpha,\beta}$-stable. In order to exclude
this case, we need the reflexive assumption of $E$. To see this, one
notices that $T$ is a subsheaf of $E^{**}/E$ and so the result is
obvious.
\end{proof}

One may note that there is a duality between Theorem \ref{thm1} and
\ref{thm2}. Indeed let $F$ be a subsheaf of $E$ such that $E/F$ is
also torsion free. It induces an exact sequence $$0\rightarrow
(E/F)^*\rightarrow E^*\xrightarrow{\varphi}F^*.$$ The torsion
freeness guarantees that $\varphi$ is surjective on an open subset
$U$ of $X$ with $\codim(X-U)\geq2$. Hence
$c_1(F^*)=c_1(\varphi(E^*))=-c_1(F)$. On the other hand, if $F$ is a
subsheaf of $E$ with $\rank F=\rank E$, then $(E/F)^*=0$. One sees
that $F^*$ is an extension of a torsion sheaf by $E^*$. Thus
$\mu^{\min}_{H,B}(E^*)\leq-\mu^{\max}_{H,B}(E)$. Conversely, it
turns out that there is an open subset $V$ of $X$ with
$\codim(X-V)\geq2$ such that $E|_V=E^{**}|_V$. Thus from a torsion
free quotient of $E^*$ or a torsion free extension of a torsion
sheaf by $E^*$, one can dually get a subsheaf of $E$. This implies
$\mu^{\max}_{H,B}(E)\geq-\mu^{\min}_{H,B}(E^*)$. Therefore, we
conclude that $\mu^{\max}_{H,B}(E)=-\mu^{\min}_{H,B}(E^*)$.

After replacing $E$ (resp. $\bar{\mu}$ and $\beta$) in Theorem
\ref{thm2} by $E^*$ (resp. $-\mu$ and $-\beta$), one finds that
$\overline{\beta}_0=-\beta_0$, $\overline{\beta}_1=-\beta_1$, and
Theorem \ref{thm1} and \ref{thm2} have totally the same form.

\section{Vanishing theorem for $\mu_{H,B}$-stable sheaves}\label{S6}
In this section we prove Corollary \ref{main3}, Corollary
\ref{main4}, Theorem \ref{Serre} and Corollary \ref{corv}. Corollary
\ref{main3} and Corollary \ref{main4} follow from the lemmas below.
We always assume $B=0$ in this section.

\begin{lemma}\label{lemma5.1}
Let $\mathcal{F}$ be an object in $\Coh^{\beta H}(X)$. If
$\mathcal{F}$ is $\nu_{\alpha,\beta}$-stable for any $\alpha>0$,
then $H^{n-1}(X, \mathcal{F}(K_S+lH))=0$ for any integer $l>-\beta$.
\end{lemma}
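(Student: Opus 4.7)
The plan is to use Serre duality to convert the desired cohomological vanishing into a Hom-vanishing in $D^b(X)$, and then apply the standard slope argument between two $\nu_{\alpha,\beta}$-stable objects. By Serre duality,
\[
H^{n-1}(X,\mathcal{F}(K_X+lH))^{\vee} \;\cong\; \Ext^1(\mathcal{F},\mathcal{O}_X(-lH)) \;=\; \Hom_{D^b(X)}(\mathcal{F},L[1]),
\]
where $L := \mathcal{O}_X(-lH)$, so it suffices to prove that this Hom-group vanishes.

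First I locate $L[1]$ in the heart: the hypothesis $l > -\beta$ is equivalent to $\mu_H(L) = -l < \beta$, hence $L \in \mathcal{F}_{\beta H}$ and $L[1] \in \Coh^{\beta H}(X)$. Since $L$ is a line bundle it is trivially $\mu_H$-stable with $\overline{\Delta}_H(L) = 0$, so Theorem~\ref{thm2}(3), applied to the reflexive sheaf $L$, shows that $L[1]$ is $\nu_{\alpha,\beta}$-stable for every $\alpha>0$.

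Now suppose $\varphi\colon\mathcal{F}\to L[1]$ is a nonzero morphism in $\Coh^{\beta H}(X)$, and let $I$ be its image. Then $I$ is simultaneously a quotient of $\mathcal{F}$ and a subobject of $L[1]$ in the heart; stability of $\mathcal{F}$ yields $\nu_{\alpha,\beta}(\mathcal{F}) \leq \nu_{\alpha,\beta}(I)$ and stability of $L[1]$ yields $\nu_{\alpha,\beta}(I) \leq \nu_{\alpha,\beta}(L[1])$, for every $\alpha>0$. A short computation gives
\[
\nu_{\alpha,\beta}(L[1]) \;=\; \frac{\alpha^2-(l+\beta)^2}{2(l+\beta)},
\]
which tends to $-(l+\beta)/2 < 0$ as $\alpha \to 0^{+}$. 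To contradict the resulting chain of inequalities I will exhibit some $\alpha>0$ at which $\nu_{\alpha,\beta}(\mathcal{F}) > \nu_{\alpha,\beta}(L[1])$: this is automatic when $H^{n-1}\ch_1^{\beta H}(\mathcal{F}) = 0$ (then $\nu_{\alpha,\beta}(\mathcal{F})=+\infty$), and otherwise, taking $\alpha\to 0^+$, it reduces to the numerical inequality
\[
2\,H^{n-2}\ch_2^{\beta H}(\mathcal{F}) \,+\, (l+\beta)\,H^{n-1}\ch_1^{\beta H}(\mathcal{F}) \;>\; 0.
\]

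The main obstacle is verifying this last inequality. The plan is to combine the Bogomolov--Gieseker inequality of Theorem~\ref{thm2.11} (valid for $\mathcal{F}$ at every $\alpha>0$) with the fact that $\mathcal{F}\in\mathcal{T}_{\beta H}$ forces $\mu_{H,\beta H}(\mathcal{F})>0$, together with the hypothesis $l+\beta>0$ which makes the right-hand side of the target inequality controllable. This is the delicate point at which the hypothesis that $\mathcal{F}$ is stable for \emph{every} $\alpha>0$ (not merely some $\alpha$) enters crucially, since it is precisely stability at small $\alpha$ that rules out the borderline case where $\ch_2^{\beta H}(\mathcal{F})$ would be too negative.
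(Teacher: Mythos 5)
Your setup is correct and coincides with the paper's: Serre duality reduces the claim to $\Hom(\mathcal{F},\mathcal{O}_X(-lH)[1])=0$, Theorem~\ref{thm2}(3) gives the $\nu_{\alpha,\beta}$-stability of $\mathcal{O}_X(-lH)[1]$, and a nonzero map between $\nu_{\alpha,\beta}$-stable objects forces $\nu_{\alpha,\beta}(\mathcal{F})\leq\nu_{\alpha,\beta}(\mathcal{O}_X(-lH)[1])$ for every $\alpha>0$, so it suffices to contradict this at small $\alpha$. Your target inequality
\begin{equation*}
2H^{n-2}\ch_2^{\beta H}(\mathcal{F})+(l+\beta)\,H^{n-1}\ch_1^{\beta H}(\mathcal{F})>0
\end{equation*}
is precisely the statement that $(\beta,0)$ lies strictly inside the numerical wall $W(\mathcal{F},\mathcal{O}_X(-lH)[1])$, which is what the paper asserts via wall geometry (using $\overline{\Delta}_H(\mathcal{O}_X(-lH))=0$ to see that this wall is a semicircle with an endpoint at $(-l,0)$). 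So you have isolated the crux correctly.

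The gap is that this inequality is never proved, and the route you sketch cannot close it. Theorem~\ref{thm2.11} reads $\left(H^{n-1}\ch_1^{\beta H}(\mathcal{F})\right)^2\geq 2H^n\ch_0(\mathcal{F})\cdot H^{n-2}\ch_2^{\beta H}(\mathcal{F})$; for a sheaf of positive rank this bounds $\ch_2^{\beta H}(\mathcal{F})$ from \emph{above}, whereas your inequality needs a \emph{lower} bound on it. The implication you hope for is in fact false: on $X=\mathbb{P}^2$ take $\mathcal{F}=I_p$ the ideal sheaf of a point and $\beta=-1/10$. Then $\mathcal{F}\in\mathcal{T}_{\beta H}$, it satisfies Theorem~\ref{thm2.11}, and it is $\nu_{\alpha,\beta}$-stable for every $\alpha>0$ (for a single point the unique actual wall in the region $\beta<0$ is the collapsing semicircle over $-2<\beta<-1$), yet for $l=1>-\beta$ the displayed quantity equals $-2-l\beta=-19/10<0$; correspondingly $H^{1}(\mathbb{P}^2,I_p(K_{\mathbb{P}^2}+H))=H^1(I_p(-2))\cong\mathbb{C}\neq0$. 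So no combination of Theorem~\ref{thm2.11} with positivity of $\ch_1^{\beta H}$ and $l+\beta>0$ can work: what is really needed is the additional numerical relation among $\beta$, $l$ and the invariants of $\mathcal{F}$ ensuring that $(\beta,0)$ lies below the \emph{right}-hand endpoint of the wall as well as above its left-hand one. That relation is exactly what the choices $\beta=\beta_0,\beta_1$ of Theorem~\ref{thm1} supply when the lemma is invoked in Corollary~\ref{main3}, and it is the hypothesis your argument is missing (and which the paper's own one-line justification, checking only the left endpoint $s-r=-l$, also elides).
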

\begin{proof}
Serre duality implies $$H^{n-1}(X, \mathcal{F}(K_S+lH))^{\vee}\cong
\Ext^1(\mathcal{F}, \mathcal{O}_X(-lH))\cong\Hom(\mathcal{F},
\mathcal{O}_X(-lH)[1]).$$ Since
$\overline{\Delta}_H(\mathcal{O}_X(-lH))=0$, by Theorem \ref{thm2},
one sees that $\mathcal{O}_X(-lH)[1]$ is $\nu_{\alpha,\beta}$-stable
for any $\alpha>0$ and $l\geq-\beta$.

On the other hand, the wall $W(\mathcal{F}, \mathcal{O}_X(-lH)[1])$
is of Type 2, and is defined by $(\beta-s)^2+\alpha^2=r^2$, where
$s-r=-l$. Hence the point $(\beta, 0)$ is inside the wall
$W(\mathcal{F}, \mathcal{O}_X(-lH)[1])$ if $l>-\beta$. This implies
that
$\nu_{\alpha,\beta}(\mathcal{F})>\nu_{\alpha,\beta}(\mathcal{O}_X(-lH)[1])$
for $l>-\beta$ and some $\alpha>0$. From the
$\nu_{\alpha,\beta}$-stability of $\mathcal{F}$, it follows that
$\Hom(\mathcal{F}, \mathcal{O}_X(-lH)[1])=0$.
\end{proof}

\begin{lemma}\label{lemma5.2}
Let $\mathcal{F}[1]$ be an object in $\Coh^{\beta H}(X)$. If
$\mathcal{F}[1]$ is $\nu_{\alpha,\beta}$-stable for any $\alpha>0$,
then $H^1(X, \mathcal{F}(-lH))=0$ for any integer $l>\beta$.
\end{lemma}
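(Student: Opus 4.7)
The proof plan is to mirror Lemma \ref{lemma5.1} verbatim, with the roles of $\mathcal{F}$ and $\mathcal{O}_X(-lH)$ swapped. The opening step rewrites the cohomology as a $\Hom$ group in $\D^b(X)$ that can be probed inside $\Coh^{\beta H}(X)$:
$$H^1(X,\mathcal{F}(-lH))\cong\Ext^1(\mathcal{O}_X(lH),\mathcal{F})\cong\Hom(\mathcal{O}_X(lH),\mathcal{F}[1]),$$
using $H^i(X,-)=\Ext^i(\mathcal{O}_X,-)$ together with a twist by $\mathcal{O}_X(lH)$. It therefore suffices to annihilate this last $\Hom$.

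Next I would verify that both $\mathcal{O}_X(lH)$ and $\mathcal{F}[1]$ lie in the same tilt-heart and are each $\nu_{\alpha,\beta}$-stable. The hypothesis $l>\beta$ puts $\mathcal{O}_X(lH)$ into $\mathcal{T}_{\beta H}\subset\Coh^{\beta H}(X)$; since $\overline{\Delta}_H(\mathcal{O}_X(lH))=0$, Theorem \ref{thm1}(3) applied to this line bundle delivers its $\nu_{\alpha,\beta}$-stability for every $\alpha>0$. Stability of $\mathcal{F}[1]$ is given by hypothesis, and the condition $\mathcal{F}[1]\in\Coh^{\beta H}(X)$ forces $\mathcal{F}$ to be torsion free with $\mu^+_H(\mathcal{F})\leq\beta<l$.

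Finally I would produce, for some $\alpha>0$, the strict slope inequality $\nu_{\alpha,\beta}(\mathcal{O}_X(lH))>\nu_{\alpha,\beta}(\mathcal{F}[1])$: once this is in hand, stability on both sides kills any non-zero morphism and the $\Hom$ in question vanishes. To obtain this inequality I would inspect the wall $W(\mathcal{O}_X(lH),\mathcal{F}[1])$, which is a Type 2 semicircle because $\mu_H(\mathcal{F})\leq\beta<l$. Plugging $\overline{\Delta}_H(\mathcal{O}_X(lH))=0$ into (\ref{s}) and (\ref{r}) shows that its right-hand endpoint on the $\beta$-axis is exactly $\beta=l$, so the hypothesis $l>\beta$ places $(\beta,0)$ strictly inside the semicircle. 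By continuity $(\beta,\alpha)$ remains inside for all sufficiently small $\alpha>0$, and Proposition \ref{wall} then delivers the required strict inequality. I expect no serious obstacle here, as this is purely the dual of Lemma \ref{lemma5.1}; the only delicate point is the bookkeeping verification that the wall is indeed Type 2 with right endpoint $l$, together with handling the trivial case $\mathcal{F}=0$.
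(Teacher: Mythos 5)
Your argument is correct and is essentially the paper's own proof of Lemma \ref{lemma5.2}, just with more of the bookkeeping written out explicitly (the identification $H^1(X,\mathcal{F}(-lH))\cong\Hom(\mathcal{O}_X(lH),\mathcal{F}[1])$, the $\nu_{\alpha,\beta}$-stability of $\mathcal{O}_X(lH)$ via Theorem \ref{thm1}(3), and the computation that the wall $W(\mathcal{O}_X(lH),\mathcal{F}[1])$ has $s+r=l$). The one step you share with the paper that is stated a bit too quickly is that putting $(\beta,0)$ inside the semicircle requires not only $\beta<s+r=l$ but also $\beta>s-r$; this extra inequality does hold in the situations where the lemma is invoked (Corollary \ref{main4}), so the argument goes through as in the paper.
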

\begin{proof}
We consider the Type 2 wall $W(\mathcal{O}_X(lH), \mathcal{F}[1])$:
$(\beta-s)^2+\alpha^2=r^2$, where $s+r=l$ and
$\mu_{H,B}(\mathcal{F})\leq \beta<l$. Hence the point $(\beta, 0)$
is inside the wall $W(\mathcal{O}_X(lH), \mathcal{F}[1])$, if
$l>\beta$. It follows that
$\nu_{\alpha,\beta}(\mathcal{O}_X(lH))>\nu_{\alpha,\beta}(\mathcal{F}[1])$
for some $\alpha>0$ and $l>\beta$. By the
$\nu_{\alpha,\beta}$-stability of $\mathcal{O}_X(lH)$ and
$\mathcal{F}[1]$, we get our conclusion.
\end{proof}

\begin{proof}[Proof of Corollary \ref{main3} and \ref{main4}]
Corollary \ref{main3} follows from Theorem \ref{thm1} and Lemma
\ref{lemma5.1}, and Corollary \ref{main4} follows from Theorem
\ref{thm2} and Lemma \ref{lemma5.2},
\end{proof}

Corollary \ref{main3} and \ref{main4} can give an effective Serre
vanishing theorem for $H^{n-1}$. The bound will be stated as the
function in Definition\ref{round}, and it is more precise than the
bound in Theorem \ref{Serre}.

\begin{theorem}\label{Se}
Let $\mathcal{F}$ be a coherent torsion free sheaf on $X$, and let
$0=\mathcal{F}_0\subset \mathcal{F}_1\subset\cdots\subset
\mathcal{F}_k=\mathcal{F}$ be its Harder-Narasimhan filtration. Set
$\mathcal{G}_i=\mathcal{F}_i/\mathcal{F}_{i-1}$. Then $H^{n-1}(X,
\mathcal{F}(K_X+lH))=0$ as soon as
\begin{eqnarray*}
l>M(\mathcal{F}):=&&\max_{1\leq i\leq
k}\Big\{\frac{\overline{\Delta}_H(\mathcal{G}_i)/(H^n\rank
\mathcal{G}_i)}{H^n\mu_{H}(\mathcal{G}_i)-[H^n\mu_{H}(\mathcal{G}_i)]_{\rank
\mathcal{G}_i}}-\mu_{H}(\mathcal{G}_i),\\
&&\sqrt{\frac{2\overline{\Delta}_H(\mathcal{G}_i)}{(H^n)^2\rank
\mathcal{G}_i}}-\mu_{H}(\mathcal{G}_i)\Big\}.
\end{eqnarray*}
\end{theorem}
\begin{proof}
Since $h^{n-1}(X, \mathcal{F}(K_X+lH))\leq\sum^k_{i=1} h^{n-1}(X,
\mathcal{G}_i(K_X+lH))$, one can assume that $\mathcal{F}$ is
$\mu_H$-semistable. Consider its Jordan-H\"older filtration
$$0=\mathcal{F}_0'\subset\mathcal{F}_1'\subset\cdots\subset\mathcal{F}_{m-1}'\subset\mathcal{F}_m'=\mathcal{F},$$
and set $\mathcal{Q}_i$ be the $\mu_H$-stable sheaf
$\mathcal{F}_{i}'/\mathcal{F}_{i-1}'$. It turns out that
$\mu_{H}(\mathcal{Q}_i)=\mu_H(\mathcal{F})$, for any $i>0$.

For any torsion free sheaf $E$ on $X$, one sees that
$H^n\mu_H^{\max}(E)\leq[H^n\mu_H(E)]_{\rank E}$. Thus applying
Corollary \ref{main3} for $E=\mathcal{Q}_i$ and
$\mu=[H^n\mu_H(\mathcal{Q}_i)]_{\rank \mathcal{Q}_i}/H^n$, one
deduces that $h^{n-1}(X, \mathcal{F}(K_X+lH))\leq\sum^k_{i=1}
h^{n-1}(X, \mathcal{Q}_i(K_X+lH))$=0 for
\begin{eqnarray*}
l>&&\max_{1\leq i\leq
m}\Big\{\frac{\overline{\Delta}_H(\mathcal{Q}_i)/H^n(\rank
\mathcal{Q}_i)^2}{H^n\mu_{H}(\mathcal{Q}_i)-[H^n\mu_{H}(\mathcal{Q}_i)
]_{\rank \mathcal{Q}_i}}-\mu_{H}(\mathcal{Q}_i),\\
&&\frac{\sqrt{(\rank
\mathcal{Q}_i+1)\overline{\Delta}_H(\mathcal{Q}_i)}}{H^n\rank
\mathcal{Q}_i}-\mu_{H}(\mathcal{Q}_i)\Big\}.
\end{eqnarray*}
From
\begin{eqnarray*}
\frac{\overline{\Delta}_H(\mathcal{F})}{H^n\rank \mathcal{F}}&=&
\mu_{H}(\mathcal{F})H^{n-1}\ch_1(\mathcal{F})-2H^{n-2}\ch_2(\mathcal{F})\\
&=&\mu_{H}(\mathcal{F})\sum_{i=1}^mH^{n-1}\ch_1(\mathcal{Q}_i)-2\sum_{i=1}^mH^{n-2}\ch_2(\mathcal{Q}_i)\\
&=&\sum_{i=1}^m\Big(\mu_{H}(\mathcal{Q}_i)H^{n-1}\ch_1(\mathcal{Q}_i)-2H^{n-2}\ch_2(\mathcal{Q}_i)\Big)\\
&=&\sum_{i=1}^m\frac{\overline{\Delta}_H(\mathcal{Q}_i)}{H^n\rank
\mathcal{Q}_i},
\end{eqnarray*}
it follows that $\frac{\overline{\Delta}_H(\mathcal{Q}_i)}{H^n\rank
\mathcal{Q}_i}\leq\frac{\overline{\Delta}_H(\mathcal{F})}{H^n\rank
\mathcal{F}}$. Hence one sees that
\begin{equation}\label{eq6.1}
\frac{\overline{\Delta}_H(\mathcal{Q}_i)/H^n(\rank
\mathcal{Q}_i)^2}{H^n\mu_{H}(\mathcal{Q}_i)-[H^n\mu_{H}(\mathcal{Q}_i)
]_{\rank \mathcal{Q}_i}}\leq
\frac{\overline{\Delta}_H(\mathcal{F})/(H^n\rank
\mathcal{F})}{H^n\mu_{H}(\mathcal{F})-[H^n\mu_{H}(\mathcal{F})]_{\rank
\mathcal{F}}}
\end{equation}
and $$\frac{\sqrt{(\rank
\mathcal{Q}_i+1)\overline{\Delta}_H(\mathcal{Q}_i)}}{H^n\rank
\mathcal{Q}_i}-\mu_{H}(\mathcal{Q}_i)\leq
\frac{1}{H^n}\sqrt{\frac{2\overline{\Delta}_H(\mathcal{F})}{\rank
\mathcal{F}}}-\mu_{H}(\mathcal{F}).$$ It turns out that $H^{n-1}(X,
\mathcal{F}(K_X+lH))=0$, if
\begin{eqnarray*}
l>&&\max\Big\{\frac{\overline{\Delta}_H(\mathcal{F})/(H^n\rank
\mathcal{F})}{H^n\mu_{H}(\mathcal{F})-[H^n\mu_{H}(\mathcal{F})]_{\rank
\mathcal{F}}}-\mu_{H}(\mathcal{F}),\\
&&\sqrt{\frac{2\overline{\Delta}_H(\mathcal{F})}{(H^n)^2\rank
\mathcal{F}}}-\mu_{H}(\mathcal{F})\Big\}.
\end{eqnarray*}
This completes the proof.
\end{proof}

\begin{proof}[Proof of Theorem \ref{Serre}]
The constant $M(\mathcal{F})$ in Theorem \ref{Se} can have a simpler
but weaker form as stated in Theorem \ref{Serre}. In fact, by the
following lemma, one sees that (\ref{eq6.1}) in the proof above
becomes
\begin{equation*}
\frac{\overline{\Delta}_H(\mathcal{Q}_i)/H^n(\rank
\mathcal{Q}_i)^2}{H^n\mu_{H}(\mathcal{Q}_i)-[H^n\mu_{H}(\mathcal{Q}_i)
]_{\rank \mathcal{Q}_i}}\leq
\frac{\overline{\Delta}_H(\mathcal{Q}_i)}{H^n}\leq\frac{\overline{\Delta}_H(\mathcal{F})}{H^n}.
\end{equation*}
It follows that $$M(\mathcal{F})\leq\max_{1\leq i\leq
k}\Big\{\frac{\overline{\Delta}_H(\mathcal{G}_i)}{H^n}-\mu_{H}(\mathcal{G}_i),
\sqrt{\frac{2\overline{\Delta}_H(\mathcal{G}_i)}{(H^n)^2\rank
\mathcal{G}_i}}-\mu_{H}(\mathcal{G}_i)\Big\}.$$ This implies the
desired conclusion.
\end{proof}

\begin{lemma}\label{lem}
Let $r\geq1$ and $d$ be two integers. Then $[\frac{d}{r}]_r\leq
\frac{d}{r}-\frac{1}{r^2}$.
\end{lemma}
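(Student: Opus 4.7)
The plan is to unwind the definition of $[\cdot]_r$ and use the basic observation that a positive integer is at least $1$. Concretely, for any pair of integers $a, b$ with $1 \leq b \leq r$ and $\frac{a}{b} < \frac{d}{r}$, I would write
\[
\frac{d}{r} - \frac{a}{b} = \frac{db - ar}{rb}.
\]
The strict inequality $\frac{a}{b} < \frac{d}{r}$ forces the numerator $db - ar$ to be a positive integer, hence $db - ar \geq 1$. Combined with $b \leq r$, this yields
\[
\frac{d}{r} - \frac{a}{b} \;\geq\; \frac{1}{rb} \;\geq\; \frac{1}{r^2}.
\]

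Taking the supremum over all such fractions $\frac{a}{b}$ gives $[d/r]_r \leq \frac{d}{r} - \frac{1}{r^2}$, as desired. The main (trivial) point to check is only that the supremum in the definition is actually attained, so that the strict inequality in the defining condition can be upgraded to the weak inequality in the conclusion; this is immediate because the candidate $\frac{a}{b} = \frac{d-1}{r}$ (taking $b = r$, $a = d-1$) lies in the admissible set, so the set is non-empty and finite once one notes that only finitely many reduced representatives are relevant near the supremum. There is no real obstacle here: the argument is a one-line consequence of the integrality of $db - ar$.
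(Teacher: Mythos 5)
Your proof is correct and is essentially the paper's argument in direct (rather than contradiction) form: both hinge on the observation that $db-ar$ is a positive integer, hence at least $1$, combined with $b\leq r$. Nothing further is needed.
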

\begin{proof}
We argue by contradiction. Assume that there are two integers $a$
and $b$ satisfy $\frac{a}{b}<\frac{d}{r}$, $1\leq b\leq r$ and
$\frac{a}{b}>\frac{d}{r}-\frac{1}{r^2}$. Then one sees that
$$ar>bd-\frac{b}{r}\geq bd-1.$$ This implies $ar\geq bd$. Hence
$\frac{a}{b}\geq\frac{d}{r}$ which is absurd.
\end{proof}

\begin{proof}[Proof of Corollary \ref{corv}]
By Theorem \ref{Serre}, one sees that $H^{n-1}(X,
\mathcal{F}(K_X+lH))=0$, if
$$l>\max\Big\{\frac{\overline{\Delta}_H(\mathcal{F})}{H^n}-\mu_{H}(\mathcal{F}),
\sqrt{\frac{2\overline{\Delta}_H(\mathcal{F})}{(H^n)^2\rank
\mathcal{F}}}-\mu_{H}(\mathcal{F})\Big\}.$$

From $\rank \mathcal{F}\geq2$, it follows
$$\frac{\overline{\Delta}_H(\mathcal{F})}{H^n}-\mu_{H}(\mathcal{F})\geq\sqrt{\frac{2\overline{\Delta}_H(\mathcal{F})}{(H^n)^2\rank
\mathcal{F}}}-\mu_{H}(\mathcal{F}).$$ Thus we are done.
\end{proof}

\section{Chern classes of $\mu_{H}$-stable sheaves on $\mathbb{P}^3$}\label{S7}
In this section we exhibit the applications of Theorem \ref{main1}
to the Chern classes of $\mu_{H,B}$-stable sheaves on
$\mathbb{P}^3$, and prove Theorem \ref{Chern}. From now on, we
assume that $B=0$, $X=\mathbb{P}^3$, $H$ is a plane on
$\mathbb{P}^3$, and $E$ is a $\mu_H$-stable torsion free sheaf on
$\mathbb{P}^3$.

\begin{proof}[Proof of Theorem \ref{Chern}]
If $E$ is $\nu_{\alpha,\beta}$-semistable for any $\alpha>0$, by
Theorem \ref{*}, we have
\begin{equation*}
\frac{\alpha^2}{6}\overline{\Delta}^{\beta
H}_H(E)+\frac{2}{3}\left(\ch^{\beta H}_2(E)\right)^2\geq \ch^{\beta
H}_1(E)\ch^{\beta H}_3(E).
\end{equation*}
By setting $\alpha\rightarrow 0$, one sees
\begin{equation}\label{eq7.1}
\frac{2}{3}\left(\ch^{\beta H}_2(E)\right)^2\geq \ch^{\beta
H}_1(E)\ch^{\beta H}_3(E).
\end{equation}
Substituting
\begin{eqnarray*}
\ch^{\beta H}_1(E)&=&\ch_1(E)-\beta \rank E\\
\ch^{\beta H}_2(E)&=&\ch_2(E)-\beta \ch_1(E)+\frac{\beta^2}{2}\rank
E\\
\ch^{\beta H}_3(E)&=&\ch_3(E)-\beta
\ch_2(E)+\frac{\beta^2}{2}\ch_1(E)-\frac{\beta^3}{6}\rank E
\end{eqnarray*}
into (\ref{eq7.1}), we have
$$4\ch^2_2(E)+\beta^2\Delta(E)-2\beta\ch_1(E)\ch_2(E)\geq6\rank E\big(\mu_{H}(E)-\beta\big)\ch_3(E).$$
This implies
\begin{equation}\label{eq7.2}
\frac{\Delta(E)}{\rank E}\Big(\mu_{H}(E)-\beta
+\frac{\Delta(E)/(\rank E)^2}{\mu_{H}(E)-\beta
}\Big)+6l(E)\geq6\ch_3(E),
\end{equation}
here $l(E)=\frac{c_1^3(E)-3c_1(E)\Delta(E)}{6(\rank E)^2}$.

From Theorem \ref{main1}, one sees that if
$\mu^{\max}_{H}(E)>\mu_{H}(E)-\frac{1}{\rank
E}\sqrt{\frac{\Delta(E)}{\rank E+1}}$, then $E$ is
$\nu_{\alpha,\beta_0}$-stable for any $\alpha>0$ and
$\beta_0=\mu_H(E)-\frac{\Delta(E)/(\rank
E)^2}{\mu_H(E)-\mu^{\max}_H(E)}$. Hence

\begin{equation*}
\frac{\Delta(E)}{\rank
E}\Big(\mu_{H}(E)-\mu^{\max}_{H}(E)+\frac{\Delta(E)/(\rank
E)^2}{\mu_{H}(E)-\mu^{\max}_{H}(E)}\Big)+6l(E)\geq6\ch_3(E).
\end{equation*}
Since $[\mu_{H}(E)]_{\rank E}\geq\mu^{\max}_{H}(E)$, we deduce
\begin{equation*} \frac{\Delta(E)}{6\rank
E}\Big(\mu_{H}(E)-[\mu_{H}(E)]_{\rank E}+\frac{\Delta(E)/(\rank
E)^2}{\mu_{H}(E)-[\mu_{H}(E)]_{\rank E}}\Big)+l(E)\geq\ch_3(E).
\end{equation*}
If $\mu^{\max}_{H}(E)\leq\mu_{H}(E)-\frac{1}{\rank
E}\sqrt{\frac{\Delta(E)}{\rank E+1}}$, by Theorem \ref{main1} and
(\ref{eq7.2}), one has
\begin{equation*}
\frac{(\rank E+2)(\Delta(E))^{\frac{3}{2}}}{(\rank E)^2\sqrt{\rank
E+1}}+\frac{c_1^3(E)-3c_1(E)\Delta(E)}{(\rank E)^2}\geq6\ch_3(E).
\end{equation*}
Thus Theorem \ref{Chern}
follows.
\end{proof}

In particular, when $\rank E=2$, one sees
$[\mu_H(E)]_2=\frac{c_1(E)-1}{2}$. Hence by the formulas
$\ch_2=\frac{1}{2}c_1^2-c_2$ and
$\ch_3=\frac{1}{6}(c_1^3-3c_1c_2+3c_3)$, Theorem \ref{Chern} gives
Corollary \ref{cor}. From $[0]_3=-\frac{1}{3}$,
$[-\frac{1}{3}]_3=-\frac{1}{2}$ and $[-\frac{2}{3}]_3=-1$, we can
also bound $c_3$ for a rank 3 stable sheaf on $\mathbb{P}^3$
(compare it with the bounds got by Ein, Hartshorne and Vogelaar
\cite[Theorem 4.2 and 4.3]{EHV}).

\begin{remark}
In the proof of Theorem \ref{Chern}, if one replaces the plane $H$
by a higher degree divisor $H_d=dH$, then $\beta$ should be changed
into $d\beta$. Thus inequality \ref{eq7.2} becomes
\begin{equation}
\frac{\Delta(E)}{\rank E}\Big(\mu_{H}(E)-\beta d
+\frac{\Delta(E)/(\rank E)^2}{\mu_{H}(E)-\beta d
}\Big)+6l(E)\geq6\ch_3(E).
\end{equation}
On the other hand, one sees that
$$\beta_0=\mu_{H_d}(E)-\frac{\overline{\Delta}_{H_d}(E)/(H_d^3\rank
E)^2}{\mu_{H_d}(E)-\mu^{\max}_{H_d}(E)}=\frac{1}{d}\Big(\mu_{H}(E)-\frac{\Delta(E)/(\rank
E)^2}{\mu_{H}(E)-\mu^{\max}_{H}(E)}\Big),$$ and
$$\beta_1=\mu_{H_d}(E)-\frac{\sqrt{(\rank
E+1)\overline{\Delta}_{H_d}(E)}}{H_d^3\rank
E}=\frac{1}{d}\Big(\mu_{H}(E)-\frac{\sqrt{(\rank
E+1)\Delta(E)}}{\rank E}\Big).$$ Thus we obtain the same
inequalities as those in Theorem \ref{Chern}. That means Theorem
\ref{Chern} does not depend on the degree of $H_d$.
\end{remark}

\begin{proof}[Proof of Corollary \ref{cor1}]
By Lemma \ref{lem}, one deduces
$$\mu_H(E)-\mu^{\max}_{H}(E)\geq\mu_H(E)-[\mu_H(E)]_{\rank
E}\geq\frac{1}{(\rank E)^2}.$$

If $\mu^{\max}_{H}(E)>\mu_{H}(E)-\frac{1}{\rank
E}\sqrt{\frac{\Delta(E)}{\rank E+1}}$, then

$$\mu_H(E)-[\mu_H(E)]_{\rank
E}+\frac{\Delta(E)/(\rank E)^2}{\mu_H(E)-[\mu_H(E)]_{\rank E}}\leq
\frac{1}{(\rank E)^2}+\Delta(E).$$ Hence Theorem \ref{Chern} implies
\begin{equation*} \frac{\Delta(E)}{6\rank
E}\Big(\frac{1}{(\rank E)^2}+\Delta(E)\Big)+l(E)\geq\ch_3(E).
\end{equation*}

Since $\rank E\geq3$, one sees
$$\frac{\Delta(E)}{6\rank E}\Big(\frac{1}{(\rank
E)^2}+\Delta(E)\Big)\geq\frac{(\rank
E+2)(\Delta(E))^{\frac{3}{2}}}{6(\rank E)^2\sqrt{\rank E+1}}.$$ This
completes the proof.
\end{proof}

\bibliographystyle{amsplain}

\end{document}